\newtheorem{thm}{Theorem}[section]
\newtheorem{thma}{Theorem}
\newtheorem{cor}[thm]{Corollary}
\newtheorem{lem}[thm]{Lemma}
\newtheorem{prop}[thm]{Proposition}
\theoremstyle{definition}
\newtheorem{defn}[thm]{Definition}
\newtheorem{defna}[thma]{Definition}
\theoremstyle{remark}
\newtheorem{rem}[thm]{Remark}
\newtheorem{rema}[thma]{Remark}
\numberwithin{equation}{section}
\newcommand*{\cat}[1]{\mathbf{#1}} 
\newcommand*{\mto}{\rightarrow} 
\newcommand*{\cto}{\rightarrowtail} 
\newcommand*{\qto}{\twoheadrightarrow} 
\newcommand*{\wto}{\xrightarrow{\sim}} 
\newcommand*{\pair}[2]{\left\langle #1,#2 \right\rangle} 
\newcommand*{\com}[2]{\left[#1,#2\right]} 
\newcommand*{\set}[1]{\left\{#1\right\}} 
\newcommand*{\del}{\partial} 
\newcommand*{\D}{\mathcal{D}} 
\newcommand*{\hg}{\pi} 
\newcommand*{\comp}{\circ} 
\newcommand*{\Int}{\mathbb{Z}} 
\newcommand*{\FF}{\mathbb{F}} 
\newcommand*{\cmplx}[1]{{#1}^\bullet} 
\newcommand*{\tensor}{\otimes} 
\newcommand*{\isomorph}{\cong} 
\newcommand*{\op}{{o\!p}} 
\newcommand*{\et}{{\acute{e}\!t}} 
\newcommand*{\cont}{{c\!\!\:o\!n\!t}} 
\newcommand*{\algc}[1]{\overline{#1}} 
\newcommand*{\sheaf}[1]{\mathcal{#1}} 
\newcommand*{\oimm}
{\mathrel{\text{$\hookrightarrow\mkern-20mu\circ\mkern+6mu$}}} 
\newcommand{\openideals}{\mathfrak{I}} 
\newcommand{\NS}{\mathfrak{N}} 
\newcommand{\fundG}{\pi_1^{\et}} 
\newcommand{\Sect}{\Gamma} 
\newcommand{\Sectc}{\Gamma\!_c} 
\newcommand{\ncL}{\mathcal{L}} 
\newcommand{\ringtransf}{\Psi} 
\newcommand{\id}{\mathrm{id}} 
\newcommand{\Frob}{\mathfrak{F}} 
\DeclareMathOperator{\Fun}{\cat{Fun}} 
\DeclareMathOperator{\Cyl}{Cyl} 
\DeclareMathOperator{\Cone}{Cone} 
\DeclareMathOperator{\HF}{H} 
\DeclareMathOperator{\Hom}{Hom} 
\DeclareMathOperator{\Jac}{Jac} 
\DeclareMathOperator{\im}{im} 
\DeclareMathOperator{\coker}{coker} 
\DeclareMathOperator{\Spec}{Spec} 
\DeclareMathOperator{\God}{G} 
\DeclareMathOperator{\RDer}{R} 
\DeclareMathOperator{\Gal}{Gal} 
\DeclareMathOperator{\KTh}{K} 
\newcommand*{\lbl}[1]{\save+<-1pc,0pc>*\hbox{$\scriptstyle{#1}\quad$}\restore}
\newcommand*{\lbu}[1]{\save+<0pc,1pc>*\hbox{$\scriptstyle{#1}$}\restore} 
\begin{document}

\title[Noncommutative Iwasawa Main Conjecture]{On a Noncommutative Iwasawa Main Conjecture for Varieties over Finite
Fields}
\author{Malte Witte}%

\address{Malte Witte\newline Ruprecht-Karls-Universit\"at Heidelberg\newline
Mathematisches Institut\newline
Im Neuenheimer Feld 288\newline
D-69120 Heidelberg }%
\email{witte@mathi.uni-heidelberg.de}%

\subjclass{14G10, 14G15, 11R23}

\date{\today}%

\begin{abstract}
We formulate and prove an analogue of the noncommutative Iwasawa main conjecture for $\ell$-adic Lie extensions of a separated
scheme $X$ of finite type over a finite field of characteristic prime to $\ell$.
\end{abstract}

\maketitle
\section{Introduction}

In \cite{CFKSV}, Coates, Fukaya, Kato, Sujatha and Venjakob
formulate a noncommutative Iwasawa main conjecture for $\ell$-adic Lie
extensions of number fields. Other, partly more general versions are formulated in \cite{HK:EBKC+IMC}, \cite{RW:EquivIwaTh2}, and \cite{FK:CNCIT}. Following the approach of \cite{FK:CNCIT}, we formulate and prove below an analogous statement for $\ell$-adic Lie extensions of a separated scheme $X$ of finite type over a finite field $\FF_q$ with $q$ elements, where $\ell$ does not divide $q$.

Assume for the moment that $X$ is geometrically connected and let $G$ be a factor group of the fundamental group of $X$ such that $G\isomorph H\rtimes \Gamma$ where $H$ is a compact $\ell$-adic Lie group and $\Gamma=\Gal(\FF_{q^{\ell^{\infty}}}/\FF_q)\isomorph\Int_{\ell}$.
We write
$$
\Int_{\ell}[[G]]=\varprojlim \Int_{\ell}[G/U]
$$
for the Iwasawa algebra of $G$. Let
$$
S=\{f\in \Int_{\ell}[[G]]\colon \text{$\Int_{\ell}[[G]]/\Int_{\ell}[[G]]f$ is finitely generated as $\Int_{\ell}[[H]]$-module}\}
$$
denote Venjakob's canonical Ore set and write $\Int_{\ell}[[G]]_S$ for the localisation of $\Int_{\ell}[[G]]$ at $S$. We turn $\Int_{\ell}[[G]]$ into a smooth $\Int_{\ell}[[G]]$-sheaf $\sheaf{M}(G)$ on $X$ by letting the fundamental group of $X$ act contragrediently on $\Int_{\ell}[[G]]$.

Recall that there exists an exact localisation sequence of algebraic $\KTh$-groups
$$
\KTh_1(\Int_{\ell}[[G]])\mto\KTh_1(\Int_{\ell}[[G]]_S)\xrightarrow{d}\KTh_0(\Int_{\ell}[[G]],\Int_{\ell}[[G]]_S)\mto 0.
$$
Any endomorphism of perfect complexes of $\Int_{\ell}[[G]]_S$-modules which is a quasi-iso\-morphism gives rise to an element in the group $\KTh_1(\Int_{\ell}[[G]]_S)$. A system of generators for the relative $\KTh$-group $\KTh_0(\Int_{\ell}[[G]],\Int_{\ell}[[G]]_S)$ is given by perfect complexes of $\Int_{\ell}[[G]]$-modules whose cohomology groups are $S$-torsion.

For every continuous $\Int_{\ell}$-representation $\rho$ of $G$, there exists a homomorphism
$$
\rho\colon \KTh_1(\Int_{\ell}[[G]]_S)\mto Q(\Int_{\ell}[[\Gamma]])^{\times}
$$
into the units $Q(\Int_{\ell}[[\Gamma]])^{\times}$ of the field of fractions of $\Int_{\ell}[[\Gamma]]$. It is induced by sending $g\in G$ to $\det ([g]\rho(g)^{-1})$, with $[g]$ denoting the image of $g$ in $\Gamma$. On the other hand,  $\rho$ gives rise to a flat and smooth $\Int_{\ell}$-sheaf $\sheaf{M}(\rho)$ on $X$.

Let $\RDer\Sectc(X,\sheaf{F})$ and $\RDer\Sectc(\algc{X},\sheaf{F})$ be the compact cohomology of a flat constructible $\Int_{\ell}$-sheaf $\sheaf{F}$ on $X$ and on the base change $\algc{X}$ of $X$ to the algebraic closure of $\FF_q$, respectively. Furthermore, let $\Frob_{\FF_q}\in\Gal(\algc{\FF}_q/\FF_q)$ denote the geometric Frobenius. The Grothendieck trace formula
$$
L(\sheaf{F},T)=\prod_{i\in\Int}\det(1-\Frob_{\FF_q} T:\HF_c^i(\algc{X},\sheaf{F}))^{(-1)^{i+1}}
$$
implies that the $L$-function $L(\sheaf{F},T)$  of $\sheaf{F}$ is in fact a rational function.

The following theorem is our analogue of the noncommutative Iwasawa main conjecture in the special situation described above:

\begin{thm}\label{thm:special case NCIMC}\
\begin{enumerate}
\item $\RDer\Sectc(X,\sheaf{M}(G)\tensor_{\Int_\ell}\sheaf{F})$ is a perfect complex of $\Int_{\ell}[[G]]$-modules whose cohomology groups are $S$-torsion. Moreover, the endomorphism $\id-\Frob_{\FF_q}$ of the complex $\RDer\Sectc(\algc{X},\sheaf{M}(G)_S\tensor_{\Int_\ell}\sheaf{F})$ is a quasi-iso\-morphism and hence, it gives rise to an element
    $$\ncL_G(X/\FF_q,\sheaf{F})=[\id-\Frob_{\FF_q}]^{-1}\in \KTh_1(\Int_{\ell}[[G]]_S).$$
\item $d\ncL_G(X/\FF_q,\sheaf{F})=[\RDer\Sectc(X,\sheaf{M}(G)\tensor_{\Int_\ell}\sheaf{F})]^{-1}$.
\item Assume that $\rho$ is a continuous $\Int_{\ell}$-representation of $G$. Then $$\rho(\ncL_G(X/\FF_q,\sheaf{F}))=L(\sheaf{M}(\rho)\tensor_{\Int_{\ell}}\sheaf{F},[\Frob_{\FF_q}]^{-1})$$
    in $Q(\Int_{\ell}[[\Gamma]])^{\times}$.
\end{enumerate}
\end{thm}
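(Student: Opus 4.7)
The plan is to handle the three parts in order, with part~(3) ultimately reducing to the Grothendieck trace formula recalled in the introduction. The unifying principle is that compact cohomology of the ``big'' smooth sheaf $\sheaf{M}(G)$ encodes the Iwasawa theory of the tower of $G$-covers of $X$, while the $\Gamma$-direction in the decomposition $G=H\rtimes\Gamma$ is realised geometrically as the action of geometric Frobenius on cohomology of $\algc{X}$.

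For part~(1), I would first verify that $\RDer\Sectc(\algc{X},\sheaf{M}(G)\tensor_{\Int_\ell}\sheaf{F})$ is a perfect complex of $\Int_\ell[[H]]$-modules, by writing it as the derived limit of $\RDer\Sectc(\algc{X},\sheaf{M}(G/U)\tensor_{\Int_\ell}\sheaf{F})$ over open normal subgroups $U\subset H$ of $G$ and invoking the Artin--Grothendieck finiteness theorem together with a standard criterion for perfectness over the Iwasawa algebra. The Hochschild--Serre distinguished triangle for the pro-cyclic cover $\algc{X}\to X$,
$$
\RDer\Sectc(\algc{X},\sheaf{M}(G)\tensor_{\Int_\ell}\sheaf{F})\xrightarrow{\id-\Frob_{\FF_q}}\RDer\Sectc(\algc{X},\sheaf{M}(G)\tensor_{\Int_\ell}\sheaf{F})\to\RDer\Sectc(X,\sheaf{M}(G)\tensor_{\Int_\ell}\sheaf{F}),
$$
then shows that the right-hand term is perfect over $\Int_\ell[[G]]$ with cohomology finitely generated over $\Int_\ell[[H]]$, hence $S$-torsion by Venjakob's criterion. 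Inverting $S$ kills the right-hand term, so $\id-\Frob_{\FF_q}$ becomes a quasi-isomorphism and yields $\ncL_G(X/\FF_q,\sheaf{F})\in\KTh_1(\Int_\ell[[G]]_S)$.

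Part~(2) is then formal: by the definition of the boundary map $d$ in the localisation sequence, $d$ sends the class of any endomorphism that becomes a quasi-isomorphism after inverting $S$ to the $\KTh_0$-class of its mapping cone, which by the triangle above is precisely $\RDer\Sectc(X,\sheaf{M}(G)\tensor_{\Int_\ell}\sheaf{F})$. For part~(3), I would reinterpret the map $\rho$ on $\KTh_1(\Int_\ell[[G]]_S)$ as being induced by a bimodule: take $M=Q(\Int_\ell[[\Gamma]])^n$ with its natural left $Q(\Int_\ell[[\Gamma]])$-structure and right $\Int_\ell[[G]]$-action via $g\mapsto [g]\rho(g)^{-1}$. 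Then for any perfect $C^\bullet$ over $\Int_\ell[[G]]_S$ with quasi-isomorphism $\phi$, the value $\rho([\phi])$ is the alternating determinant of the endomorphism of $M\tensor_{\Int_\ell[[G]]_S}^{\mathbb{L}}C^\bullet$ induced by $\phi$. A projection-formula computation identifies $M\tensor_{\Int_\ell[[G]]}^{\mathbb{L}}\sheaf{M}(G)$ with $Q(\Int_\ell[[\Gamma]])\tensor_{\Int_\ell}\sheaf{M}(\rho)$ as a smooth $Q(\Int_\ell[[\Gamma]])$-sheaf on $X$, and hence identifies the relevant tensor product of complexes with $Q(\Int_\ell[[\Gamma]])\tensor_{\Int_\ell}\RDer\Sectc(\algc{X},\sheaf{M}(\rho)\tensor_{\Int_\ell}\sheaf{F})$, on which $\id-\Frob_{\FF_q}$ corresponds to $\id-[\Frob_{\FF_q}]\cdot\Frob_{\FF_q}$ (coefficient element times geometric Frobenius). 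Taking alternating determinants on cohomology and comparing with the trace formula evaluated at $T=[\Frob_{\FF_q}]^{-1}$ gives the claimed identity.

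The principal obstacle is the bookkeeping in part~(3): the contragredient convention used in defining $\sheaf{M}(G)$, the bimodule formalism through which $\rho$ acts on $\KTh$-theory, and the twin appearances of Frobenius -- as the geometric Frobenius on the cohomology of $\algc{X}$ and as the topological generator $[\Frob_{\FF_q}]$ of $\Gamma$ -- must all be aligned without sign or inversion errors. Establishing the projection-formula identification as a natural isomorphism compatible with all the relevant actions is where I would expect the bulk of the technical work to lie; once these identifications are pinned down, the remainder is essentially classical Grothendieck trace formalism.
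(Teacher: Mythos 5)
The decisive step of part~(1) in your proposal is the claim that $\RDer\Sectc(\algc{X},\sheaf{M}(G)\tensor_{\Int_\ell}\sheaf{F})$ is already perfect as a complex of $\Int_{\ell}[[H]]$-modules; the $S$-torsion property of $\RDer\Sectc(X,\sheaf{M}(G)\tensor_{\Int_\ell}\sheaf{F})$ is then deduced by taking the cone of $\id-\Frob_{\FF_q}$. That intermediate claim is false. Take $G=\Gamma$, $H=1$, $X=\Spec\FF_q$, $\sheaf{F}=\Int_\ell$: the cyclotomic covering trivialises over $\algc{\FF}_q$, so $\RDer\Sectc(\algc{X},\sheaf{M}(\Gamma))\isomorph\Int_\ell[[\Gamma]]$ concentrated in degree $0$, which is not perfect over $\Int_\ell[[H]]=\Int_\ell$. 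In general the geometric cohomology is an \emph{induced} module, perfect over $\Int_\ell[[G]]$ but of infinite rank over $\Int_\ell[[H]]$. (Relatedly, your indexing set is empty: since $G/H\isomorph\Gamma$ is infinite, $G$ has no open subgroups contained in $H$, so $\sheaf{M}(G)$ cannot be written as a limit of constructible sheaves $\sheaf{M}(G/U)$ with $U\subset H$ open in $G$, and the Artin--Grothendieck finiteness theorem never applies in the form you invoke.) Finiteness over $\Int_\ell[[H]]$ emerges only \emph{after} descending to $X$, i.e.\ after the cone of $\id-\Frob_{\FF_q}$ has collapsed the extra $\Gamma$-direction; proving this is the actual content of the theorem. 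The paper's route is to reduce to $H$ finite, pass to the quotient $Y_{\Gamma'}$ for a central open $\Gamma'\subset\Gamma$ so that the covering becomes cyclotomic, and then compute
$$
\HF_c^i(X,f_!f^*\sheaf{F})\isomorph\varprojlim_{n}\HF_c^{i-1}(\algc{X},\sheaf{F})/(\id-\Frob_{\FF_q}^{\ell^n})\HF_c^{i-1}(\algc{X},\sheaf{F}),
$$
the key point being the vanishing of $\varprojlim_n\ker(\id-\Frob_{\FF_q}^{\ell^n})$. None of this is supplied by your argument, so part~(1) has a genuine gap, and parts~(2) and~(3) rest on it.

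The remaining parts are otherwise in line with the paper. Part~(2) does follow from the description of the connecting homomorphism as the class of the mapping cone, though the paper has to prove this description (it occupies the appendix) rather than take it as a definition. Part~(3) is carried out essentially as you propose, by pushing forward along the bimodule attached to $\rho$ and invoking the trace formula; note only that the contragredient convention makes $\Frob_{\FF_q}$ act on $\Int_\ell[[\Gamma]]\tensor_{\Int_\ell}\RDer\Sectc(\algc{X},\cdot)$ as $\gamma^{-1}\tensor\Frob_{\FF_q}$ with $\gamma=[\Frob_{\FF_q}]$, so the twisting element is $[\Frob_{\FF_q}]^{-1}$ rather than $[\Frob_{\FF_q}]$ as written, consistent with evaluating $L$ at $T=[\Frob_{\FF_q}]^{-1}$.
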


Theorem~\ref{thm:special case NCIMC}.(3) implies the following interpolation property of $\ncL_G(X/\FF_q,\sheaf{F})$ with respect to special values of $L$-functions: Let $\epsilon$ denote the cyclotomic character and decompose $\epsilon=\epsilon_f\times\epsilon_{\infty}$ according to the decomposition $\Gal(\FF_q(\zeta_{\ell^{\infty}})/\FF_q)=\Delta\times\Gamma$. Then for every $n\in\Int$, the leading term of the image of $\epsilon_{\infty}^n\rho(\ncL_G(X/\FF_q,\sheaf{F}))$ under the isomorphism
$$
 Q(\Int_{\ell}[[\Gamma]])\mto Q(\Int_{\ell}[[X]]),\qquad [\Frob_{\FF_q}]^{-1}\mapsto X+1
$$
agrees with the leading term of $L(\sheaf{M}(\epsilon_f^{-n}\rho)\tensor_{\Int_{\ell}}\sheaf{F},q^{-n}T)$ at $T=1$.

In Section~\ref{sec:IMC} of this article we will prove a version of the noncommutative Iwasawa main conjecture that is in several aspects more general than Theorem~\ref{thm:special case NCIMC}:

The above theorem is limited to geometrically connected schemes $X$. We overcome this limitation by allowing $G$ to be the covering group of any suitable principal covering of $X$. Moreover, we will only require $G$ to be a virtual pro-$\ell$-group, which is a slightly weaker condition than being an $\ell$-adic Lie group.

The ring $\Int_{\ell}$ will be replaced by more general rings of scalars. A good class of rings to work with is the class of adic $\Int_{\ell}$-algebras, which is also used in \cite{FK:CNCIT}. It contains all finite extensions of $\Int_{\ell}$ and is closed under forming profinite group rings with respect to virtual pro-$\ell$-groups. Furthermore, it is more natural to state and prove the conjecture not only for flat and constructible sheaves, but to extend it to perfect complexes.

Beyond proving the main conjecture, we will study the transformation properties of $\ncL_G(X/\FF_q,\sheaf{F})$ under scalar extensions and changes of the principal covering of $X$. Note that parts $(1)$ and $(2)$ of the above theorem and its generalisation remain true in the case $\ell\mid q$, but our element $\ncL_G(X/\FF_q,\sheaf{F})$ does not satisfy $(3)$.

The $\KTh$-theoretical formulation of the conjecture will be based on Waldhausen's construction of higher $\KTh$-groups \cite{Wal:AlgKTheo} and on the construction of the $1$-type of the associated topological spectrum given by F.~Muro and A.~Tonks \cite{MT:1TWKTS}. The localisation sequence may then be viewed as a consequence of Waldhausen's localisation theorem. In the appendix, we derive an explicit description of its rightmost connecting homomorphism. This allows us to obtain a particularly simple description of the elements of $\KTh$-groups appearing in the conjecture.

In order to use Waldhausen's formalism, we need to introduce suitable Waldhausen categories replacing the usual triangulated categories of perfect complexes of adic sheaves. We also need to define Waldhausen exact functors calculating the classical derived functors such as higher derived images with proper support and derived tensor products. For this, we use the approach developed in \cite{Witte:PhD}. All necessary transformation properties for elements in $\KTh$-groups can then be read off directly from the underlying transformation properties of the Waldhausen exact functors.

After these preparations, the proof of part $(1)$ of the above theorem can be reduced to the case that $G\isomorph H\times \Int_{\ell}$ with $H$ finite. In this situation the Hochschild-Serre spectral sequence implies that
$$
H_c^i(X,\sheaf{M}(G)\tensor_{\Int_{\ell}}\sheaf{F})\isomorph \varprojlim_{n} H_c^{i-1}(\algc{Y},\sheaf{F})/(1-\Frob_{\FF_q}^{\ell^n}) H_c^{i-1}(\algc{Y},\sheaf{F}),
$$
where $Y$ is the Galois covering of $X$ corresponding to $H$. Since $H_c^{i-1}(\algc{Y},\sheaf{F})$ is finitely generated as $\Int_{\ell}$-module, it follows that $H_c^i(X,\sheaf{M}(G)\tensor_{\Int_{\ell}}\sheaf{F})$ is $S$-torsion.
Part $(2)$ is a formal consequence of part $(1)$ and our description of the boundary homomorphism in the localisation sequence. Finally, part $(3)$ may be reduced to the classical Grothendieck trace formula and the fact that the evaluation homomorphism
$$
\Int_{\ell}[T]\mto \Int_{\ell}[[\Gamma]],\qquad T\mapsto[\Frob_{\FF_q}]^{-1},
$$
maps $1+T\Int_{\ell}[T]$ to $S$.

The article is structured as follows. In Section~\ref{sec:principal coverings} we recall the necessary terminology of principal coverings. Section~\ref{sec:adic rings} contains a brief account on adic rings. Furthermore, we give a convenient construction of a Waldhausen category calculating their $\KTh$-theory. In Section~\ref{sec:localisation} we introduce the Waldhausen categories used to calculate $\KTh_1(\Int_{\ell}[[G]]_S)$ and $\KTh_0(\Int_{\ell}[[G]],\Int_{\ell}[[G]]_S)$ in the localisation sequence. Our construction of the Waldhausen category of perfect complexes of adic sheaves is recalled in Section~\ref{sec:perf complexes}. In Section~\ref{sec:adic sheaves induced by coverings} we define an analogue of $\sheaf{M}(G)$ for arbitrary principal coverings and study its transformation properties. Section~\ref{sec:the cyclotomic covering} treats the special case $G=\Gamma$. The precise formulations and the proofs of our main results are given in Section~\ref{sec:IMC}. In the appendix we derive an explicit description of the rightmost connecting homomorphism of Waldhausen's localisation sequence under the same assumptions under which the localisation sequence is known to exist.

Using the results of this article and of \cite{EmertonKisin}, D.~Burns \cite{Burns:MCinGIwTh+RelConj} has recently constructed in the case $\ell\mid q$ a modification of $\ncL_G(X/\FF_q,\sheaf{F})$ which has the right interpolation property. The results of this article together with the descent formalism developed by Venjakob and Burns in \cite{BV:DescentTheory} can also be used to generalise the proof of an analogue of the equivariant Tamagawa number conjecture given in \cite{Bu:VEZFCFF}. An analogue of the noncommutative Iwasawa main conjecture for elliptic curves over function fields in the case that $\ell$ is equal to the characteristic~$p$ of the field in question has been considered in \cite{OchiaiTrihan:OnTheSelmer}. F.~Trihan and D.~Vauclair have announced proofs of more general main conjectures in this case. We also point out that tremendous progress towards a proof of the noncommutative Iwasawa main conjecture for totally real fields has been achieved in \cite{Kato:HeisenbergType}, \cite{Hara}, \cite{Kakde1}, \cite{Kakde2}, and in \cite{RW:MainConjecture}.

The author wants to thank A.~Schmidt and O.~Venjakob for valuable discussions and advice.

\section{Principal Coverings}\label{sec:principal coverings}

In this section, we recall the concept of principal coverings from \cite[Def.~2.8]{SGA1}. More precisely, we shall consider pro-objects over the category of finite principal coverings defined in \emph{loc.\ cit.}

If $A$ is either a commutative ring or a scheme, we let
$\cat{Sch}_A$ denote the category of schemes of finite type over
$A$. If $G$ is any profinite group, we
write $\NS_G$ for the set of open normal subgroups of
$G$, partially ordered by inclusion.

\begin{defn}
Let $G$ be a profinite group and $X$ a locally noetherian scheme.
A \emph{principal covering} $(f\colon Y\mto X, G)$ of $X$ with
\emph{Galois group} $G$ is an inverse system of $X$-schemes
$$
(f_U\colon
Y_U\mto X)_{U\in\NS_G},
$$
together with a right operation of $G$ on the system such that for any
$U\in\NS_G$,
\begin{enumerate}
\item $f_U$ is finite, \'etale, and surjective,

\item the operation of $U$ on the scheme $Y_U$ is trivial,

\item the natural morphism
$$
\bigsqcup_{\sigma\in G/U}Y_U\xrightarrow{\bigsqcup \id_{Y_U}\times
\sigma} Y_U\times_X Y_U
$$
is an isomorphism.
\end{enumerate}
\end{defn}

For any profinite group $G$ and a locally noetherian scheme $X$,
there is always the trivial principal covering $(X\times G\mto
X,G)$ given by
$$
(X\times G)_U=\bigsqcup_{\sigma\in G/U} X
$$
for any open normal subgroup $U$ of $G$. If $X$ is connected and
$x$ is a geometric point of $X$, then there exists a distinguished principal
covering $(f\colon \widetilde{X}\mto X, \fundG(X,x))$ whose Galois group is
the \'etale fundamental group $\fundG(X,x)$ of $X$. It is characterised by the property that any compatible system of geometric points $(x\mto \widetilde{X}_U)_{U\in\NS_{\fundG(X,x)}}$ over the base point induces an isomorphism
$$
\varinjlim_{U\in \NS_{\fundG(X,x)}}\Hom_X(\widetilde{X}_U,\cdot)\mto \Hom_X(x,\cdot)
$$
of functors from the category of finite \'etale $X$-scheme to sets. Moreover, the schemes $\widetilde{X}_U$ are connected.

If $(f\colon Y\mto X,G)$ is a principal covering and $X'\mto X$
is a locally noetherian $X$-scheme, then we write
$$
(f\times_X X'\colon Y\times_X X'\mto X',G)
$$
for the principal covering
of $X'$ obtained by base change, i.\,e.\ $(Y\times_X X')_U=Y_U\times_X X'$.

If $V$ is an open (not necessarily normal) subgroup of $G$ and $U\subset V$
is an open normal subgroup of $G$ then the quotient
scheme
$$
Y_V:=Y_U/(V/U)
$$
exists and is (up to canonical isomorphism) independent of the choice of $U$. Moreover, we
obtain a principal covering
$$
(f^V\colon Y\mto Y_V,V)
$$
of $Y_V$ given by the inverse system $(f^V_U\colon Y_U\mto Y_V)_{U\in\NS_V}$, where $f^V_U$ denotes the canonical projection map.

If $H$ is a closed normal subgroup of $G$ and $\alpha\colon G\mto G/H$ the natural projection, we define
$$
(f_H\colon Y_H\mto X,G/H)
$$
to be the principal covering given by the inverse system
$$
(f_{\alpha^{-1}(U)}\colon Y_{\alpha^{-1}(U)}\mto X)_{U\in \NS_{G/H}}.
$$

\begin{defn}
A \emph{morphism}
$$
a\colon\quad(Y\mto X,G)\quad\mto\quad (Y'\mto X,G')
$$
of principal coverings of $X$ is a continuous group homomorphism $\alpha\colon G\mto G'$ together
with a $G$-equivariant morphism of inverse systems
$$
a\colon\quad(Y_{\alpha^{-1}(U)}\mto X)_{U\in\NS_{G'}}\quad\mto\quad(Y'_U\mto X)_{U\in\NS_{G'}}.
$$
\end{defn}

\begin{lem}\label{lem:ismorphisms of G-coverings}
A morphism
$$
a\colon\quad(Y\mto X,G)\quad\mto\quad (Y'\mto X,G')
$$
of principal coverings of $X$ is an
isomorphism if and only if the associated homomorphism of groups
$\alpha\colon G\mto G'$ is an isomorphism.
\end{lem}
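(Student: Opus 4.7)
The ``only if'' direction is essentially tautological: if $a$ is an isomorphism of principal coverings with inverse $b$, then the group homomorphism $\beta$ associated to $b$ must satisfy $\alpha\circ\beta=\id_{G'}$ and $\beta\circ\alpha=\id_G$, because taking the underlying group homomorphism of a morphism of principal coverings is functorial and sends the identity morphism of $F$ to $\id_G$. So I would dispose of this direction in one sentence.

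For the ``if'' direction, assume $\alpha\colon G\to G'$ is an isomorphism. I need to verify that the functorial transformation $a\colon \alpha_*F\mto F'$ is a functorial isomorphism, that is, for every $U'\in\cat{NS}(G')$ the induced $X$-morphism
$$
a_{U'}\colon Y_{\alpha^{-1}(U')}\mto Y'_{U'}
$$
is an isomorphism of schemes. Fix such a $U'$ and set $U=\alpha^{-1}(U')$, which lies in $\cat{NS}(G)$ since $\alpha$ is continuous. Under $\alpha$ we obtain an identification $G/U\isomorph G'/U'$ of finite groups, and with respect to this identification $a_{U'}$ is equivariant for the natural right actions of $G'/U'$ on source and target, both of which are principal coverings with Galois group $G'/U'$ in view of the torsor condition (3) in the definition of a principal covering.

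The core step is the observation that any $H$-equivariant morphism between two principal $H$-coverings of $X$ with finite $H$ is automatically an isomorphism. I would prove this by descent: condition (3) means that $Y_U$ and $Y'_{U'}$ are $H$-torsors over $X$ in the \'etale topology, so after an \'etale base change $X'\mto X$ both become trivial, say $Y_U\times_X X'\isomorph X'\times H\isomorph Y'_{U'}\times_X X'$ as $H$-torsors. The pulled-back equivariant morphism then takes the form $(x,h)\mapsto (x,\phi(x)h)$ for some locally constant map $\phi\colon X'\mto H$ and admits the obvious inverse $(x,h)\mapsto (x,\phi(x)^{-1}h)$. Hence the pullback is an isomorphism, and since being an isomorphism is \'etale-local on the base, $a_{U'}$ itself is an isomorphism.

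I don't expect any genuine obstacle here; the only point that requires a moment's care is bookkeeping the identifications $G/U\isomorph G'/U'$ through $\alpha$ so that one really is comparing two torsors for the \emph{same} finite group, after which the torsor argument is formal.
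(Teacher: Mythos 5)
Your proof is correct, and the overall skeleton matches the paper's: both arguments reduce to showing that a $G$-equivariant morphism between finite principal $G$-coverings of $X$ is an isomorphism (your bookkeeping of the identification $G/U\isomorph G'/U'$ through $\alpha$ corresponds to the paper's opening reduction to $G=G'$, $\alpha=\id$). Where you diverge is in how that finite-level statement is proved. The paper localises further, to $X=\Spec A$ with $A$ local, writes $Y_U=\Spec B$, $Y'_{U'}=\Spec B'$, observes via condition (3) that $B$ and $B'$ are both free of rank $\abs{G/U}$ over $A$, and concludes that the finite \'etale morphism $a_{U'}$ has degree $1$, hence is an isomorphism --- a pure rank count that never writes down an inverse. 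You instead trivialise both torsors after a finite \'etale base change (e.g.\ $X'=Y_U\times_X Y'_{U'}$ works, and is worth naming), exhibit the pulled-back map explicitly as $(x,h)\mapsto(x,\phi(x)h)$ with an explicit inverse, and descend. Your route uses slightly heavier machinery (descent of the property of being an isomorphism along an fppf cover, plus the classification of equivariant maps of trivial torsors) but is arguably more conceptual and does not need $G$-equivariance and the degree count to interact: the paper's argument quietly uses equivariance only to ensure $a_{U'}$ is a morphism over $X$ of the right degree, whereas yours uses it to pin down the map completely. Both are standard and complete; neither has a gap.
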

\begin{proof}
We may assume that $G=G'$ and that $\alpha$ is the identity.
We may then reduce to the case that $G$ is finite and that $X$
is the spectrum of a local ring $A$. Then $Y$ and $Y'$ are the spectra of finite flat $A$-algebras $B$ and $B'$, respectively. The rank of both $B$ and $B'$ as free $A$-modules is equal to the cardinality of $G$. Since $a\colon Y\mto Y'$ is finite \'etale, it follows that $B$ is a finitely generated, projective $B'$-module of constant rank $1$. Hence, $B\isomorph B'$.
\end{proof}

In the following, we will impose further restrictions on the group $G$.

\begin{defn}\label{defn:l-admissible}
Let $\ell$ be a prime. We call a profinite group $G$ a
\emph{virtual pro-$\ell$-group} if its $\ell$-Sylow subgroups are of finite index. Without further comment, we require all virtual pro-$\ell$-groups appearing in this article to be topologically finitely generated.

A principal covering is called a \emph{virtual pro-$\ell$-covering} if its
Galois group is a virtual pro-$\ell$-group.
\end{defn}

Note that all compact $\ell$-adic Lie groups are virtual pro-$\ell$-groups in the above sense. The following example of a virtual pro-$\ell$-covering will play an important role: Let $\FF_q$ be a finite field with $q$ elements. We fix an algebraic closure $\algc{\FF}_q$ of $\FF_q$.  Let $\ell$ be any prime, let $k$ be an integer prime to $\ell$, and set
$$
\FF_{q^{k\ell^{\infty}}}=\bigcup_{n\geq 0} \FF_{q^{k\ell^n}}
$$
(as subfield of $\algc{\FF}_q$). This gives rise to a principal covering
$$
(\Spec \FF_{q^{k\ell^{\infty}}}\mto
\Spec \FF_q,\Gamma_{k\ell^{\infty}})
$$
with Galois group $\Gamma_{k\ell^{\infty}}\isomorph\Int/k\Int\times\Int_{\ell}$.

\begin{defn}
Let $X$ be a scheme of finite type over the finite field $\FF_q$ and set $X_{k\ell^{\infty}}=X\times_{\Spec \FF_q}\Spec \FF_{q^{k\ell^{\infty}}}$. The
principal covering
$$
(X_{k\ell^{\infty}} \mto X,\Gamma_{k\ell^{\infty}})
$$
will be called the \emph{cyclotomic $\Gamma_{k\ell^{\infty}}$-covering} of
$X$.
\end{defn}

We point out that with this definition, $X_{k\ell^{\infty}}$ is not necessarily connected, even if $X$ itself is connected.

\begin{defn}
Let $X$ be a scheme of finite type over a finite field $\FF$, $\ell$ an arbitrary prime number. We call a
principal covering $(f\colon Y\mto X,G)$ \emph{admissible} if
\begin{enumerate}
\item $G\isomorph H\rtimes\Gamma_{\ell^{\infty}}$ is the semidirect product of a closed
normal virtual pro-$\ell$-subgroup $H$ and the group
$\Gamma_{\ell^{\infty}}$,

\item $(f_H\colon Y_H\mto X,\Gamma_{\ell^{\infty}})$ is isomorphic to the cyclotomic
$\Gamma_{\ell^{\infty}}$-covering of $X$.
\end{enumerate}
\end{defn}

Note that the semidirect product $H\rtimes\Gamma_{\ell^{\infty}}$ of a virtual
pro-$\ell$-group $H$ and $\Gamma_{\ell^{\infty}}\isomorph\Int_{\ell}$ is
itself a virtual pro-$\ell$-group.

\section{The \texorpdfstring{$\KTh$}{K}-Theory of Adic Rings}\label{sec:adic rings}

In this section, we recall some facts about adic rings and their $\KTh$-theory. Properties of these rings have previously been studied in \cite{Warner:TopRings} and in \cite{FK:CNCIT}. We refer to \cite[Section~5.1-2]{Witte:PhD} for a more complete treatment.

All rings will be associative with unity, but not necessarily
commutative. For any ring $R$, we let
$$
\Jac(R)=\set{x\in R\;|\;\text{$1-rx$ is invertible for any $r\in R$}}
$$
denote the \emph{Jacobson radical} of $R$. The ring $R$
is called \emph{semilocal} if $R/\Jac(R)$ is artinian.

\begin{defn}
A ring $\Lambda$ is called an \emph{adic ring} if for each integer
$n\geq 1$, the ideal $\Jac(\Lambda)^n$ is of finite index in
$\Lambda$ and
$$
\Lambda=\varprojlim_{n}\Lambda/\Jac(\Lambda)^n.
$$
\end{defn}

Note that $\Lambda$ is adic precisely if it is compact, semilocal
and the Jacobson radical is finitely generated \cite[Theorem~36.39]{Warner:TopRings}.

For any adic ring $\Lambda$ we denote by $\openideals_{\Lambda}$ the set of open two-sided ideals of $\Lambda$, partially ordered by inclusion.

\begin{prop}\label{prop:l-admissible group rings are adic}
Let $\Lambda$ be an adic $\Int_{\ell}$-algebra and let $G$ be a virtual pro-$\ell$-group. Then the profinite group ring
$$
\Lambda[[G]]=\varprojlim_{J\in\openideals_{\Lambda}}
\varprojlim_{U\in\NS_{G}}\Lambda/J[G/U]
$$
is an adic $\Int_{\ell}$-algebra. Moreover, if $U$ is any open
normal pro-$\ell$-subgroup of $G$, then the kernel of
$$
\Lambda[[G]]\mto \Lambda/\Jac(\Lambda)[G/U]
$$
is contained in $\Jac(\Lambda[[G]])$.
\end{prop}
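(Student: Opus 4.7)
The plan is first to establish the moreover clause and then to deduce the adic property via the characterization recalled before the statement. Compactness of $\Lambda[[G]]$ is immediate from its presentation as an inverse limit of finite rings, so only semilocality and finite generation of $\Jac(\Lambda[[G]])$ need to be verified, and both will follow easily from the moreover clause.

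For the moreover clause, fix an open normal pro-$\ell$ subgroup $U$ of $G$; such a $U$ exists since $G$ is virtually pro-$\ell$, by taking the normal core of an $\ell$-Sylow subgroup. Let $I$ denote the kernel of $\Lambda[[G]] \to \Lambda/\Jac(\Lambda)[G/U]$. The strategy is to show that $I$ is topologically nilpotent as a two-sided ideal, i.e. $I^n \to 0$ in the profinite topology. This suffices, because for any $x \in I$ and $r \in \Lambda[[G]]$ one has $(rx)^n \in I^n$, so $\sum_{n \geq 0}(rx)^n$ converges to a two-sided inverse of $1 - rx$, placing $I$ inside $\Jac(\Lambda[[G]])$. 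To verify topological nilpotence, it is enough to work modulo the cofinal system of open ideals $J \cdot \Lambda[[G]] + \omega_V$, where $J \in \openideals_{\Lambda}$ is contained in $\Jac(\Lambda)$, $V$ is an open normal subgroup of $G$ contained in $U$, and $\omega_V$ is the closed two-sided ideal generated by $\{v - 1 : v \in V\}$. The corresponding quotient $\Lambda/J[G/V]$ is finite, and the image of $I$ there is generated by the nilpotent ideal $\Jac(\Lambda)/J$ together with the augmentation ideal of the finite $\ell$-group $U/V$ inside the group ring of $G/V$. Modulo the first piece one lands in a group ring $R[G/V]$ over a finite quotient $R$ of $\Lambda/\Jac(\Lambda)$; continuity of $\Int_\ell \to \Lambda$ forces $\ell^n \to 0$ in $\Lambda$ and hence $\ell \in \Jac(\Lambda)$ (using semisimplicity of $\Lambda/\Jac(\Lambda)$), so $R$ is a finite $\FF_\ell$-algebra, and the augmentation ideal of a finite $\ell$-group over such a coefficient ring is nilpotent. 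Combining, the image of $I$ is nilpotent in $\Lambda/J[G/V]$, so $I^n \to 0$.

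With the moreover clause established, $\Lambda[[G]]/\Jac(\Lambda[[G]])$ is a quotient of the finite ring $\Lambda/\Jac(\Lambda)[G/U]$, so $\Lambda[[G]]$ is semilocal. The ideal $I$ is finitely generated as a closed two-sided ideal (by finitely many generators of $\Jac(\Lambda)$ together with $u_i - 1$ for $u_i$ in a finite topological generating set of $U$, which exists because $G$ is topologically finitely generated), and $\Jac(\Lambda[[G]])/I$ is a finite ideal in the finite ring $\Lambda[[G]]/I$; together these give finite generation of $\Jac(\Lambda[[G]])$. The characterization of adic rings recalled before the statement then yields that $\Lambda[[G]]$ is adic. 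The main obstacle is the topological nilpotence argument: it hinges on $\ell \in \Jac(\Lambda)$, which is what makes the $\ell$-group augmentation ideal nilpotent in the relevant finite quotients, and on correctly handling the interaction of $\Jac(\Lambda)$ with the augmentation part of $I$ in the noncommutative setting.
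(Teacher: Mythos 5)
Your proof is correct, and it reaches the conclusion by a somewhat different route than the paper. The paper first computes $\Jac(\Lambda[[G]])$ as the inverse limit of the Jacobson radicals of the finite quotients $\Lambda/\Jac(\Lambda)^n[G/V]$, then reduces (via nilpotence of $\Jac(\Lambda)\Lambda[G]$, a splitting into matrix factors, and the Morita identity $\Jac(M_{k,k}(R))=M_{k,k}(\Jac(R))$) to the case of a finite field of characteristic $\ell$, where it cites the classical result of Curtis--Reiner. You instead prove directly that the kernel $I$ is topologically nilpotent and invoke the geometric series to place it in the radical; the nilpotence of the image of $I$ in each finite quotient $\Lambda/J[G/V]$ is obtained by summing the nilpotent ideal coming from $\Jac(\Lambda)$ with the two-sided ideal generated by the augmentation ideal of the normal $\ell$-group $U/V$, whose nilpotence over a finite $\FF_\ell$-algebra you establish via $\ell\in\Jac(\Lambda)$. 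The two arguments rest on the same core fact (nilpotence of the augmentation ideal of a finite $\ell$-group in characteristic $\ell$), but your version bypasses both the formula $\Jac(\varprojlim)=\varprojlim\Jac$ and the Morita reduction, at the cost of having to argue convergence of the geometric series; it is arguably more self-contained. Two points you treat tersely but which do hold: the two-sided ideal of $R[G/V]$ generated by the augmentation ideal of $R[U/V]$ equals $\omega\cdot R[G/V]$ only because $U/V$ is normal in $G/V$ (so conjugation preserves $\omega$), and this is what makes its powers equal to $\omega^n R[G/V]$ and hence zero; and the cofinality of the ideals $J\Lambda[[G]]+\omega_V$ among open two-sided ideals is what lets finite-quotient nilpotence assemble into topological nilpotence. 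The concluding deduction of semilocality and finite generation of the radical is the same as in the paper.
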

\begin{proof}
We begin by proving the assertion about the Jacobson radical. Clearly, $\Lambda[[G]]$ is a compact ring. Hence,
$$
\Jac(\Lambda[[G]])=\varprojlim_{V\in\NS_{G},n\geq 0} \Jac(\Lambda/\Jac(\Lambda)^n[G/V]).
$$
We may thus assume that
$\Lambda$ and $G$ are finite. A direct calculation shows that the
twosided ideal $\Jac(\Lambda)\Lambda[G]$ is nilpotent and therefore, it is contained in the Jacobson radical of $\Lambda[G]$. Consequently, we
may assume that $n=1$, i.\,e.\ $\Lambda$ is a finite product of
full matrix rings over finite fields of characteristic $\ell$.
Considering each factor of $\Lambda$ separately and using that
$$
\Jac(M_{k,k}(\Lambda)[G])=\Jac(M_{k,k}(\Lambda[G]))=M_{k,k}(\Jac(\Lambda[G])),
$$
we can restrict to $\Lambda$ itself being a finite field of
characteristic $\ell$. We are thus reduced to the classical case
treated in \cite[Prop.~5.26]{CurtisReinerI}.

Hence, returning to the general situation, we find an open normal pro-$\ell$
subgroup $U$ of $G$ such that the kernel of
$$
\Lambda[[G]]\mto \Lambda/\Jac(\Lambda)[G/U]
$$
is contained in $\Jac(\Lambda[[G]])$. This kernel is an open ideal of $\Lambda[[G]]$ generated by a system of generators of $\Jac(\Lambda)$ over $\Lambda$ together with the elements $1-u_i$ for a system of topological generators $(u_i)$ of $U$. Thus, $\Jac(\Lambda[[G]])$  is also open and finitely generated. Therefore, we conclude that $\Lambda[[G]]$ is an adic ring.
\end{proof}

We will now examine the algebraic $\KTh$-groups of $\Lambda$. For
this, we will follow Waldhausen's approach \cite{Wal:AlgKTheo}.
Recall that a \emph{Waldhausen category} is
a category $\cat{W}$ with zero object together with two classes of
morphisms, called \emph{cofibrations} and \emph{weak
equivalences}, that satisfy a certain set of axioms. Using
Waldhausen's $S$-construction one can associate to each such
category in a functorial manner a connected pointed topological space
$X(\cat{W})$. By definition, the $n$-th $K$-group of $\cat{W}$ is
the $(n+1)$-th homotopy group of this space:
$$
\KTh_n(\cat{W})=\pi_{n+1}(X(\cat{W})).
$$
\emph{Waldhausen exact functors} are functors that respect the
additional structure of a Waldhausen category. Each such functor
$F\colon \cat{W}\mto \cat{W}'$ induces a continuous map between
the associated topological spaces and hence, a homomorphism
$$
\KTh_n(F)\colon \KTh_n(\cat{W})\mto\KTh_n(\cat{W}').
$$
We refer to \cite{ThTr:HAKTS+DC} for a more thorough introduction to the
topic.

Let $R$ be any ring. Recall that a complex $\cmplx{M}$ of left $R$-modules is
called \emph{strictly bounded} if there exists a number $k$ such that $M^n=0$ for $n<-k$ and for $n>k$. The complex $\cmplx{M}$ is called \emph{strictly perfect} if it is strictly bounded and for
every $n$, the module $M^n$ is finitely generated and projective.
The complex $\cmplx{M}$ is called \emph{perfect} if it
quasi-isomorphic to a strictly perfect complex in the category of
all complexes of left $R$-modules.

\begin{defn}
We let $\cat{SP}(R)$ denote the
Waldhausen category of strictly perfect complexes and $\cat{P}(R)$
the Waldhausen category of perfect complexes: The morphisms are morphisms of complexes $f\colon \cmplx{M}\rightarrow \cmplx{N}$ in the usual sense (not morphisms in the derived category). The weak equivalences are the
quasi-isomorphisms and the cofibrations are
morphisms of complexes $f\colon \cmplx{M}\rightarrow \cmplx{N}$  which are injective (i.\;e.\ for each $n$, $f\colon M^n\rightarrow N^n$ is injective).
\end{defn}

By the Gillet-Waldhausen Theorem \cite[Theorem 1.11.7]{ThTr:HAKTS+DC} we know that the Waldhausen
$\KTh$-theory of $\cat{SP}(R)$ and of $\cat{P}(R)$ coincide with the Quillen
$\KTh$-theory of $R$:
$$\KTh_n(\cat{P}(R))=\KTh_n(\cat{SP}(R))=\KTh_n(R).$$

\begin{defn}\label{defn:complexes of bimodules}
Let $R$ and $S$ be rings. We denote by
$R^{\op}\text{-}\cat{SP}(S)$ the Waldhausen category of complexes
of $S$-$R$-bimodules (with $S$ acting from the left, $R$ acting
from the right) which are strictly perfect as complexes of
$S$-modules. The weak equivalences and cofibrations are the same as in $\cat{SP}(S)$.
\end{defn}

For complexes $\cmplx{M}$ and $\cmplx{N}$ of right and left
$R$-modules, respectively, we let
$$
\cmplx{(M\tensor_R N)}
$$
denote
the total complex of the bicomplex $\cmplx{M}\tensor_R \cmplx{N}$.
Any complex $\cmplx{M}$ in $R^{\op}\text{-}\cat{SP}(S)$
gives rise to a Waldhausen exact functor
$$
\cmplx{(M\tensor_R(-))}\colon \cat{SP}(R)\mto \cat{SP}(S).
$$
and hence, to homomorphisms $ \KTh_n(R)\mto \KTh_n (S)$.

Let now $\Lambda$ be an adic ring.
We introduce another Waldhausen category
computing the $\KTh$-theory of $\Lambda$ which is more convenient for our purposes.

\begin{defn}
Let $R$ be any ring. A complex $\cmplx{M}$ of left $R$-modules is
called \emph{$DG$-flat} if every module $M^n$ is flat and for
every acyclic complex $\cmplx{N}$ of right $R$-modules, the
complex $\cmplx{(N\tensor_R M)}$ is acyclic.
\end{defn}

Note that a strictly bounded above complex $\cmplx{M}$ is $DG$-flat precisely if every module $M^n$ is flat. The notion of $DG$-flatness, which was introduced in \cite{AvrFoxby:HomDimUnbCompl}, allows us to deal also with unbounded complexes. These are often the most natural objects to consider in our context. More precisely, we will have to deal with inverse systems of such objects:

\begin{defn}\label{defn:PDG(Lambda)}
Let $\Lambda$ be an adic ring. We denote by
$\cat{PDG}^{\cont}(\Lambda)$ the following Waldhausen category.
The objects of $\cat{PDG}^{\cont}(\Lambda)$ are inverse system
$(\cmplx{P}_I)_{I\in \openideals_{\Lambda}}$ satisfying the
following conditions:
\begin{enumerate}
\item for each $I\in\openideals_{\Lambda}$, $\cmplx{P}_I$ is a
$DG$-flat perfect complex of left $\Lambda/I$-modules,

\item for each $I\subset J\in\openideals_{\Lambda}$, the
transition morphism of the system
$$
\varphi_{IJ}:\cmplx{P}_I\mto \cmplx{P}_J
$$
induces an isomorphism
$$
\Lambda/J\tensor_{\Lambda/I}\cmplx{P}_I\isomorph \cmplx{P}_J.
$$
\end{enumerate}
A morphism of inverse systems $(f_I\colon \cmplx{P}_I\mto
\cmplx{Q}_I)_{I\in\openideals_{\Lambda}}$ in
$\cat{PDG}^{\cont}(\Lambda)$ is a weak equivalence if every $f_I$
is a quasi-isomorphism. It is a cofibration if every $f_I$ is
injective.
\end{defn}

\begin{prop}\label{prop:embedding SP(Lambda) in PDG(Lambda)}
The Waldhausen exact functor
$$
F\colon \cat{SP}(\Lambda)\mto\cat{PDG}^{\cont}(\Lambda),\qquad
\cmplx{P}\mto
(\Lambda/I\tensor_{\Lambda}\cmplx{P})_{I\in\openideals_{\Lambda}}
$$
identifies $\cat{SP}(\Lambda)$ with a full Waldhausen subcategory
of $\cat{PDG}^{\cont}(\Lambda)$ such that for every $\cmplx{Q}$ in $\cat{PDG}^{\cont}(\Lambda)$ there exists a complex $\cmplx{P}$ in $\cat{SP}(\Lambda)$ and a quasi-isomorphism $F(\cmplx{P})\wto \cmplx{Q}$. Moreover, $F$ induces isomorphisms
$$
\KTh_n(\cat{SP}(\Lambda))\isomorph\KTh_n(\cat{PDG}^{\cont}(\Lambda)).
$$
\end{prop}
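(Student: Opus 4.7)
The proof splits naturally into three tasks. I first verify that $F$ takes values in $\cat{PDG}^{\cont}(\Lambda)$ and is fully faithful. If $\cmplx{P}$ is strictly perfect over $\Lambda$, then each $\Lambda/I\tensor_{\Lambda}\cmplx{P}$ is strictly perfect over $\Lambda/I$, hence $DG$-flat and perfect, and the transition morphisms for $I\subset J$ are tautologically the base-change isomorphisms demanded by Definition~\ref{defn:PDG(Lambda)}. Full faithfulness reduces to the termwise identification
\[
\Hom_{\Lambda}(P^n,Q^n)\isomorph\varprojlim_{I\in\openideals_{\Lambda}}\Hom_{\Lambda/I}(\Lambda/I\tensor P^n,\Lambda/I\tensor Q^n),
\]
which holds because each $P^n$ is finitely generated projective and $\Lambda=\varprojlim\Lambda/I$.

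The heart of the proof is the approximation step: given $\cmplx{Q}=(\cmplx{Q}_I)_{I\in\openideals_{\Lambda}}$ in $\cat{PDG}^{\cont}(\Lambda)$, produce $\cmplx{P}$ in $\cat{SP}(\Lambda)$ and a quasi-isomorphism $F(\cmplx{P})\wto\cmplx{Q}$. Since the cohomology of $\cmplx{Q}$ is bounded and each $\cmplx{Q}_I$ is perfect, I proceed by descending induction on the cohomological degree, at each step constructing a finitely generated projective $\Lambda$-module $P^n$ together with a compatible map to the relevant truncation of $\cmplx{Q}_I$. The projective modules are obtained by lifting idempotents along the adic surjection $\Lambda\qto\Lambda/\Jac(\Lambda)$, whose target is a finite semisimple ring by Proposition~\ref{prop:l-admissible group rings are adic} (applied with $G$ trivial); the differentials are assembled in the inverse limit from compatible lifts chosen modulo the powers $\Jac(\Lambda)^k$, each lifting step being possible because $P^n$ is projective. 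The resulting inverse system of differentials converges by completeness of $\Lambda$, and Nakayama's lemma applied to the mapping cone of $F(\cmplx{P})\mto\cmplx{Q}$ verifies that this map is a quasi-isomorphism.

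For the $\KTh$-theory statement I invoke Waldhausen's approximation theorem, whose two hypotheses I check as follows. First, a morphism $f$ in $\cat{SP}(\Lambda)$ is a quasi-isomorphism iff $F(f)$ is a weak equivalence: reducing via the mapping cone, this amounts to the statement that a strictly perfect complex is acyclic iff its reduction modulo $\Jac(\Lambda)$ is, which is a standard consequence of Nakayama's lemma. Second, every morphism $x\colon F(\cmplx{P})\mto\cmplx{Q}$ in $\cat{PDG}^{\cont}(\Lambda)$ must factor as the image under $F$ of an injective cofibration in $\cat{SP}(\Lambda)$ followed by a weak equivalence; I obtain this factorisation by applying the lifting construction of the previous paragraph to the mapping cylinder of $x$, noting that the usual mapping cylinders in $\cat{SP}(\Lambda)$ supply the requisite cylinder functor. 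The main obstacle is clearly the lifting step: one must carry out the degree-by-degree construction so that the resulting morphisms $\Lambda/I\tensor_{\Lambda}\cmplx{P}\mto\cmplx{Q}_I$ are genuinely coherent across all $I\in\openideals_{\Lambda}$ simultaneously, and this is where the hypothesis that $\Lambda$ is adic (completeness and finite generation of $\Jac(\Lambda)$) is indispensable.
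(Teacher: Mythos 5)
Your proposal is correct and follows essentially the same route as the paper: the paper reduces the whole statement to showing that every object of $\cat{PDG}^{\cont}(\Lambda)$ admits a quasi-isomorphism from some $F(\cmplx{P})$ with $\cmplx{P}$ strictly perfect (citing the lifting argument of \cite[Proposition~1.6.5]{FK:CNCIT}, i.e.\ the same degree-by-degree construction via idempotent lifting along $\Lambda\qto\Lambda/\Jac(\Lambda)$ and completeness of $\Lambda$ that you sketch), and then deduces the $\KTh$-theory isomorphisms from the Waldhausen approximation theorem exactly as you do. The only difference is one of presentation: you unwind the cited lifting argument rather than quoting it.
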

\begin{proof}
The main step is to show that for every object $(\cmplx{Q}_I)_{I\in\openideals_{\Lambda}}$ in $\cat{PDG}^{\cont}(\Lambda)$, the complex
$$
\varprojlim_{I\in\openideals_{\Lambda}}\cmplx{Q}_I
$$
is a perfect complex of $\Lambda$-modules. This is proved using the argument of \cite[Proposition~1.6.5]{FK:CNCIT}. The assertion about the $\KTh$-theory is then an easy consequence of the Waldhausen approximation theorem.
We refer to \cite[Proposition~5.2.5]{Witte:PhD} for the details.
\end{proof}

\begin{rem}
Definition~\ref{defn:PDG(Lambda)} makes sense for any compact ring $\Lambda$. However, we do not expect Proposition~\ref{prop:embedding SP(Lambda) in PDG(Lambda)} to be true in this generality. The argument of  \cite[Proposition~1.6.5]{FK:CNCIT} uses in an essential way that $\Lambda$ is compact for its $\Jac(\Lambda)$-adic topology.
\end{rem}

We can extend the definition of the tensor product to
$\cat{PDG}^{\cont}(\Lambda)$ as follows.

\begin{defn}\label{defn:change of ring functor}
For $(\cmplx{P}_I)_{I\in\openideals_{\Lambda}}\in
\cat{PDG}^{\cont}(\Lambda)$ and $\cmplx{M}\in
\Lambda^{\op}\text{-}\cat{SP}(\Lambda')$
we define a Waldhausen exact functor
$$
\ringtransf_{\cmplx{M}}\colon
\cat{PDG}^{\cont}(\Lambda)\mto\cat{PDG}^{\cont}(\Lambda'),\qquad \cmplx{P}\mto(\varprojlim_{J\in\openideals_{\Lambda}}
\Lambda'/I\tensor_{\Lambda'}\cmplx{(M\tensor_{\Lambda}P_{J})})_{I\in\openideals_{\Lambda'}}
$$
\end{defn}

Note that for every $I\in\openideals_{\Lambda'}$ there exists a $J_0\in \openideals_{\Lambda}$ such that
$$
\varprojlim_{J\in\openideals_{\Lambda}}
\Lambda'/I\tensor_{\Lambda'}\cmplx{(M\tensor_{\Lambda}P_{J})}=\cmplx{(M/IM\tensor_{\Lambda/J_0}P_{J_0})}.
$$
One checks easily that this definition is compatible with the usual tensor product with $\cmplx{M}$ on $\cat{SP}(\Lambda)$.

From \cite{MT:1TWKTS} we deduce the following generators and relations for the
group $\KTh_1(\Lambda)$.

\begin{prop}\label{prop:presentation of K_1}
The group $\KTh_1(\Lambda)$ is  generated by quasi-isomorphisms
$$(f_I\colon \cmplx{P}_I\wto
\cmplx{P}_I)_{I\in\openideals_{\Lambda}}
$$
in $\cat{PDG}^{\cont}(\Lambda)$. Moreover, the following
relations are satisfied:
\begin{enumerate}
\item $[(f_I\colon\cmplx{P}_I\wto
\cmplx{P}_I)_{I\in\openideals_{\Lambda}}]=[(g_I\colon\cmplx{P}_I\wto
\cmplx{P}_I)_{I\in\openideals_{\Lambda}}][(h_I\colon\cmplx{P}_I\wto
\cmplx{P}_I)_{I\in\openideals_{\Lambda}}]$ if for each
$I\in\openideals_{\Lambda}$, one has $f_I=g_I\comp h_I$,

\item $[(f_I\colon\cmplx{P}_I\wto
\cmplx{P}_I)_{I\in\openideals_{\Lambda}}]=[(g_I\colon\cmplx{Q}_I\wto
\cmplx{Q}_I)_{I\in\openideals_{\Lambda}}]$ if for each
$I\in\openideals_{\Lambda}$, there exists a quasi-isomorphism
$a_I\colon \cmplx{P}_I\wto \cmplx{Q}_I$ such that the square
$$
\xymatrix{\cmplx{P}_I\ar[r]^{f_I}\ar[d]^{a_I}&\cmplx{P}_I\ar[d]^{a_I}\\
\cmplx{Q}_I\ar[r]^{g_I}&\cmplx{Q}_I}
$$
commutes up to homotopy,

\item $[(g_I\colon\cmplx{P'}_I\wto
\cmplx{P'}_I)_{I\in\openideals_{\Lambda}}]=[(f_I\colon\cmplx{P}_I\wto
\cmplx{P}_I)_{I\in\openideals_{\Lambda}}][(h_I\colon\cmplx{P''}_I\wto
\cmplx{P''}_I)_{I\in\openideals_{\Lambda}}]$ if for each
$I\in\openideals_{\Lambda}$, there exists an exact sequence $\cmplx{P}\cto
\cmplx{P'}\qto \cmplx{P''}$ such that the diagram
$$
\xymatrix{\cmplx{P}_I\ar[d]^{f_I}\ar@{>->}[r]&\cmplx{P'}_I\ar[d]^{g_I}\ar@{>>}[r]&\cmplx{P''}\ar[d]^{h_I}\\
\cmplx{P}_I\ar@{>->}[r]&\cmplx{P'}_I\ar@{>>}[r]&\cmplx{P''}}
$$
commutes in the strict sense.
\end{enumerate}
\end{prop}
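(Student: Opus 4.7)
The plan is to deduce this directly from the Muro--Tonks presentation of $\KTh_1$ of a Waldhausen category established in \cite{MT:1TWKTS}, applied to the category $\cat{PDG}^{\cont}(\Lambda)$, and then transfer the resulting presentation to $\KTh_1(\Lambda)$ via Proposition~\ref{prop:embedding SP(Lambda) in PDG(Lambda)} together with the Gillet--Waldhausen identification $\KTh_1(\cat{SP}(\Lambda))=\KTh_1(\Lambda)$.

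First, I would recall that the main result of \cite{MT:1TWKTS} gives, for a suitable Waldhausen category $\cat{W}$ (in particular for a complicial Waldhausen category of chain complexes with quasi-iso\-morphisms as weak equivalences and injections as cofibrations), a presentation of $\KTh_1(\cat{W})$ whose generators are pairs $(X,f)$ consisting of an object $X$ together with a weak self-equivalence $f\colon X\wto X$, and whose defining relations encode, respectively: multiplicativity under composition of self-equivalences on a common object, invariance under squares commuting up to homotopy through weak equivalences, and additivity along short exact sequences. After unwinding the fact that a morphism in $\cat{PDG}^{\cont}(\Lambda)$ is by definition a compatible system $(f_I)_{I\in\openideals_{\Lambda}}$ of chain maps, the three conditions (1)--(3) in the statement are precisely the translations of these Muro--Tonks relations to the present setting.

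The substantive step is to verify that $\cat{PDG}^{\cont}(\Lambda)$ satisfies the axioms required by Muro--Tonks. Cones, cylinders and mapping cylinders can be formed levelwise on each $\cmplx{P}_I$ and remain compatible systems of $DG$-flat perfect complexes, so the standard complicial Waldhausen structure is available and the cylinder, saturation, and extension axioms reduce to their counterparts at each level $I\in\openideals_{\Lambda}$. Since the transition morphisms $\varphi_{IJ}$ are strict and the tensor product $\Lambda/J\tensor_{\Lambda/I}(-)$ preserves cones and cofibrations of $DG$-flat perfect complexes, compatibility across the system comes for free. This book-keeping verification is what I expect to be the main obstacle.

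Finally, Proposition~\ref{prop:embedding SP(Lambda) in PDG(Lambda)} supplies an isomorphism $\KTh_1(\cat{PDG}^{\cont}(\Lambda))\isomorph \KTh_1(\cat{SP}(\Lambda))\isomorph\KTh_1(\Lambda)$, so the presentation obtained in $\cat{PDG}^{\cont}(\Lambda)$ transports to one of $\KTh_1(\Lambda)$ in exactly the form claimed, with the generator associated to a self weak equivalence of an object $(\cmplx{P}_I)_{I\in\openideals_{\Lambda}}$ becoming the class $[(f_I\colon\cmplx{P}_I\wto\cmplx{P}_I)_{I\in\openideals_{\Lambda}}]$ of the statement.
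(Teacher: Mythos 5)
Your overall route --- apply the Muro--Tonks description of the $1$-type of $\cat{PDG}^{\cont}(\Lambda)$ and transfer the result to $\KTh_1(\Lambda)$ via Proposition~\ref{prop:embedding SP(Lambda) in PDG(Lambda)} --- is the paper's route, but the part you flag as the main obstacle (levelwise verification of the Waldhausen axioms) is the harmless part, and two genuine gaps remain. First, \cite{MT:1TWKTS} does \emph{not} present $\KTh_1(\cat{W})$ by self weak equivalences with your three relations: it presents the stable quadratic module $\D_*\cat{W}$, whose degree-one generators are \emph{all} weak equivalences and \emph{all} exact sequences, and identifies $\KTh_1(\cat{W})=\ker(\del\colon\D_1\cat{W}\mto\D_0\cat{W})$. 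A self-equivalence $f\colon\cmplx{P}\wto\cmplx{P}$ does lie in this kernel (since $\del[f]=[P]^{-1}[P]=1$), but that such classes \emph{generate} the kernel is not a formal consequence of the presentation and fails for general Waldhausen categories. Here one must feed in the classical computation for adic rings: by Proposition~\ref{prop:K_1 of adic rings}, $\KTh_1(\Lambda)=\varprojlim_I\KTh_1(\Lambda/I)$ with each $\Lambda/I$ semilocal, so $\KTh_1(\Lambda)$ is already generated by automorphisms of finitely generated projective modules viewed as complexes concentrated in degree $0$. Your proposal never supplies this input.

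Second, the relations do not all ``translate'' from Muro--Tonks. Relation (2) asserts invariance under squares that commute only \emph{up to homotopy}, whereas the relevant Muro--Tonks relation (R6) requires strictly commutative diagrams; bridging this needs a separate argument (the paper cites \cite[Lemma~3.1.6]{Witte:PhD} for exactly this point). Moreover, in relations (2) and (3) the comparison data --- the quasi-isomorphisms $a_I$ and the exact sequences --- are only required to exist for each $I\in\openideals_{\Lambda}$ separately, with no compatibility with the transition maps $\varphi_{IJ}$; they therefore need not assemble into morphisms or exact sequences of $\cat{PDG}^{\cont}(\Lambda)$, so the relations of $\D_*\cat{PDG}^{\cont}(\Lambda)$ cannot be invoked directly. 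One must again use $\KTh_1(\Lambda)=\varprojlim_I\KTh_1(\Lambda/I)$ to check the asserted identities one finite level at a time. With these two additions (and the routine verification that self-equivalences commute with the boundary classes $[\Delta]$ in the stable quadratic module, which follows from $\pair{X}{1}=1$), your argument closes up into the paper's proof.
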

\begin{proof}
The description of $\KTh_1(\cat{PDG}^{\cont}(\Lambda))$ as the
kernel of
$$
\mathcal{D}_1\cat{PDG}^{\cont}(\Lambda)\xrightarrow{\del}\mathcal{D}_0\cat{PDG}^{\cont}(\Lambda)
$$
given in \cite[Def.~1.4]{MT:1TWKTS} (see also \ref{defn:1-type}) shows that all endomorphisms which are quasi-isomorphisms do indeed give rise to elements of $\KTh_1(\cat{PDG}^{\cont}(\Lambda))$. Together
with the isomorphism
$$
\KTh_1(\Lambda)\isomorph \varprojlim_{I\in\openideals_{\Lambda}}\KTh_1(\Lambda/I)
$$
\cite[Prop. 1.5.3]{FK:CNCIT} this
description also implies that relations (1) and (3) are satisfied.
For relation (2), one can use \cite[Lemma 3.1.6]{Witte:PhD}.
Finally, the classical description of $\KTh_1(\Lambda)$ implies
that $\KTh_1(\cat{PDG}^{\cont}(\Lambda))$ is already generated by
isomorphisms of finitely generated, projective modules viewed as
strictly perfect complexes concentrated in degree $0$.
\end{proof}

\section{Localisation}\label{sec:localisation}

Localisation is considered a difficult topic in noncommutative
ring theory. To be able to localise at a set of elements $S$ in a
noncommutative ring $R$ one needs to show that this set is a
denominator set. In particular, one must verify the Ore condition
which is often a tedious task. We can avoid this topic
by localising the associated Waldhausen category of perfect
complexes instead of the ring itself.

Let $\Lambda$ be an adic $\Int_{\ell}$-algebra, $H$ a closed subgroup of a profinite group $G$ and assume that both $G$ and $H$ are virtual pro-$\ell$-groups. We define the following Waldhausen categories.

\begin{defn}
We write $\cat{PDG}^{\cont,w_H}(\Lambda[[G]])$ for the full
Waldhausen subcategory of $\cat{PDG}^{\cont}(\Lambda[[G]])$ of
objects $(\cmplx{P}_J)_{J\in\openideals_{\Lambda[[G]]}}$ such that
$$
\varprojlim_{J\in\openideals_{\Lambda[[G]]}} \cmplx{P}_J
$$
is a perfect complex of $\Lambda[[H]]$-modules.

We write $w_H\cat{PDG}^{\cont}(\Lambda[[G]])$ for the Waldhausen
category with the same objects, morphisms and cofibrations as
$\cat{PDG}^{\cont}(\Lambda[[G]])$, but with a new set of weak
equivalences given by those morphisms whose cones are objects of the category
$\cat{PDG}^{\cont,w_H}(\Lambda[[G]])$.
\end{defn}

Note that $\cat{PDG}^{\cont,w_H}(\Lambda[[G]])$ is a full additive
subcategory of $\cat{PDG}^{\cont}(\Lambda[[G]])$ and that it is
closed under weak equivalences, shifts, and extensions. This
implies immediately that both
$\cat{PDG}^{\cont,w_H}(\Lambda[[G]])$ and
$w_H\cat{PDG}^{\cont}(\Lambda[[G]])$ are indeed Waldhausen
categories (see e.\,g.\ \cite[Section~3]{HirMochi:NegativeK}) and
that the natural functors
$$
\cat{PDG}^{\cont,w_H}(\Lambda[[G]])\mto\cat{PDG}^{\cont}(\Lambda[[G]])\mto
w_H\cat{PDG}^{\cont}(\Lambda[[G]])
$$
induce a cofibre sequence of the associated $\KTh$-theory spaces
and hence, a long exact localisation sequence
\begin{align*}
\cdots\mto\KTh_i(\cat{PDG}^{\cont,w_H}(\Lambda[[G]]))\mto\KTh_i(\cat{PDG}^{\cont}(\Lambda[[G]]))\mto\\
\KTh_i(w_H\cat{PDG}^{\cont}(\Lambda[[G]]))\mto\KTh_{i-1}(\cat{PDG}^{\cont,w_H}(\Lambda[[G]]))\mto\cdots
\end{align*}
\cite[Theorem~1.8.2]{ThTr:HAKTS+DC}.

Assume for the moment that $\Lambda=\Int_{\ell}$ and that
$G\isomorph H\rtimes\Gamma_{\ell^{\infty}}$ with $H$ a compact $\ell$-adic Lie group and
$\Gamma_{\ell^{\infty}}\isomorph \Int_{\ell}$. Then Venjakob constructed a left and right
Ore set $S$ of non-zero divisors in $\Int_{\ell}[[G]]$. In particular,
the quotient ring $S^{-1}\Int_{\ell}[[G]]$ exists and is flat as
right $\Int_{\ell}[[G]]$-module. Moreover, a finitely generated
$\Int_{\ell}[[G]]$-module is $S$-torsion if and only if it is
finitely generated as $\Int_{\ell}[[H]]$-module
\cite[Section~2]{CFKSV}.

Since any choice of a generator of $\Gamma_{\ell^{\infty}}$ identifies $\Int_{\ell}[[G]]$ with a skew power series ring over the
noetherian ring $\Int_{\ell}[[H]]$, we see that
$$
\Int_{\ell}[[G]]\isomorph\prod_{n\geq 0} \Int_{\ell}[[H]]
$$
is flat as $\Int_{\ell}[[H]]$-module. In particular, a complex
$(\cmplx{P}_J)_{J\in\openideals_{\Int_{\ell}[[G]]}}$ in
$\cat{PDG}^{\cont}(\Int_{\ell}[[G]])$ is in
$\cat{PDG}^{\cont,w_H}(\Int_{\ell}[[G]])$ if and only if
$$
S^{-1}\Int_{\ell}[[G]]\tensor_{\Int_{\ell}[[G]]}\varprojlim_{J\in\openideals_{\Int_{\ell}[[G]]}}\cmplx{P}_J
$$
is acyclic.

From the localisation theorem in \cite{WeibYao:Localization} we
conclude that in this case,
$$
\KTh_i(\cat{PDG}^{\cont,w_H}(\Int_{\ell}[[G]]))=\KTh_i(\Int_{\ell}[[G]],S^{-1}\Int_{\ell}[[G]])
$$
is the relative $\KTh$-group and that the functor
\begin{align*}
w_H\cat{PDG}^{\cont}(\Int_{\ell}[[G]])&\mto\cat{P}(S^{-1}\Int_{\ell}[[G]]),\\
(\cmplx{P}_J)_{J\in\openideals_{\Int_{\ell}[[G]]}}&\mapsto
S^{-1}\Int_{\ell}[[G]]\tensor_{\Int_{\ell}[[G]]}\varprojlim_{J\in\openideals_{\Int_{\ell}[[G]]}}\cmplx{P}_J
\end{align*}
induces isomorphisms
$$
\KTh_i(w_H\cat{PDG}^{\cont}(\Int_{\ell}[[G]]))=
\begin{cases}
\KTh_i(S^{-1}\Int_{\ell}[[G]]) & \text{if $i>0$,}\\
\im \KTh_0(\Int_{\ell}[[G]])\mto \KTh_0(S^{-1}\Int_{\ell}[[G]]) &
\text{if $i=0$}
\end{cases}
$$
(see also \cite[Prop.~5.3.4]{Witte:PhD}).

We need this more explicit description of $\KTh_1(w_H\cat{PDG}^{\cont}(\Lambda[[G]]))$ only in the following situation.

\begin{lem}\label{lem:K_1 in the commutative case}
Let $\Lambda$ be a commutative adic $\Int_{\ell}$-algebra, $H=1$ and $G=\Gamma_{k\ell^{\infty}}$ with $k$ prime to $\ell$. Set
$$
S=\{f\in\Lambda[[\Gamma_{k\ell^{\infty}}]]\colon \text{$[f]\in\Lambda/\Jac(\Lambda)[[\Gamma_{k\ell^{\infty}}]]$ is a nonzerodivisor}\}
$$
Then
$$
\KTh_1(w_1\cat{PDG}^{\cont}(\Lambda[[\Gamma_{k\ell^{\infty}}]]))=\KTh_1(\Lambda[[\Gamma_{k\ell^{\infty}}]]_S)
=\Lambda[[\Gamma_{k\ell^{\infty}}]]_S^{\times}.
$$
\end{lem}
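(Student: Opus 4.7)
Set $A := \Lambda[[\Gamma_{k\ell^\infty}]]$. Since $\Lambda$ and $\Gamma_{k\ell^\infty}$ are both commutative, so is $A$, and Proposition~\ref{prop:l-admissible group rings are adic} shows it is adic. Writing $\Gamma_{k\ell^\infty} = \Int/k\Int \times \Int_\ell$, choosing a topological generator $\gamma$ of the $\Int_\ell$-factor, and setting $T:=\gamma-1$ gives $A=\Lambda[\Int/k\Int][[T]]$. Since $k$ is a unit in $\Lambda$, the $\Lambda$-algebra $\Lambda[\Int/k\Int]$ is étale, so its residue ring modulo $\Jac(\Lambda)$ is a finite product of finite fields of characteristic $\ell$; lifting the primitive idempotents via completeness decomposes $\Lambda[\Int/k\Int]=\prod_m \Lambda_m$ into complete local adic rings with finite residue fields $F_m$. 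Hence $\bar A:=A/\Jac(\Lambda)A = \prod_m F_m[[T]]$, and the set $S$ consists of those $f\in A$ whose image in each factor is non-zero; in particular $S$ is multiplicatively closed. Every $s\in S$ is a non-zero-divisor in $A$: $\bar A$ is flat over $\bar\Lambda:=\Lambda/\Jac(\Lambda)$, so multiplication by $s$ is injective on each graded piece $\Jac(\Lambda)^n A/\Jac(\Lambda)^{n+1}A$, and $\bigcap_n \Jac(\Lambda)^n A \subset \bigcap_n \Jac(A)^n = 0$ by adic completeness.

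Commutativity makes $S$ an Ore set automatically, so $A_S$ is flat over $A$. Analysing the primes of $A = \prod_m \Lambda_m[[T]]$ disjoint from $S$, one finds that the unique maximal such in each factor is $\Jac(\Lambda_m)\Lambda_m[[T]]$, so $A_S$ is commutative semilocal with $A_S/\Jac(A_S) = \prod_m F_m((T))$. By the classical theorem of Bass and Vaserstein, $\KTh_1(A_S) = A_S^\times$, establishing the second equality of the lemma. For the first equality $\KTh_1(w_1\cat{PDG}^{\cont}(A)) = \KTh_1(A_S)$, I would invoke the Weibel-Yao localisation theorem \cite{WeibYao:Localization} exactly as in the paragraph preceding the lemma.

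The key step to verify is that $(\cmplx{P}_J) \in \cat{PDG}^{\cont}(A)$ lies in $\cat{PDG}^{\cont,w_1}(A)$ — i.\,e.\ its limit $\cmplx{P}$ is perfect over $\Lambda$ — if and only if $A_S \tensor_A \cmplx{P}$ is acyclic. Via the factorisation $A = \prod_m \Lambda_m[[T]]$ we reduce to $\Lambda$ complete local with $A = \Lambda[[T]]$. Here classical Weierstrass preparation factors every $s\in S$ as $s = u\cdot p$ with $u\in A^\times$ and $p$ a distinguished polynomial, so $A/(s)\cong\Lambda[T]/(p)$ is finite free over $\Lambda$. For the converse direction, finite generation of $H^i(\cmplx{P})$ over $\Lambda$ combined with Cayley-Hamilton applied to the $T$-action (lifted to a free $\Lambda$-module surjecting onto $H^i(\cmplx{P})$) produces a monic polynomial in $T$ with non-zero reduction in each $F_m[[T]]$ that annihilates $H^i(\cmplx{P})$, showing it is $S$-torsion. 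The forward direction — that a perfect $A$-complex with $S$-torsion cohomology is automatically perfect over $\Lambda$ — is the main obstacle: establishing it requires using Weierstrass preparation to realise such complexes via $A/(s)$-module resolutions and then transferring projective dimension across the flat extension $\Lambda \mto A$ to obtain a bounded $\Lambda$-projective resolution for the whole limit complex, not merely for each cohomology group.
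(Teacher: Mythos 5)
Your skeleton matches the paper's: obtain the first equality from the Weibel--Yao localisation theorem by characterising the objects of $\cat{PDG}^{\cont,w_1}(\Lambda[[\Gamma_{k\ell^{\infty}}]])$ as the perfect complexes with $S$-torsion cohomology, and obtain the second from the fact that $\KTh_1$ of a commutative semilocal ring is its unit group via the determinant (the paper cites Curtis--Reiner where you cite Bass--Vaserstein; same theorem). The problem is that you explicitly leave the decisive implication unproved: you call the statement that a perfect complex of $\Lambda[[\Gamma_{k\ell^{\infty}}]]$-modules with $S$-torsion cohomology is perfect over $\Lambda$ ``the main obstacle'' and offer only a plan involving $A/(s)$-resolutions and ``transferring projective dimension across the flat extension $\Lambda\mto A$''. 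As written this is a gap, and the sketched plan is dubious as well as unnecessary: $\Lambda[[\Gamma_{k\ell^{\infty}}]]$ has infinite rank over $\Lambda$, the terms of the complex are flat but not finitely generated (nor obviously projective) as $\Lambda$-modules, and projective dimension does not transfer along a flat ring extension in any routine way.

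The observation that closes this step in the paper is the following: a commutative adic ring is noetherian, and a strictly bounded complex of flat modules over a noetherian ring is perfect if and only if its cohomology modules are finitely generated (pseudo-coherence plus finite Tor-amplitude). Since the terms of a strictly perfect complex of $\Lambda[[\Gamma_{k\ell^{\infty}}]]$-modules are flat over $\Lambda$ (Lemma~\ref{lem:compactflatness property}, together with the remark that compact-flat and flat agree over a noetherian compact ring), perfectness over $\Lambda$ is equivalent to finite generation of each $\HF^i(\cmplx{P})$ over $\Lambda$. This reduces the entire equivalence to a statement about a single finitely generated $\Lambda[[\Gamma_{k\ell^{\infty}}]]$-module, namely that it is finitely generated over $\Lambda$ if and only if it is $S$-torsion --- which is \cite[Proposition~2.3]{CFKSV}, and which is exactly what your Weierstrass-preparation and Cayley--Hamilton arguments do establish. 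Inserting this noetherian reduction makes your proof complete; without it, the forward direction remains open.
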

\begin{proof}
We will show that a strictly perfect complex $\cmplx{P}$ of $\Lambda[[\Gamma_{k\ell^{\infty}}]]$-modules is perfect as complex of $\Lambda$-modules if and only if its cohomology groups are $S$-torsion. Then  the localisation theorem in \cite{WeibYao:Localization} implies that
$$
\KTh_n(w_1\cat{PDG}^{\cont}(\Lambda[[\Gamma_{k\ell^{\infty}}]]))=\KTh_n(\Lambda[[\Gamma_{k\ell^{\infty}}]]_S)
$$
for $n\geq 1$. Since $\Lambda[[\Gamma_{k\ell^{\infty}}]]_S$ is clearly a commutative semilocal ring, the determinant map induces an isomorphism
$$
\KTh_1(\Lambda[[\Gamma_{k\ell^{\infty}}]]_S)
\isomorph\Lambda[[\Gamma_{k\ell^{\infty}}]]_S^{\times}
$$
\cite[Theorem~40.31]{CurtisReinerII}.

Recall that a commutative adic ring is always noetherian and that a bounded complex over a noetherian ring is perfect if and only if its cohomology modules are finitely generated. Hence, it suffices to show that a finitely generated $\Lambda[[\Gamma_{\ell^{k\infty}}]]$-module $M$ is finitely generated as $\Lambda$-module if and only if it is $S$-torsion. This is in turn a direct consequence of the structure theory for finitely generated modulues over the classical Iwasawa-algebra.
\end{proof}

For general $G$, $H$ and $\Lambda$, it is not difficult to prove
that the first $\KTh$-group of
$w_H\cat{PDG}^{\cont}(\Lambda[[G]])$ agrees with the corresponding
localised $\KTh_1$-group defined in \cite[Def.~1.3.2]{FK:CNCIT}:
$$
\KTh_1(w_H\cat{PDG}^{\cont}(\Lambda[[G]]))=
\KTh_1(\cat{PDG}^{\cont}(\Lambda[[G]]),\cat{PDG}^{\cont,w_H}(\Lambda[[G]])),
$$
but we will make no use of this.

Next, let $\Lambda$ and $\Lambda'$ be two adic $\Int_{\ell}$-algebras and $G$, $G'$, $H$, $H'$ be virtual pro-$\ell$-groups. Assume that $H$ and $H'$ are closed subgroups of $G$ and $G'$, respectively. We want to investigate under which circumstances the Waldhausen exact functor
$$
\ringtransf_{\cmplx{K}}\colon \cat{PDG}^{\cont}(\Lambda[[G]])\mto \cat{PDG}^{\cont}(\Lambda'[[G']])
$$
for an object $\cmplx{K}$ in $\Lambda[[G]]^{\op}-\cat{SP}(\Lambda'[[G']])$ restricts to a functor
$$
\cat{PDG}^{\cont,w_H}(\Lambda[[G]])\mto \cat{PDG}^{\cont,w_{H'}}(\Lambda'[[G']]).
$$
Note that if this is the case, then $\ringtransf_{\cmplx{K}}$ also extends to a functor
$$
w_H\cat{PDG}^{\cont}(\Lambda[[G]])\mto w_{H'}\cat{PDG}^{\cont}(\Lambda'[[G']]).
$$
Both functors will again be denoted by $\ringtransf_{\cmplx{K}}$.

For any
compact ring $\Omega$, we let
$$
M\hat{\tensor}_\Omega N=\varprojlim_{U,V}M/U\tensor_{\Omega} N/V
$$
denote the \emph{completed tensor product} of the compact right
$\Omega$-module $M$ with the compact left $\Omega$-module $N$.
Here, $U$ and $V$ run through the open submodules of $M$ and $N$,
respectively. Note that the completed tensor product $M\hat{\tensor}_{\Omega}N$
agrees with the usual tensor product $M\tensor_{\Omega}N$ if
either $M$ or $N$ is finitely presented.

\begin{defn}
We call a compact $\Omega$-module $P$
\emph{compact-flat} if the completed tensor product with $P$
preserves continuous injections of compact modules.
\end{defn}

If the compact ring $\Omega$ is noetherian, then $P$ is compact-flat precisely if it is flat, but in general, the two notions do not need to coincide.

\begin{lem}\label{lem:compactflatness property}
Let $\Lambda$ be an adic $\Int_\ell$-algebra, $H$ a closed subgroup of $G$ such that both $G$ and $H$ are virtual pro-$\ell$-groups. Then any
finitely generated, projective $\Lambda[[G]]$-module is
compact-flat as $\Lambda[[H]]$-module.
\end{lem}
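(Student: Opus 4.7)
The plan is to reduce to the statement that $\Lambda[[G]]$ itself is compact-flat as a right $\Lambda[[H]]$-module: any finitely generated projective $\Lambda[[G]]$-module is a direct summand of some $\Lambda[[G]]^n$, and direct summands of compact-flat modules are compact-flat.

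To test compact-flatness of $\Lambda[[G]]$, let $\iota\colon M \cto N$ be a continuous injection of compact $\Lambda[[H]]$-modules. Writing $N$ as the inverse limit of its finite quotients $N_\alpha = N/N'_\alpha$ over open $\Lambda[[H]]$-submodules $N'_\alpha \subset N$, and setting $M_\alpha = M/(M \cap N'_\alpha)$, yields an inverse system of injections $M_\alpha \cto N_\alpha$ of finite $\Lambda[[H]]$-modules. Because $\Lambda[[G]] \hat{\tensor}_{\Lambda[[H]]} (-)$ is itself a cofiltered limit over systems of open submodules and therefore commutes with such inverse limits, left exactness of the limit reduces the problem to showing that the ordinary tensor product $\Lambda[[G]] \tensor_{\Lambda[[H]]} (-)$ preserves injections of finite $\Lambda[[H]]$-modules.

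Any finite $\Lambda[[H]]$-module is annihilated by some open ideal $J_0 \subset \Lambda[[H]]$, which I may arrange to be of the form $\ker(\Lambda[[H]] \to (\Lambda/I)[H/W])$ with $W = H \cap U_0$ for some open normal subgroup $U_0 \triangleleft G$; such $W$ form a neighborhood basis of $1$ in $H$ because $H$ carries the subspace topology from $G$ and the normal core of any open subgroup of $G$ is open. The tensor product then factors as $(\Lambda[[G]]/\Lambda[[G]] J_0) \tensor_{(\Lambda/I)[H/W]} M$, so it suffices to prove that $\Lambda[[G]]/\Lambda[[G]] J_0$ is flat as a right $(\Lambda/I)[H/W]$-module.

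The key computation is the identification $\Lambda[[G]]/\Lambda[[G]] J_0 \isomorph \varprojlim_{U \triangleleft G,\, U \subset U_0} (\Lambda/I)[G/WU]$, together with the observation that each coset space $G/WU$ is a \emph{free} right $H/W$-set. Indeed, if $\bar h \in H/W$ stabilizes $gWU$, then $h \in H \cap WU$; writing $h = wu$ with $w \in W$, $u \in U$ forces $u = w^{-1}h \in H \cap U \subset H \cap U_0 = W$, whence $h \in W$. Well-definedness of the $H/W$-action on $G/WU$ uses that $W$ is normal in $H$, a consequence of $U_0 \triangleleft G$. Hence each $(\Lambda/I)[G/WU]$ is a free right $(\Lambda/I)[H/W]$-module, tensoring preserves the injection $M \cto N$ at every finite level, and left exactness of the inverse limit completes the argument. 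The main subtlety I anticipate is the bookkeeping in the finite-level freeness check; the virtually pro-$\ell$ hypothesis enters only via Proposition~\ref{prop:l-admissible group rings are adic} to ensure that $\Lambda[[G]]$ and $\Lambda[[H]]$ are adic rings.
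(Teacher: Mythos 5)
Your proof is correct and takes essentially the same approach as the paper's: reduce to the module $\Lambda[[G]]$ itself, then use that at each finite level the ring $\Lambda/I[G/U]$ is free over $\Lambda/I[H/H\cap U]$ because the relevant coset space is a free $H/(H\cap U)$-set, and pass to the inverse limit. You merely make explicit the limit bookkeeping (and the harmless reparametrisation via $G/WU$ for a fixed $W=H\cap U_0$) that the paper's one-line argument leaves implicit.
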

\begin{proof}
It suffices to prove the lemma for the finitely generated,
projective $\Lambda[[G]]$-module $\Lambda[[G]]$. Then the
statement follows since for every $n$ and every open normal
subgroup $U$ in $G$, $\Lambda/\Jac^n(\Lambda)[G/U]$ is flat as
$\Lambda/\Jac^n(\Lambda)[H/H\cap U]$-module.
\end{proof}

\begin{lem}\label{lem: compactperfect=perfect}
Let $\Lambda$ be an adic ring and $\cmplx{P}$ a strictly bounded
complex of compact-flat left $\Lambda$-modules. Then $\cmplx{P}$
is a perfect complex of $\Lambda$-modules if and only if
$\Lambda/\Jac(\Lambda)\tensor_{\Lambda}\cmplx{P}$ has finite
cohomology groups.
\end{lem}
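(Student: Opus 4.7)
The plan splits into the two directions. For the ``only if'' part, which is routine, I would pick a strictly perfect complex $\cmplx{Q}$ together with a quasi-isomorphism $\cmplx{Q}\wto\cmplx{P}$. Using compact-flatness of the $\cmplx{P}^n$ together with the fact that $\Lambda/\Jac(\Lambda)$ is a finite (hence finitely presented) module, I can verify that the tensor products $\Lambda/\Jac(\Lambda)\tensor_\Lambda\cmplx{Q}$ and $\Lambda/\Jac(\Lambda)\tensor_\Lambda\cmplx{P}$ remain quasi-isomorphic. The former is then a strictly bounded complex of finitely generated modules over the finite ring $\Lambda/\Jac(\Lambda)$, and so has finite cohomology.

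For the ``if'' direction, my strategy is to show that the system $(\cmplx{P}_I)_{I\in\openideals_{\Lambda}}$ with $\cmplx{P}_I:=\Lambda/I\tensor_\Lambda\cmplx{P}$ defines an object of $\cat{PDG}^{\cont}(\Lambda)$ and then invoke Proposition~\ref{prop:embedding SP(Lambda) in PDG(Lambda)}. Fix $I\in\openideals_{\Lambda}$ and write $R=\Lambda/I$, which is a finite ring with nilpotent Jacobson radical $J=\Jac(R)$. Compact-flatness of each $\cmplx{P}^n$, tested against continuous injections of finite (hence compact) $R$-modules, forces each $\cmplx{P}_I^n$ to be flat over $R$, so $\cmplx{P}_I$ is already a DG-flat strictly bounded complex. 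The key technical point is to show that its cohomology is finitely generated over $R$. For this I would filter $\cmplx{P}_I$ by the subcomplexes $J^k\cmplx{P}_I$, use flatness to identify each associated graded piece with $J^k/J^{k+1}\tensor_{\bar{\Lambda}}\bar{P}$, where $\bar{\Lambda}=\Lambda/\Jac(\Lambda)$ and $\bar{P}=\bar{\Lambda}\tensor_{\Lambda}\cmplx{P}$, and then observe that each such piece is a strictly bounded complex of finite modules over the semisimple ring $\bar{\Lambda}$ whose cohomology is computed by $J^k/J^{k+1}\tensor_{\bar{\Lambda}}\HF^*(\bar{P})$, hence finite by hypothesis. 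A short induction on $k$ then yields finite cohomology for $\cmplx{P}_I$ itself.

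Since $R$ is finite and thus noetherian, a DG-flat strictly bounded complex of $R$-modules with finitely generated cohomology is quasi-isomorphic to a strictly perfect complex, so $\cmplx{P}_I$ is a DG-flat perfect complex of $R$-modules. The compatibility $\Lambda/J\tensor_{\Lambda/I}\cmplx{P}_I\isomorph\cmplx{P}_J$ for $I\subset J$ is immediate from associativity of the tensor product, so $(\cmplx{P}_I)_I$ lies in $\cat{PDG}^{\cont}(\Lambda)$. By Proposition~\ref{prop:embedding SP(Lambda) in PDG(Lambda)}, $\varprojlim_I\cmplx{P}_I$ is a perfect complex of $\Lambda$-modules. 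Finally, since each $\cmplx{P}^n$ is compact and the annihilator of any finite discrete quotient of $\cmplx{P}^n$ is open in $\Lambda$, the closed submodules $\overline{I\cmplx{P}^n}$ are cofinal among the open submodules of $\cmplx{P}^n$; this gives $\cmplx{P}^n=\varprojlim_I\cmplx{P}_I^n$ and, by Mittag--Leffler, $\cmplx{P}=\varprojlim_I\cmplx{P}_I$ as complexes, so $\cmplx{P}$ is perfect. The main obstacle is the filtration argument at the heart of the ``if'' direction: one must justify carefully that flatness of the $\cmplx{P}_I^n$ over $R$ correctly identifies $J^k\cmplx{P}_I/J^{k+1}\cmplx{P}_I$ with $J^k/J^{k+1}\tensor_{\bar{\Lambda}}\bar{P}$, in order for induction along the $J$-adic filtration to transport the finiteness of $\HF^*(\bar{P})$ to $\HF^*(\cmplx{P}_I)$.
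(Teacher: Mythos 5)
Your overall strategy is sound but genuinely different from the paper's. The paper handles the ``if'' direction by a direct induction on the length of $\cmplx{P}$: right-exactness of $\tensor$ gives that $\Lambda/\Jac(\Lambda)\tensor_{\Lambda}\HF^n(P)$ is finite, the topological Nakayama lemma then shows $\HF^n(P)$ is finitely generated, one surjects a shifted finite free module onto $\HF^n(P)$ and passes to the cone, and one concludes using closure of perfect complexes under extensions (plus lifting of idempotents in the base case). Your route instead reduces modulo every open ideal $I$, proves finiteness of $\HF^*(\Lambda/I\tensor_{\Lambda}\cmplx{P})$ by the $\Jac(\Lambda/I)$-adic filtration, and reassembles via Proposition~\ref{prop:embedding SP(Lambda) in PDG(Lambda)}. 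The filtration argument itself is correct (the graded pieces are complexes over the semisimple ring $\Lambda/\Jac(\Lambda)$, so tensoring commutes with cohomology, and descending induction along the nilpotent filtration works), and ``bounded complex of flat modules with finitely generated cohomology over an artinian ring'' does imply perfect. What your version buys is a reduction to finite coefficient rings; what it costs is reliance on the (nontrivial, externally referenced) perfectness of $\varprojlim_I\cmplx{P}_I$ from Proposition~\ref{prop:embedding SP(Lambda) in PDG(Lambda)}, where the paper's argument is self-contained.

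Two technical points in your write-up need genuine attention, and both come down to the same issue: compact-flatness is a statement about the \emph{completed} tensor product, whereas flatness of $P^n/IP^n$ over $\Lambda/I$ and the identification $P^n=\varprojlim_I P^n/IP^n$ are statements about the ordinary one. To deduce flatness you need the comparison map $M\tensor_{\Lambda}P^n\mto M\hat{\tensor}_{\Lambda}P^n$ to be injective for finite $M$, and for $M=\Lambda/I$ its kernel is exactly $\overline{IP^n}/IP^n$; likewise your cofinality argument produces the closed submodules $\overline{IP^n}$, not $IP^n$. So you must show $IP^n$ is closed, e.g.\ by restricting to the cofinal family $I=\Jac(\Lambda)^m$ and using that these are generated by finitely many elements, so that $IP^n$ is a finite sum of compact sets. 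This is fixable, but as written it is a gap; the paper's proof avoids it entirely by never leaving the completed/compact setting except through the topological Nakayama lemma.
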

\begin{proof}
Assume that $\cmplx{Q}\wto\cmplx{P}$ is a quasi-isomorphism with
$\cmplx{Q}$ strictly perfect. Then the quasi-isomorphism
$$
\Lambda/\Jac(\Lambda)\tensor_{\Lambda}\cmplx{Q}=\Lambda/\Jac(\Lambda)\hat{\tensor}_{\Lambda}\cmplx{Q}
\wto \Lambda/\Jac(\Lambda)\hat{\tensor}_{\Lambda}\cmplx{P}
$$
shows that $\Lambda/\Jac(\Lambda)\hat{\tensor}_{\Lambda}\cmplx{P}$
has finite cohomology groups. Since $\Lambda/\Jac(\Lambda)$ is finite and $\Jac(\Lambda)$ is finitely generated, we can replace the completed tensor product by the usual tensor product.

Conversely, assume that
$\Lambda/\Jac(\Lambda)\tensor_{\Lambda}\cmplx{P}$ has finite
cohomology groups. Without loss of generality we may suppose that
$P^k=0$ for $k<0$ and $k>n$ with some $n\geq 0$. By assumption,
$\Lambda/\Jac(\Lambda)\tensor_{\Lambda} \HF^n(P)$ is finite.
By the topological Nakayama lemma we conclude that the compact
module $\HF^n(P)$ is finitely generated. We proceed by induction
over $n$ to prove the perfectness of $\cmplx{P}$. If $n=0$, we see that $P^0$ is
finitely generated. Using the lifting of idempotents in $\Lambda$, we conclude that $P^0$ is also projective. If $n>0$, we may choose a
homomorphism $f\colon\Lambda^k[-n]\mto \cmplx{P}$ such that
$\HF^n(f)$ is surjective. The cone of this morphism then satisfies
the induction hypothesis. Since the category of perfect complexes is closed under
extensions we conclude that $\cmplx{P}$ is perfect.
\end{proof}

We can now state the following criterion:

\begin{prop}\label{prop:localising bimodules}
Let $\Lambda$ and $\Lambda'$ be two adic $\Int_{\ell}$-algebras and $G$, $G'$, $H$, $H'$ be virtual pro-$\ell$-groups. Assume that $H$ and $H'$ are closed subgroups of $G$ and $G'$, respectively. Suppose that $\cmplx{K}$ is a complex in $\Lambda[[G]]^{\op}$-$\cat{SP}(\Lambda'[[G']])$ such that there exists a complex $\cmplx{L}$  in $\Lambda[[H]]^{\op}$-$\cat{SP}(\Lambda'[[H']])$ and a quasi-isomorphism of complexes of $\Lambda'[[H']]$-$\Lambda[[G]]$-bimodules
$$
\cmplx{L}\hat{\tensor}_{\Lambda[[H]]}\Lambda[[G]]\wto \cmplx{K}.
$$
Then
$$
\ringtransf_{\cmplx{K}}\colon \cat{PDG}^{\cont}(\Lambda[[G]])\mto \cat{PDG}^{\cont}(\Lambda'[[G']])
$$
restricts to
$$
\ringtransf_{\cmplx{K}}\colon\cat{PDG}^{\cont,w_H}(\Lambda[[G]])\mto \cat{PDG}^{\cont,w_{H'}}(\Lambda'[[G']]).
$$
\end{prop}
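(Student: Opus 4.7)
The task is to verify that for any $\cmplx{P}_\bullet=(\cmplx{P}_J)_{J\in\openideals_{\Lambda[[G]]}}$ in $\cat{PDG}^{\cont,w_H}(\Lambda[[G]])$, the image $\ringtransf_K(\cmplx{P}_\bullet)$ lies in $\cat{PDG}^{\cont,w_{H'}}(\Lambda'[[G']])$. By definition, this means that $\varprojlim_{I\in\openideals_{\Lambda'[[G']]}}\ringtransf_K(\cmplx{P}_\bullet)_I$ must be a perfect complex of $\Lambda'[[H']]$-modules. Using the level-wise identity stated after Definition~\ref{defn:change of ring functor} and passing to the limit in $I$, one identifies this inverse limit with the completed tensor product $\cmplx{K}\hat{\tensor}_{\Lambda[[G]]}\cmplx{P}$, where $\cmplx{P}=\varprojlim_J\cmplx{P}_J$; by hypothesis this $\cmplx{P}$ is a perfect complex of $\Lambda[[H]]$-modules.

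Exploiting the given quasi-isomorphism $\cmplx{L}\hat{\tensor}_{\Lambda[[H]]}\Lambda[[G]]\wto\cmplx{K}$ of $\Lambda'[[H']]$-$\Lambda[[G]]$-bimodule complexes together with associativity of the completed tensor product, I would obtain a natural map
$$\cmplx{L}\hat{\tensor}_{\Lambda[[H]]}\cmplx{P}\isomorph(\cmplx{L}\hat{\tensor}_{\Lambda[[H]]}\Lambda[[G]])\hat{\tensor}_{\Lambda[[G]]}\cmplx{P}\mto\cmplx{K}\hat{\tensor}_{\Lambda[[G]]}\cmplx{P},$$
which I claim is a quasi-isomorphism. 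Next, I would choose a strictly perfect complex $\cmplx{Q}$ of $\Lambda[[H]]$-modules together with a quasi-isomorphism $\cmplx{Q}\wto\cmplx{P}$. Because each $Q^m$ is finitely generated projective, $L^n\tensor_{\Lambda[[H]]}Q^m$ is a direct summand of $(L^n)^{\oplus k}$ for some $k$ and hence finitely generated projective over $\Lambda'[[H']]$, so $\cmplx{L}\tensor_{\Lambda[[H]]}\cmplx{Q}$ is strictly perfect as a complex of $\Lambda'[[H']]$-modules. Since the $Q^m$ are finitely presented, ordinary and completed tensor products agree on $\cmplx{Q}$, and the composition
$$\cmplx{L}\tensor_{\Lambda[[H]]}\cmplx{Q}\mto\cmplx{L}\hat{\tensor}_{\Lambda[[H]]}\cmplx{P}\mto\cmplx{K}\hat{\tensor}_{\Lambda[[G]]}\cmplx{P}$$
is then to be shown to be a quasi-isomorphism, exhibiting $\cmplx{K}\hat{\tensor}_{\Lambda[[G]]}\cmplx{P}$ as perfect over $\Lambda'[[H']]$.

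The main obstacle is justifying that the maps in the preceding displays are indeed quasi-isomorphisms. In the non-noetherian setting, completed tensor products with perfect (but not strictly perfect) complexes need not preserve quasi-isomorphisms, so one cannot merely invoke standard derived-category reasoning. The proper remedy is to carry out the verification level-wise using the compact-flatness of $\Lambda[[G]]$ as a right $\Lambda[[H]]$-module granted by Lemma~\ref{lem:compactflatness property} and the $DG$-flatness of the $\cmplx{P}_J$ built into the definition of $\cat{PDG}^{\cont}(\Lambda[[G]])$, and then to take the inverse limit over $J$; once this is done, the strictly perfect model $\cmplx{L}\tensor_{\Lambda[[H]]}\cmplx{Q}$ witnesses the required perfectness.
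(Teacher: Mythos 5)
Your proposal is correct and follows essentially the same route as the paper: choose a strictly perfect $\Lambda[[H]]$-model $\cmplx{Q}\wto\cmplx{P}$, use $\cmplx{L}\tensor_{\Lambda[[H]]}\cmplx{Q}$ as the strictly perfect witness over $\Lambda'[[H']]$, and justify the intermediate quasi-isomorphisms via the compact-flatness supplied by Lemma~\ref{lem:compactflatness property}. The only difference is that the paper first invokes Proposition~\ref{prop:embedding SP(Lambda) in PDG(Lambda)} to replace the object by a strictly perfect complex of $\Lambda[[G]]$-modules, which makes the compact-flatness of the terms of $\cmplx{P}$ immediate and avoids the level-wise inverse-limit bookkeeping you defer to the end.
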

\begin{proof}
According to Proposition~\ref{prop:embedding SP(Lambda) in PDG(Lambda)} it suffices to consider a strictly perfect complex $\cmplx{P}$ of $\Lambda[[G]]$-modules which is also perfect as complex of $\Lambda[[H]]$-modules. Hence, there exists a quasi-isomorphism $\cmplx{Q}\mto \cmplx{P}$ of complexes of $\Lambda[[H]]$-modules with $\cmplx{Q}$ strictly perfect. According to Lemma~\ref{lem:compactflatness property}, each $P^n$ is compact-flat as $\Lambda[[H]]$-module. Therefore, there exists a quasi-isomorphism of complexes of $\Lambda'[[H']]$-modules
$$
\cmplx{(L\tensor_{\Lambda[[H]]}Q)}\wto\cmplx{(L\hat{\tensor}_{\Lambda[[H]]}P)}\wto \cmplx{(K\tensor_{\Lambda[[G]]}P)}.
$$
Since $\cmplx{(L\tensor_{\Lambda[[H]]}Q)}$ is strictly perfect as complex of $\Lambda'[[H']]$-modules, we see that $\cmplx{\ringtransf_{\cmplx{K}}P}$ is in $\cat{PDG}^{\cont,w_{H'}}(\Lambda'[[G']])$.
\end{proof}

\begin{prop}\label{prop:example functors}
The following complexes $\cmplx{K}$ in $\Lambda[[G]]^\op$-$\cat{SP}(\Lambda'[[G']])$ satisfy the hypotheses of Proposition~\ref{prop:localising bimodules}:
\begin{enumerate}
    \item Assume $G=G'$, $H=H'$. For any complex $\cmplx{P}$ in $\Lambda[[G]]^\op$-$\cat{SP}(\Lambda')$ let $\cmplx{K}$ be the complex $\Lambda'[[G]]\tensor_{\Lambda'}\cmplx{P}$ in $\Lambda[[G]]^\op$-$\cat{SP}(\Lambda'[[G]])$ with  the right $G$-operation given by the diagonal action on both factors. This applies in particular for any complex $\cmplx{P}$ in $\Lambda^{\op}$-$\cat{SP}(\Lambda')$ equipped with the trivial $G$-operation.
    \item Assume that $G'$ is an open subgroup of $G$ and set $H'=H\cap G'$. Let $\Lambda=\Lambda'$ and let $\cmplx{K}$ be the complex concentrated in degree 0 given by the $\Lambda[[G']]$-$\Lambda[[G]]$-bimodule $\Lambda[[G]]$.
    \item Assume $\Lambda=\Lambda'$. Let $\alpha\colon G\mto G'$ be a continuous homomorphism such that $\alpha$ maps $H$ to $H'$ and induces a bijection of the sets $H\setminus G$ and $H'\setminus G'$. Let $\cmplx{K}$ be the $\Lambda[[G']]$-$\Lambda[[G]]$-bimodule $\Lambda[[G']]$.
\end{enumerate}
\end{prop}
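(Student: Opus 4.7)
The plan is to treat each of the three cases by exhibiting a candidate $\cmplx{L}$ in $\Lambda[[H]]^{\op}\text{-}\cat{SP}(\Lambda'[[H']])$ together with an explicit isomorphism of complexes of $\Lambda'[[H']]$-$\Lambda[[G]]$-bimodules $\cmplx{L}\hat{\tensor}_{\Lambda[[H]]}\Lambda[[G]]\isomorph\cmplx{K}$ (an isomorphism, not merely a quasi-isomorphism, will suffice in all three cases). The background facts used throughout are Lemma~\ref{lem:compactflatness property} (compact-flatness of $\Lambda[[G]]$ over $\Lambda[[H]]$, which makes the completed tensor product well-behaved) together with the observation that for an open subgroup $G'\subset G$ the ring $\Lambda[[G]]$ is free of finite rank as left $\Lambda[[G']]$-module.

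Cases~(2) and (3) are essentially formal. In Case~(2), the inclusion $G'\subset G$ being open yields an injection $H/H'\hookrightarrow G/G'$ of coset spaces, so that $H'=H\cap G'$ has finite index in $H$; I take $\cmplx{L}=\Lambda[[H]]$ as $\Lambda[[H']]$-$\Lambda[[H]]$-bimodule, which is strictly perfect over $\Lambda[[H']]$, and note the tautology $\Lambda[[H]]\hat{\tensor}_{\Lambda[[H]]}\Lambda[[G]]=\Lambda[[G]]=\cmplx{K}$. In Case~(3), I take $\cmplx{L}=\Lambda[[H']]$ with right $\Lambda[[H]]$-action through $\alpha|_H$, and I define the map to $\Lambda[[G']]$ on generators by $a\tensor g\mapsto a\alpha(g)$. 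Choosing representatives $\{s_i\}$ for the coset space $H\backslash G$, the hypothesis that $\alpha$ induces a bijection $H\backslash G\isomorph H'\backslash G'$ supplies corresponding representatives $\{\alpha(s_i)\}$ for $H'\backslash G'$, and the resulting completed direct-sum decompositions $\Lambda[[G]]=\bigoplus_i\Lambda[[H]]s_i$ and $\Lambda[[G']]=\bigoplus_i\Lambda[[H']]\alpha(s_i)$ show that the map is an isomorphism.

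Case~(1) requires more work. I set $\cmplx{L}=\Lambda'[[H]]\tensor_{\Lambda'}\cmplx{P}$, where $\cmplx{P}$ is viewed as a right $\Lambda[[H]]$-module by restriction of scalars, equipped with the left $\Lambda'[[H]]$-action on the first factor and the diagonal right $\Lambda[[H]]$-action; this is termwise strictly perfect over $\Lambda'[[H]]$ because each $P^n$ is finitely generated projective over $\Lambda'$. I then consider
\[
\Phi\colon\cmplx{L}\hat{\tensor}_{\Lambda[[H]]}\Lambda[[G]]\mto\cmplx{K},\qquad(a\tensor p)\tensor g\mapsto ag\tensor pg,
\]
which is well-defined because of the diagonal balance and is $\Lambda'[[G]]$-$\Lambda[[G]]$-bilinear. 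To construct an inverse, I fix representatives $\{s_i\}$ of $H\backslash G$, write $a\in\Lambda'[[G]]$ uniquely in the topologically convergent form $\sum_ib_is_i$ with $b_i\in\Lambda'[[H]]$, and send $a\tensor p\mapsto\sum_i(b_i\tensor ps_i^{-1})\tensor s_i$; the diagonal structure of $\cmplx{L}$ guarantees independence of the chosen representatives, and both compositions are identities by direct bookkeeping.

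The main obstacle is the verification in Case~(1) that the candidate inverse of $\Phi$ is well-defined on the completed tensor product, independent of the coset representatives, and actually inverts $\Phi$. This is the natural profinite generalisation of the classical shearing isomorphism $R[G]\tensor_R M\isomorph R[G]\tensor_R M$, $g\tensor m\mapsto g\tensor mg^{-1}$, converting the diagonal right $G$-action into the action on the first factor. The verification reduces, after passing to the finite quotients defining the inverse limits $\Lambda'[[G]]$ and $\Lambda[[G]]$, to the corresponding discrete identity for ordinary group algebras over Artinian rings, where it is a straightforward computation.
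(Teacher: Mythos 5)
Your proposal is correct and follows essentially the same route as the paper: in all three cases you choose the same complex $\cmplx{L}$ (namely $\Lambda'[[H]]\tensor_{\Lambda'}\cmplx{P}$ with the diagonal right $H$-action, $\Lambda[[H]]$, and $\Lambda[[H']]$, respectively) and exhibit the same comparison maps. The additional detail you supply — the explicit inverse of the shearing map in Case~(1) and the reduction to finite quotients, which the paper carries out explicitly only for Case~(3) — merely fills in verifications the paper leaves to the reader.
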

\begin{proof}
In the first example, one may choose $\cmplx{L}=\Lambda'[[H]]\tensor_{\Lambda'}\cmplx{P}$ with the diagonal right operation of $H$. The isomorphism $\cmplx{L}\hat{\tensor}_{\Lambda[[H]]}\Lambda[[G]]\mto \cmplx{K}$ is then induced by $h\tensor p\tensor g\mapsto hg\tensor pg$ for $h\in H$, $g\in G$ and $p\in P^n$. In the second example, $\cmplx{L}=\Lambda[[H]]$ will do the job. In the last example, choose $\cmplx{L}=\Lambda[[H']]$. The inclusion $\Lambda[[H']]\subset\Lambda[[G']]$ and the continuous ring homomorphism $\Lambda[[G]]\mto \Lambda[[G']]$ induced by $\alpha$ give rise to a morphism of $\Lambda[[H']]$-$\Lambda[[G]]$-bimodules
$$
f\colon\Lambda[[H']]\hat{\tensor}_{\Lambda[[H]]}\Lambda[[G]]\mto\Lambda[[G']].
$$
Let  $U'$ be any open normal subgroup of $G'$, $U=\alpha^{-1}(U)$ its preimage under $\alpha$. Then, as $\Lambda[H'/H'\cap U']$-modules, $\Lambda[H'/H'\cap U']\tensor_{\Lambda[H/H\cap U]}\Lambda[G/U]$ is freely generated by a choice of coset representatives of $UH\setminus G$ and $\Lambda[G'/U']$ is freely generated by the images of these representatives under $\alpha$. Hence, we conclude that $f$ is an isomorphism.
\end{proof}

Generalising \cite[Lemma~2.1]{CFKSV}, we can give a useful characterisation of the complexes in
$\cat{PDG}^{\cont,w_H}(\Lambda[[G]])$ if we further assume that
$H$ is normal in $G$. Under this condition, we find an open
pro-$\ell$-subgroup $K$ in $H$ which is normal in $G$ (take for
example the intersection of all $\ell$-Sylow subgroups of $G$ with
$H$).

\begin{prop}\label{prop:characterisation of relative category}
Let $\Lambda$ be an adic ring. Assume that $H$ is a closed subgroup of $G$ and that both $G$ and $H$ are virtual pro-$\ell$-groups. Let furthermore $K$ be an open
pro-$\ell$-subgroup of $H$ which is normal in $G$. For a complex
$\cmplx{P}=(\cmplx{P}_J)_{J\in\openideals_{\Lambda[[G]]}}$ in
$\cat{PDG}^{\cont}(\Lambda[[G]])$, the following assertions are
equivalent:
\begin{enumerate}
    \item $\cmplx{P}$ is in $\cat{PDG}^{\cont,w_H}(\Lambda[[G]])$
    \item $\ringtransf_{\Lambda/\Jac(\Lambda)[[G/K]]}(\cmplx{P})$ is
    in $\cat{PDG}^{\cont,w_{H/K}}(\Lambda/\Jac(\Lambda)[[G/K]])$,
    \item $\ringtransf_{\Lambda/\Jac(\Lambda)[[G/K]]}(\cmplx{P})$
    has finite cohomology groups.
\end{enumerate}
\end{prop}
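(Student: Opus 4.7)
The plan is to establish the implications $(1)\Rightarrow(2)\Rightarrow(3)\Rightarrow(1)$. For $(1)\Rightarrow(2)$, I apply Proposition~\ref{prop:localising bimodules} to the bimodule $\cmplx{K}=\Lambda/\Jac(\Lambda)[[G/K]]$ placed in degree $0$, taking $\cmplx{L}=\Lambda/\Jac(\Lambda)[[H/K]]$. Because $K$ is normal in $G$, the kernel of $\Lambda[[G]]\mto\Lambda/\Jac(\Lambda)[[G/K]]$ is generated as a left ideal by the kernel $I_K$ of $\Lambda[[H]]\mto\Lambda/\Jac(\Lambda)[[H/K]]$, and since $\Lambda/\Jac(\Lambda)[H/K]$ is finite the multiplication map
$$
\Lambda/\Jac(\Lambda)[[H/K]]\hat{\tensor}_{\Lambda[[H]]}\Lambda[[G]]\isomorph\Lambda/\Jac(\Lambda)[[G/K]]
$$
is an isomorphism of bimodules, verifying the hypothesis. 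The implication $(2)\Rightarrow(3)$ then follows because $\Lambda/\Jac(\Lambda)[H/K]$ is a finite ring ($\Lambda/\Jac(\Lambda)$ is finite by the adic hypothesis and $H/K$ is finite because $K$ is open in $H$), and perfect complexes over finite rings have finite cohomology.

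The main content is $(3)\Rightarrow(1)$. By Proposition~\ref{prop:embedding SP(Lambda) in PDG(Lambda)} I may assume $\cmplx{P}=F(\cmplx{P}')$ for a strictly perfect complex $\cmplx{P}'$ of $\Lambda[[G]]$-modules, and the task reduces to showing that $\cmplx{P}'$ is perfect as a complex of $\Lambda[[H]]$-modules. Each term of $\cmplx{P}'$ is compact-flat over $\Lambda[[H]]$ by Lemma~\ref{lem:compactflatness property}, so Lemma~\ref{lem: compactperfect=perfect} reduces the task to verifying that $\Lambda[[H]]/\Jac(\Lambda[[H]])\tensor_{\Lambda[[H]]}\cmplx{P}'$ has finite cohomology. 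The bimodule identification above yields an isomorphism of complexes
$$
\Lambda/\Jac(\Lambda)[[G/K]]\tensor_{\Lambda[[G]]}\cmplx{P}' \isomorph \Lambda[[H]]/I_K\tensor_{\Lambda[[H]]}\cmplx{P}',
$$
so hypothesis $(3)$ amounts to finiteness of the cohomology of the right-hand complex, call it $\cmplx{A}$, over the finite ring $\Lambda[[H]]/I_K$. Proposition~\ref{prop:l-admissible group rings are adic} ensures $I_K\subseteq\Jac(\Lambda[[H]])$, so $\Lambda[[H]]/\Jac(\Lambda[[H]])$ is a finite quotient of $\Lambda[[H]]/I_K$.

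The crucial last step uses compact-flatness of $\cmplx{P}'$ over $\Lambda[[H]]$ to replace underived by derived tensor products, giving in the derived category
$$
\Lambda[[H]]/\Jac(\Lambda[[H]])\tensor_{\Lambda[[H]]}\cmplx{P}'\isomorph \Lambda[[H]]/\Jac(\Lambda[[H]])\tensor^L_{\Lambda[[H]]/I_K}\cmplx{A}.
$$
The $E_2$-page of the associated base-change spectral sequence consists of $\operatorname{Tor}$-groups between finite modules over the finite ring $\Lambda[[H]]/I_K$ and is therefore finite in each bidegree; since $\cmplx{A}$ is cohomologically bounded, only finitely many bidegrees contribute to each total degree, so the abutment is finite. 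Lemma~\ref{lem: compactperfect=perfect} then concludes that $\cmplx{P}'$ is perfect as a $\Lambda[[H]]$-complex. The main obstacle throughout is this transition between underived and derived tensor products together with the spectral sequence bookkeeping, both of which hinge on compact-flatness and the containment $I_K\subseteq\Jac(\Lambda[[H]])$.
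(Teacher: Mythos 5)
Your proposal is correct and follows essentially the same route as the paper: both reduce to a strictly perfect complex, identify $\Lambda/\Jac(\Lambda)[[G/K]]\tensor_{\Lambda[[G]]}\cmplx{P}$ with $\Lambda[[H]]/I_K\tensor_{\Lambda[[H]]}\cmplx{P}$ using normality of $K$ and the containment $I_K\subseteq\Jac(\Lambda[[H]])$ from Proposition~\ref{prop:l-admissible group rings are adic}, and then conclude via compact-flatness (Lemma~\ref{lem:compactflatness property}) and Lemma~\ref{lem: compactperfect=perfect}. You merely make explicit (via the Tor spectral sequence over the finite ring $R=\Lambda/\Jac(\Lambda)[H/K]$) a step the paper dismisses as an immediate consequence.
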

\begin{proof}
Assume that $\cmplx{P}$ is a strictly perfect complex of
$\Lambda[[G]]$-modules.
It is a strictly bounded complex of compact-flat
$\Lambda[[H]]$-modules by Lemma~\ref{lem:compactflatness property}. Proposition~\ref{prop:l-admissible group
rings are adic} implies that
$$
\Lambda[[H]]/\Jac(\Lambda[[H]])\tensor_{\Lambda[[H]]}\cmplx{P}
=R/\Jac(R)\tensor_R\Lambda/\Jac(\Lambda)[[G/K]]\tensor_{\Lambda[[G]]}\cmplx{P}
$$
for the finite ring $R=\Lambda/\Jac(\Lambda)[H/K]$. Now the
equivalences in the statement of
Proposition~\ref{prop:characterisation of relative category} are
an immediate consequence of Lemma~\ref{lem:
compactperfect=perfect}.
\end{proof}

We can use similar arguments to prove the following result, which can be combined with Proposition~\ref{prop:localising bimodules}.

\begin{prop}\label{prop:Changing H by an open subgroup}
Let $H$ be a closed subgroup of $G$ such that both $G$ and $H$ are virtual pro-$\ell$-groups. Assume that $H'$ is an open subgroup of $H$. Then
$$
\cat{PDG}^{\cont,w_{H'}}(\Lambda)=\cat{PDG}^{\cont,w_H}(\Lambda).
$$
\end{prop}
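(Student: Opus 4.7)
The plan is to establish the two inclusions separately. The inclusion $\cat{PDG}^{\cont,w_{H}}(\Lambda[[G]]) \subseteq \cat{PDG}^{\cont,w_{H'}}(\Lambda[[G]])$ is straightforward: since $H'$ is open in the compact group $H$, a choice of right coset representatives for $H'\setminus H$ realises $\Lambda[[H]]$ as a finitely generated free left $\Lambda[[H']]$-module. Any strictly perfect complex of $\Lambda[[H]]$-modules therefore remains strictly perfect after restriction to $\Lambda[[H']]$, so perfectness of $\varprojlim \cmplx{P}_J$ over $\Lambda[[H]]$ implies perfectness over $\Lambda[[H']]$.

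For the reverse inclusion, I would first apply Proposition~\ref{prop:embedding SP(Lambda) in PDG(Lambda)} to reduce to the case of a strictly perfect complex $\cmplx{P}$ of $\Lambda[[G]]$-modules that is perfect as a complex of $\Lambda[[H']]$-modules. By Lemma~\ref{lem:compactflatness property}, such a $\cmplx{P}$ is a strictly bounded complex of compact-flat modules over both $\Lambda[[H]]$ and $\Lambda[[H']]$. A mild strengthening of Lemma~\ref{lem: compactperfect=perfect}, extracted from its proof, says that perfectness of $\cmplx{P}$ over an adic ring $\Omega$ is equivalent to finiteness of the cohomology of $\bar{\Omega}\tensor_{\Omega}\cmplx{P}$ for any finite quotient $\bar{\Omega}$ whose kernel is contained in $\Jac(\Omega)$, since the topological Nakayama lemma applies to any open ideal in the Jacobson radical.

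It therefore suffices to exhibit a single finite quotient that simultaneously detects perfectness over $\Lambda[[H]]$ and $\Lambda[[H']]$. I would start from an open pro-$\ell$ subgroup of $H'$ (for instance $H'$ intersected with an open pro-$\ell$ subgroup of $H$) and let $V$ be the intersection of its finitely many $H$-conjugates; this is an open normal pro-$\ell$ subgroup of $H$ contained in $H'$ and hence also normal in $H'$. Setting $R := \Lambda/\Jac(\Lambda)[H/V]$ and $R' := \Lambda/\Jac(\Lambda)[H'/V]$, Proposition~\ref{prop:l-admissible group rings are adic} ensures that the kernels of $\Lambda[[H]]\mto R$ and $\Lambda[[H']]\mto R'$ lie in the respective Jacobson radicals. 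A rank count over $\Lambda/\Jac(\Lambda)$ identifies $R' \tensor_{\Lambda[[H']]} \Lambda[[H]] \isomorph R$ as right $\Lambda[[H]]$-modules, yielding an identification
$$
R \tensor_{\Lambda[[H]]} \cmplx{P} \;=\; R' \tensor_{\Lambda[[H']]} \cmplx{P}
$$
of bounded complexes of abelian groups. Since the two sides share the same cohomology, $\cmplx{P}$ is perfect over $\Lambda[[H]]$ if and only if it is perfect over $\Lambda[[H']]$. The main obstacle is the construction of $V$ together with the mild extension of Lemma~\ref{lem: compactperfect=perfect}, but neither step is serious.
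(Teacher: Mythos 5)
Your proof is correct and follows essentially the same route as the paper's: the easy inclusion via freeness of $\Lambda[[H]]$ over $\Lambda[[H']]$, and the reverse inclusion via Lemma~\ref{lem:compactflatness property}, Proposition~\ref{prop:l-admissible group rings are adic} and (a mild extension of) Lemma~\ref{lem: compactperfect=perfect} applied to a finite quotient detecting perfectness over both group rings. The only cosmetic difference is that the paper shrinks $H'$ itself to an open normal pro-$\ell$ subgroup of $H$ (legitimate by the easy inclusion) so that $\Lambda[[H']]/\Jac(\Lambda[[H']])=\Lambda/\Jac(\Lambda)$, whereas you keep $H'$ and introduce the auxiliary subgroup $V$; both variants implicitly rely on the same strengthening of Lemma~\ref{lem: compactperfect=perfect} that you make explicit.
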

\begin{proof}
Since $\Lambda[[H]]$ is a finitely generated free $\Lambda[[H']]$ module, it is clear that every perfect complex of $\Lambda[[H]]$-modules is also perfect as complex of $\Lambda[[H']]$-modules. For the other implication we may shrink $H'$ and assume that it is pro-$\ell$, normal and open in $H$. By Proposition~\ref{prop:l-admissible group rings are adic} we conclude
\begin{align*}
\Lambda[[H']]/\Jac(\Lambda[[H']])&=\Lambda/\Jac(\Lambda),\\ \Lambda[[H]]/\Jac(\Lambda[[H]])&=\left(\Lambda/\Jac(\Lambda)\right)[H/H']/\Jac\left((\Lambda/\Jac(\Lambda))[H/H']\right).
\end{align*}
Let $\cmplx{P}$ be a strictly perfect complex of $\Lambda[[G]]$-modules which is also perfect as complex of $\Lambda[[H']]$-modules. Then Lemma~\ref{lem: compactperfect=perfect} implies that the complexes
$$
\Lambda/\Jac(\Lambda)\tensor_{\Lambda[[H']]}\cmplx{P}\isomorph \Lambda/\Jac(\Lambda)[H/H']\tensor_{\Lambda[[H]]}\cmplx{P}
$$
have finite cohomology groups and that $\cmplx{P}$ is perfect as complex of $\Lambda[[H]]$-modules.
\end{proof}

\section{Perfect Complexes of Adic Sheaves}\label{sec:perf
complexes}

We let $\FF$ denote a finite field of characteristic $p$, with
$q=p^{\nu}$ elements. Furthermore, we fix an algebraic closure
$\algc{\FF}$ of $\FF$.

For any scheme $X$ in the category $\cat{Sch}_{\FF}$ of
 $\FF$-schemes of finite type and any adic ring $\Lambda$
we introduced in \cite{Witte:PhD} a Waldhausen category
$\cat{PDG}^{\cont}(X,\Lambda)$ of perfect complexes of adic
sheaves on $X$. Below, we will recall the definition.

\begin{defn}
Let $R$ be a finite ring and $X$ be a scheme in
$\cat{Sch}_{\FF}$. A complex $\cmplx{\sheaf{F}}$ of \'etale
sheaves of left $R$-modules on $X$ is called \emph{strictly
perfect} if it is strictly bounded and each $\sheaf{F}^n$ is
constructible and flat. A complex is called \emph{perfect} if it
is quasi-isomorphic to a strictly perfect complex. It is
\emph{$DG$-flat} if for each geometric point of $X$, the complex
of stalks is $DG$-flat.
\end{defn}

\begin{defn}\label{defn:PDGcont(X,Lambda)}
Let $X$ be a scheme in $\cat{Sch}_{\FF}$ and let $\Lambda$ be an
adic ring. The \emph{category of perfect complexes of adic
sheaves} $\cat{PDG}^{\cont}(X,\Lambda)$ is the following
Waldhausen category. The objects of $\cat{PDG}^{\cont}(X,\Lambda)$
are inverse systems $(\cmplx{\sheaf{F}}_I)_{I\in
\openideals_{\Lambda}}$ such that:
\begin{enumerate}
\item for each $I\in\openideals_{\Lambda}$, $\cmplx{\sheaf{F}}_I$
is a perfect and $DG$-flat complex of \'etale sheaves of $\Lambda/I$-modules on $X$,

\item for each $I\subset J\in\openideals_{\Lambda}$, the
transition morphism
$$
\varphi_{IJ}:\cmplx{\sheaf{F}}_I\mto \cmplx{\sheaf{F}}_J
$$
of the system induces an isomorphism
$$
\Lambda/J\tensor_{\Lambda/I}\cmplx{\sheaf{F}}_I\wto
\cmplx{\sheaf{F}}_J.
$$
\end{enumerate}
Weak equivalences and cofibrations are those morphisms of inverse
systems that are weak equivalences or cofibrations for each
$I\in\openideals_{\Lambda}$, respectively.
\end{defn}

If $\Lambda=\Int_{\ell}$, then the subcategory of complexes
concentrated in degree $0$ of $\cat{PDG}^{\cont}(X,\Int_{\ell})$
corresponds precisely to the exact category of flat constructible
$\ell$-adic sheaves on $X$ in the sense of \cite[Expos\'e~VI,
Definition~1.1.1]{SGA5}. In this sense, we recover the classical
theory.

If $f\colon Y\mto X$ is a morphism of schemes, we define a
Waldhausen exact functor
$$
f^*\colon\cat{PDG}^{\cont}(X,\Lambda)\mto\cat{PDG}^{\cont}(Y,\Lambda),\qquad
(\cmplx{\sheaf{F}}_I)_{I\in\openideals_{\Lambda}}\mapsto
(f^*\cmplx{\sheaf{F}}_I)_{I\in\openideals_{\Lambda}}.
$$
We will also need a Waldhausen exact functor that computes higher
direct images with proper support. For the purposes of this
article it suffices to use the following construction.

\begin{defn}
Let $f\colon X\mto Y$ be a morphism of separated schemes in $\cat{Sch}_{\FF}$.
Then there exists a factorisation $f=p\comp j$ with $j\colon
X\oimm X'$ an open immersion and $p\colon X'\mto Y$ a proper
morphism. Let $\cmplx{\God}_{X'}\sheaf{K}$ denote the Godement
resolution of a complex $\cmplx{\sheaf{K}}$ of abelian \'etale
sheaves on $X'$. Define
\begin{align*}
\RDer f_!\colon \cat{PDG}^{\cont}(X,\Lambda)&\mto
\cat{PDG}^{\cont}(Y,\Lambda)\\
(\cmplx{\sheaf{F}}_I)_{I\in\openideals_{\Lambda}}&\mapsto(f_*\cmplx{\God}_{X'}
 j_!\sheaf{F}_I)_{I\in\openideals_{\Lambda}}
\end{align*}
\end{defn}

Note that this definition depends on the particular choice of the
compactification $f=p\comp j$. However, all possible choices will
induce the same homomorphisms
$$
\KTh_n(\RDer f_!)\colon \KTh_n(\cat{PDG}^{\cont}(X,\Lambda))\mto
\KTh_n(\cat{PDG}^{\cont}(Y,\Lambda))
$$
and this is all we need.

\begin{defn}
Let $X$ be a separated scheme in $\cat{Sch}_{\FF}$ and write $h\colon
X\mto \Spec \FF$ for the structure map, $s\colon \Spec
\algc{\FF}\mto \Spec \FF$ for the map induced by the embedding
into the algebraic closure. We define the Waldhausen exact functors
$$
\RDer\Sectc(\algc{X},-),\RDer\Sectc(X,-)\colon\cat{PDG}^{\cont}(X,\Lambda)\mto\cat{PDG}^{\cont}(\Lambda)
$$
to be the composition of
$$
\RDer h_!\colon \cat{PDG}^{\cont}(X,\Lambda)\mto
\cat{PDG}^{\cont}(\Spec \FF,\Lambda)
$$
with the section functors
\begin{align*}
\cat{PDG}^{\cont}(\Spec
\FF,\Lambda)&\mto\cat{PDG}^{\cont}(\Lambda), \\
(\cmplx{\sheaf{F}}_{I})_{I\in\openideals_{\Lambda}}&\mto
(\Sect(\Spec
\algc{\FF},s^*\cmplx{\sheaf{F}}_I))_{I\in\openideals_{\Lambda}},\\
(\cmplx{\sheaf{F}}_{I})_{I\in\openideals_{\Lambda}}&\mto
(\Sect(\Spec
\FF,\cmplx{\sheaf{F}}_I))_{I\in\openideals_{\Lambda}},
\end{align*}
respectively.
\end{defn}

\begin{defn}
We let $\Frob_{\FF}\in\Gal(\algc{\FF}/\FF)$ denote the \emph{geometric Frobenius} of $\FF$, i.\,e.\ if $\FF$ has $q$ elements and $x\in\algc{\FF}$, then $\Frob_{\FF}(x)=x^{\frac{1}{q}}$.
\end{defn}

Clearly, $\Frob_{\FF}$ operates on $\RDer\Sectc(\algc{X},\cmplx{\sheaf{F}})$.

\begin{prop}\label{prop:fundamental exact sequence}
Let $X$ be a separated scheme in $\cat{Sch}_{\FF}$. The following sequence is exact in $\cat{PDG}^{\cont}(X,\Lambda)$:
$$
0\mto\RDer\Sectc(X,\cmplx{\sheaf{F}})\mto\RDer\Sectc(\algc{X},\cmplx{\sheaf{F}})\xrightarrow{\id-\Frob_{\FF}}\RDer\Sectc(\algc{X},\cmplx{\sheaf{F}})\mto 0.
$$
\end{prop}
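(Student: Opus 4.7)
My plan is to show the sequence is exact in $\cat{PDG}^{\cont}(\Lambda)$ by verifying it level-by-level in $I\in\openideals_{\Lambda}$ and term-by-term in each cohomological degree, then interpreting the required exactness via the cohomology of $\Gal(\algc{\FF}/\FF)\isomorph\widehat{\Int}$. First I would reduce to the case $X=\Spec\FF$: both functors factor through $\RDer h_!:\cat{PDG}^{\cont}(X,\Lambda)\mto\cat{PDG}^{\cont}(\Spec\FF,\Lambda)$, so it suffices to work with the complex $\cmplx{\sheaf{G}}_I = f_*\cmplx{\God}_{X'}j_!\sheaf{F}_I$ on $\Spec\FF$ for each $I$.

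Under the equivalence between \'etale sheaves on $\Spec\FF$ and continuous discrete $\Gal(\algc{\FF}/\FF)$-modules, the term $\sheaf{G}^n_I$ corresponds to a torsion module $M^n_I$ satisfying $\Sect(\Spec\FF,\sheaf{G}^n_I)=(M^n_I)^{\Gal}$ and $\Sect(\Spec\algc{\FF},s^*\sheaf{G}^n_I)=M^n_I$. Thus the assertion reduces to the exactness of
$$
0\mto (M^n_I)^{\Gal}\mto M^n_I\xrightarrow{\id-\Frob_{\FF}}M^n_I\mto 0
$$
for every $n$ and $I$. Exactness at the first two spots is immediate, because $\Frob_{\FF}$ topologically generates $\Gal(\algc{\FF}/\FF)$, so an element of a continuous discrete module is fixed by $\Frob_{\FF}$ precisely when it is fixed by the whole group.

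The main obstacle is surjectivity of $\id-\Frob_{\FF}$, equivalently the vanishing of $\HF^1(\Gal(\algc{\FF}/\FF),M^n_I)$. Here I would exploit that each Godement sheaf $\God^n_{X'}(j_!\sheaf{F}_I)$ is flabby, and that $f_*$ preserves flabbiness, so $\sheaf{G}^n_I$ is acyclic for \'etale cohomology on $\Spec\FF$; in particular $\HF^1(\Spec\FF,\sheaf{G}^n_I)=0$. Combined with the standard computation of continuous cohomology of $\widehat{\Int}$ via the two-term complex $[M\xrightarrow{\id-\Frob_{\FF}}M]$, yielding $\HF^0=\ker(\id-\Frob_{\FF})$ and $\HF^1=\coker(\id-\Frob_{\FF})$ for any continuous discrete $\widehat{\Int}$-module $M$ (a consequence of $\cd(\widehat{\Int})=1$ on torsion coefficients), this gives the required surjectivity.

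The level-wise and term-wise exact sequences are functorial in the complex direction and compatible with the transition maps $\varphi_{IJ}$ of the inverse systems, so they assemble into the claimed short exact sequence in $\cat{PDG}^{\cont}(\Lambda)$.
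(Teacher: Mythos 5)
Your argument is correct and, since the paper itself only cites \cite[Proposition~6.1.2]{Witte:PhD} and remarks that the weaker statement (the cone of $\id-\Frob_{\FF}$ computes $\RDer\Sectc(X,-)[1]$) follows from Hochschild--Serre, your proposal actually supplies the proof of the stronger, strictly term-wise assertion that the proposition claims. The key point, which Hochschild--Serre alone does not give, is that the sequence is short exact \emph{as complexes}, not merely a distinguished triangle; you obtain this by exploiting the specific Godement model: each term of $\RDer h_!\cmplx{\sheaf{F}}_I$ is flasque on $\Spec\FF$ (Godement sheaves are flasque and proper pushforward preserves flasqueness), hence has vanishing $\HF^1(\Gal(\algc{\FF}/\FF),-)$ of its stalk, which is exactly the cokernel of $\id-\Frob_{\FF}$. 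This is the right mechanism and is presumably what the cited thesis does. One caveat: the identification $\HF^1(\widehat{\Int},M)\isomorph\coker(\id-\Frob_{\FF})$ does \emph{not} hold for an arbitrary continuous discrete $\widehat{\Int}$-module (e.g.\ $M=\Int$ with trivial action has $\HF^1=0$ but $\coker(\id-\id)=\Int$); it requires $M$ to be torsion, as your parenthetical about $\cd(\widehat{\Int})$ already hints. You should state this hypothesis explicitly and note that it is satisfied here because each $M^n_I$ is a module over the finite ring $\Lambda/I$. With that qualification the proof is complete: injectivity and exactness at the middle term follow from $\Frob_{\FF}$ topologically generating the Galois group of the discrete module, surjectivity from flasqueness, and the whole construction is visibly compatible with the transition maps $\varphi_{IJ}$.
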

\begin{proof}
See \cite[Proposition~6.1.2]{Witte:PhD}.
(In fact, all that we will need later on is that the cone of $\id-\Frob_{\FF}$ is quasi-isomorphic to $\RDer\Sectc(X,\cmplx{\sheaf{F}})$ shifted by one, which is a well-known consequence of the Hochschild-Serre spectral sequence.)
\end{proof}

The definition of $\ringtransf_{\cmplx{M}}$ extends to
$\cat{PDG}^{\cont}(X,\Lambda)$ as follows.

\begin{defn}
For two adic rings $\Lambda$ and $\Lambda'$ we
let $\Lambda^{\op}$-$\cat{SP}(X,\Lambda')$ the Waldhausen category of strictly bounded complexes $(\cmplx{\sheaf{F}}_J)_{J\in\openideals_{\Lambda'}}$ in $\cat{PDG}^{\cont}(X,\Lambda')$ with each $\sheaf{F}_{J}^n$ a sheaf of $\Lambda'/J$-$\Lambda$-bimodules, constructible and flat as sheaf of $\Lambda'/J$-modules. The transition maps in the system $(\cmplx{\sheaf{F}}_J)_{J\in\openideals_{\Lambda'}}$ and the boundary maps of the complexes are supposed to be compatible with the right $\Lambda$-structure.
\end{defn}

In particular, a complex $\cmplx{M}$ in $\Lambda^\op$-$\cat{SP}(\Lambda')$ can be identified with the complex of constant sheaves $(\Lambda'/I\tensor_{\Lambda'}M)_{I\in \openideals_{\Lambda'}}$ in $\Lambda^\op$-$\cat{SP}(X,\Lambda')$.

\begin{defn}
For $(\cmplx{\sheaf{F}}_I)_{I\in\openideals_{\Lambda}}\in
\cat{PDG}^{\cont}(X,\Lambda)$ and $\cmplx{\sheaf{K}}\in
\Lambda^{\op}\text{-}\cat{SP}(X,\Lambda')$ we set
$$
\ringtransf_{\cmplx{\sheaf{K}}}\left((\cmplx{\sheaf{F}}_I)_{I\in\openideals_{\Lambda}}\right)=
(\varprojlim_{J\in\openideals_{\Lambda}}
\cmplx{(\sheaf{K}_I\tensor_{\Lambda}\sheaf{F}_{J})})_{I\in\openideals_{\Lambda'}}
$$
and obtain a Waldhausen exact functor
$$
\ringtransf_{\cmplx{\sheaf{K}}}\colon
\cat{PDG}^{\cont}(X,\Lambda)\mto\cat{PDG}^{\cont}(X,\Lambda').
$$
\end{defn}

\begin{prop}\label{prop:compatibility of change of rings}
Let $X$ be a separated scheme in $\cat{Sch}_{\FF}$ and let $\cmplx{M}$ be a complex in
$\Lambda^{\op}\text{-}\cat{SP}(\Lambda')$. The natural morphisms
\begin{align*}
\ringtransf_{\cmplx{M}}\RDer\Sectc(X,\cmplx{\sheaf{F}})&\mto \RDer\Sectc(X,\ringtransf_{\cmplx{M}}\cmplx{\sheaf{F}})\\
\ringtransf_{\cmplx{M}}\RDer\Sectc(\algc{X},\cmplx{\sheaf{F}})&\mto \RDer\Sectc(\algc{X},\ringtransf_{\cmplx{M}}\cmplx{\sheaf{F}})
\end{align*}
are quasi-isomorphisms.
\end{prop}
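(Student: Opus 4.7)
The plan is to reduce the statement to the classical projection formula for compactly supported cohomology at finite level. Both sides of each asserted quasi-isomorphism lie in $\cat{PDG}^{\cont}(\Lambda')$, and a morphism there is a weak equivalence precisely when it is a quasi-isomorphism at each component $I\in\openideals_{\Lambda'}$. I would therefore fix such an $I$ and set $\sheaf{M}_I=\Lambda'/I\tensor_{\Lambda'}\cmplx{M}$; since each term of $\sheaf{M}_I$ is finite over $\Lambda'/I$, the continuous right $\Lambda$-action on it factors through some $R=\Lambda/J_0$ with $J_0\in\openideals_{\Lambda}$, and for every $J\subset J_0$ in $\openideals_{\Lambda}$ one has
$$
\sheaf{M}_I\tensor_{\Lambda}\sheaf{F}_J=\sheaf{M}_I\tensor_{R}\sheaf{F}_{J_0}.
$$
Hence the inverse limit defining $(\ringtransf_{\cmplx{M}}\cmplx{\sheaf{F}})_I$ stabilises at $J_0$, and the claim reduces to showing that the natural comparison map
$$
\sheaf{M}_I\tensor_{R}\RDer\Sectc(X,\cmplx{\sheaf{F}}_{J_0})\mto\RDer\Sectc(X,\sheaf{M}_I\tensor_{R}\cmplx{\sheaf{F}}_{J_0})
$$
and its $\algc{X}$-analogue are quasi-isomorphisms for the finite ring $R$.

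Next I would unfold $\RDer\Sectc=\Sect(\Spec\FF,-)\comp h_*\comp\cmplx{\God}_{X'}\comp j_!$ for a chosen compactification $h=p\comp j$ of the structure morphism $X\mto\Spec\FF$, and verify compatibility with the functor $\sheaf{M}_I\tensor_R(-)$ stage by stage. The extension-by-zero $j_!$ commutes strictly with $\sheaf{M}_I\tensor_R(-)$ since both are computed stalk-wise. The Godement resolution is also built stalk-wise from the standard cosimplicial construction, so tensoring it with the strictly bounded complex $\sheaf{M}_I$ of flat $\Lambda'/I$-modules preserves quasi-isomorphisms. The only nontrivial step is global sections: one needs
$$
\Sect(X',\sheaf{M}_I\tensor_R\cmplx{\sheaf{G}})\wto\sheaf{M}_I\tensor_R\Sect(X',\cmplx{\sheaf{G}})
$$
for the flabby complex $\cmplx{\sheaf{G}}=\cmplx{\God}_{X'}j_!\sheaf{F}_{J_0}$. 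This I would handle by a cofibre-sequence induction on the length of $\sheaf{M}_I$, using its stupid filtration, to reduce to the case where $\sheaf{M}_I$ is concentrated in a single degree, and then exhibit that single term as a summand of a finite free $\Lambda'/I$-module so that the identity collapses to the trivial free-module case by additivity of sections and tensor products. For the $\algc{X}$-variant the additional input is that $s^*$ is exact and commutes with constant-sheaf tensors.

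The main obstacle will be executing the reduction to finite free summands without destroying the right $R$-action used in forming $\sheaf{M}_I\tensor_R(-)$: an ambient free $\Lambda'/I$-module containing a term of $\sheaf{M}_I$ as a direct summand need not carry a compatible right $\Lambda$-action, so the splitting argument must be performed inside $\Lambda^{\op}\text{-}\cat{SP}(X,\Lambda')$, and the right structure only forgotten at the very last step, when the finite-level projection formula is applied. Once this bookkeeping is in order, the remaining checks are entirely formal; a detailed execution of the same strategy, at the level of Waldhausen exact functors on $\cat{PDG}^{\cont}$-categories, can be found in \cite{Witte:PhD}.
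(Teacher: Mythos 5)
Your overall architecture --- fix $I\in\openideals_{\Lambda'}$, use that the limit over $J\in\openideals_{\Lambda}$ stabilises at a finite quotient $R=\Lambda/J_0$, and then check compatibility of $\sheaf{M}_I\tensor_R(-)$ with each stage $j_!$, $\God_{X'}$, $\Sect$ of the chosen model for $\RDer\Sectc$ --- is the right one, and it is essentially what the reference given in the paper (\cite[Proposition~5.5.7]{Witte:PhD}) carries out. The passage to finite level, the treatment of $j_!$, and the reduction to a single term of $\sheaf{M}_I$ via the stupid filtration are all fine.

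The step that fails is the final one. To compute $P\tensor_R(-)$ for a single term $P$ of $\sheaf{M}_I$ by additivity, you must decompose $P$ in the variable over which the tensor product is taken, i.e.\ as a \emph{right} $R$-module (or as a bimodule); exhibiting $P$ as a direct summand of a finite free \emph{left} $\Lambda'/I$-module is invisible to the functor $P\tensor_R(-)$, and since $P$ is only assumed projective on the left it is in general not a summand of a free right $R$-module or of a free bimodule. Your proposed remedy of performing the splitting inside $\Lambda^{\op}\text{-}\cat{SP}(X,\Lambda')$ therefore does not get off the ground. The standard fix is simpler: because $\Lambda'/I$ is finite, $P$ is a finite set, hence finitely presented as a right $R$-module, so $P\tensor_R(-)$ commutes with arbitrary direct products. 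Since the Godement sheaves $\God_{X'}^n(j_!\cmplx{\sheaf{F}}_{J_0})$ and their global sections are built entirely out of products of stalks, the comparison maps $P\tensor_R\God_{X'}^n(\cdots)\mto\God_{X'}^n(P\tensor_R\cdots)$ and $P\tensor_R\Sect(X',\cdots)\mto\Sect(X',P\tensor_R\cdots)$ are isomorphisms on the nose (note also that the natural map points in this direction, opposite to the one you wrote), and flasqueness is preserved, so the resulting complex still computes $\RDer\Sectc$. With that substitution the rest of your outline, including the $\algc{X}$ variant, goes through.
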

\begin{proof}
This is straightforward. See \cite[Proposition~5.5.7]{Witte:PhD}.
\end{proof}

\section{Adic Sheaves Induced by Coverings}\label{sec:adic sheaves induced by coverings}

As before, we let $\FF$ be a finite field
and $X$ an $\FF$-scheme of finite type. Recall that for any finite \'etale map $h\colon Y\mto X$ and any abelian \'etale sheaf $\sheaf{F}$ on $X$, $h_!h^*\sheaf{F}$ is the sheaf associated to the presheaf
$$
U\mapsto\bigoplus_{\varphi\in\Hom_X(U,Y)}\sheaf{F}(U)
$$
with the transition maps
\begin{align*}
\bigoplus_{\psi\in \Hom_X(V,Y)}\sheaf{F}(V)&\mto \bigoplus_{\varphi\in \Hom_X(U,Y)}\sheaf{F}(U),\\
(x_\psi)&\mapsto \big(\sum_{\varphi=\psi\comp \alpha}\sheaf{F}(\alpha)(x_\psi)\big)
\end{align*}
for $\alpha \colon U\mto V$. If $h$ is a finite principal covering with Galois group $G$, then the right action of $G$ on $Y$ induces a right action on $\Hom_X(U,Y)$ and hence, a left action on $h_!h^*\sheaf{F}$ by permutation of the components. The stalk at a geometric point $\xi$ of $X$ is given by
$$
(h_!h^*\sheaf{F})_{\xi}=\bigoplus_{\varphi\in\Hom_X(\xi,Y)}\sheaf{F}_{\xi}.
$$
Since $Y$ is finite over $X$, the set $\Hom_X(\xi,Y)$ is nonempty. The choice of any element in $\Hom_X(\xi,Y)$ induces an isomorphism of $G$-sets
$$
\Hom_X(\xi,Y)\isomorph \Hom_Y(\xi,Y\times_X Y)\isomorph \Hom_Y(\xi,\bigsqcup_{g\in G} Y)\isomorph G
$$
and hence, a $\Int[G]$-isomorphism
$$
(h_!h^*\sheaf{F})_\xi\isomorph \Int[G]\tensor_{\Int}\sheaf{F}_\xi.
$$

Consider an adic
$\Int_{\ell}$-algebra $\Lambda$ and let $(f\colon Y\mto
X,G)$ be a virtual pro-$\ell$-covering of $X$.

\begin{defn}
For $\cmplx{\sheaf{F}}\in \cat{PDG}^{\cont}(X,\Lambda)$ we set
$$
f_!f^*\cmplx{\sheaf{F}}=(\varprojlim_{I\in\openideals_{\Lambda}}\varprojlim_{U\in
\NS_{G}}\Lambda[[G]]/J\tensor_{\Lambda[[G]]}{f_U}_!f_U^*\cmplx{\sheaf{F}_I})_{J\in\openideals_{\Lambda[[G]]}}
$$
\end{defn}

Again, we note that for each $J\in \openideals_{\Lambda[[G]]}$, there exists
an $I_0\in \openideals_{\Lambda}$ and an $U_0\in\NS_{G}$ such
that $\Lambda[[G]]/J$ is a right $\Lambda/I_0[G/U_0]$-module and
such that
$$
\cmplx{(f_!f^*\sheaf{F})}_J\isomorph
\Lambda[[G]]/J\tensor_{\Lambda/I_0[G/U_0]}f_{U_0!}f_{U_0}^*\cmplx{\sheaf{F}}_{I_0}.
$$

\begin{prop}
For any complex $\cmplx{\sheaf{F}}$ in
$\cat{PDG}^{\cont}(X,\Lambda)$, $f_!f^*\cmplx{\sheaf{F}}$ is a
complex in $\cat{PDG}^{\cont}(X,\Lambda[[G]])$. Moreover, the
functor
$$
f_!f^*\colon \cat{PDG}^{\cont}(X,\Lambda)\mto
\cat{PDG}^{\cont}(X,\Lambda[[G]])
$$
is Waldhausen exact.
\end{prop}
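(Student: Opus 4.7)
The plan is to reduce everything to the concrete description recorded just above the statement: for each $J\in\openideals_{\Lambda[[G]]}$ one may choose $I_0\in\openideals_{\Lambda}$ and $U_0\in\cat{NS}(G)$ such that $\Lambda[[G]]/J$ is naturally a $\Lambda/I_0[G/U_0]$-algebra and
$$
\cmplx{(f_!f^*\sheaf{F})}_J\isomorph \Lambda[[G]]/J\tensor_{\Lambda/I_0[G/U_0]} f_{U_0!}f_{U_0}^*\cmplx{\sheaf{F}}_{I_0}.
$$
The heart of the argument is a single stalk calculation: since $f_{U_0}$ is a finite principal covering with Galois group $G/U_0$, the discussion at the start of Section~\ref{sec:adic sheaves induced by coverings} furnishes, for every geometric point $\xi$ of $X$ (after choosing a lift in $Y_{U_0}$), an isomorphism of $\Lambda/I_0[G/U_0]$-modules
$(f_{U_0!}f_{U_0}^*\sheaf{F}^n_{I_0})_\xi\isomorph \Lambda/I_0[G/U_0]\tensor_{\Lambda/I_0}\sheaf{F}^n_{I_0,\xi}$, whence the change-of-rings collapse yields
$$
(\sheaf{F}^n_J)_{\xi}\isomorph \Lambda[[G]]/J\tensor_{\Lambda/I_0}\sheaf{F}^n_{I_0,\xi}.
$$

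From this identification I would deduce each part of the definition of $\cat{PDG}^{\cont}(X,\Lambda[[G]])$. First, each $\sheaf{F}^n_J$ is a flat constructible $\Lambda[[G]]/J$-sheaf: constructibility is clear from the construction, and flatness follows because the flat $\Lambda/I_0$-module $\sheaf{F}^n_{I_0,\xi}$ is a filtered colimit of free modules, so the stalk is a filtered colimit of free $\Lambda[[G]]/J$-modules. Second, $DG$-flatness: for any acyclic complex $\cmplx{N}$ of right $\Lambda[[G]]/J$-modules,
$$
\cmplx{N}\tensor_{\Lambda[[G]]/J}\cmplx{(f_!f^*\sheaf{F})}_{J,\xi}\isomorph \cmplx{N}\tensor_{\Lambda/I_0}\cmplx{\sheaf{F}}_{I_0,\xi}
$$
is acyclic by the $DG$-flatness of $\cmplx{\sheaf{F}}_{I_0,\xi}$ over $\Lambda/I_0$. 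Third, the required transition isomorphism $\Lambda[[G]]/J'\tensor_{\Lambda[[G]]/J}\cmplx{(f_!f^*\sheaf{F})}_J\wto \cmplx{(f_!f^*\sheaf{F})}_{J'}$ for $J\subset J'$ is immediate, because the pair $(I_0,U_0)$ chosen for $J$ also works for the larger $J'$ and tensor products are transitive.

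Waldhausen exactness is then formal: the functor decomposes termwise as the exact finite \'etale pushforward--pullback $f_{U_0!}f_{U_0}^*$ on sheaves, followed by tensoring with $\Lambda[[G]]/J$. Weak equivalences (termwise quasi-isomorphisms) survive the tensor product by the $DG$-flat argument above, and cofibrations (termwise injections) survive because the cokernel of an injection in $\cat{PDG}^{\cont}(X,\Lambda)$ again lies in $\cat{PDG}^{\cont}(X,\Lambda)$ and in particular has flat stalk terms, so tensoring over $\Lambda/I_0$ with $\Lambda[[G]]/J$ remains left-exact stalkwise. I expect the one genuinely delicate point to be the $DG$-flatness of the output, whose proof rests on the change-of-rings collapse $\Lambda[[G]]/J\tensor_{\Lambda/I_0[G/U_0]}(\Lambda/I_0[G/U_0]\tensor_{\Lambda/I_0}-)=\Lambda[[G]]/J\tensor_{\Lambda/I_0}-$; once this stalk formula is in place, everything else is bookkeeping.
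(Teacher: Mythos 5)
Your proposal is correct and follows essentially the same route as the paper's proof: both rest on the stalk identification $(f_{U!}f_U^*\sheaf{P})_\xi\isomorph \Lambda/I[G/U]\tensor_{\Lambda}\sheaf{P}_\xi$, the exactness of $f_{U!}f_U^*$, and the change-of-rings compatibility $\Lambda/I[G/U]\tensor_{\Lambda/J[G/V]}f_{V!}f_V^*\isomorph f_{U!}f_U^*$. You merely spell out the bookkeeping (flatness, $DG$-flatness, transition isomorphisms, exactness) that the paper compresses into ``these observations suffice to deduce the assertion.''
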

\begin{proof}
We note that ${f_{U}}_!f_{U}^*\cmplx{\sheaf{F}}_I$ is a perfect $DG$-flat complex of sheaves of $\Lambda/I[G/U]$-modules. This follows since for every
geometric point $\xi$ of $X$ and every \'etale sheaf of left
$\Lambda/I$-modules $\sheaf{P}$ on $X$, we have
$$
(f_{U!}f_{U}^*\sheaf{P})_{\xi}\isomorph
\Lambda/I[G/U]\tensor_{\Lambda}(\sheaf{P}_{\xi})
$$
Moreover, the functor $f_{U!}f_{U}^*$ is exact as functor from the
abelian category of sheaves of $\Lambda/I$-modules to the abelian
category of sheaves of $\Lambda/I[G/U]$-modules and for $V\subset
U$, $J\subset I$, we have a natural isomorphism of functors
$$
\Lambda/I[G/U]\tensor_{\Lambda/J[G/V]}f_{V!}f_V^*\isomorph
f_{U!}f_U^*.
$$
These observations suffice to deduce the assertion.
\end{proof}

Sometimes, the following alternative description of the functor $f_!f^*$ is useful.

\begin{prop}\label{prop:alternative description of f_!f^*}
The sheaf $(f_!f^*\Lambda)$ is in $\Lambda^{\op}$-$\cat{SP}(X,\Lambda[[G]])$ and for any $\cmplx{\sheaf{F}}$ in $\cat{PDG}^{\cont}(X,\Lambda)$, there exists a natural isomorphism
$$
\ringtransf_{f_!f^*\Lambda}(\cmplx{\sheaf{F}})\isomorph f_!f^*\cmplx{\sheaf{F}}
$$
in $\cat{PDG}^{\cont}(X,\Lambda[[G]])$
\end{prop}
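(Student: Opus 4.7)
The plan is to proceed in two stages. First, I verify that $f_!f^*\Lambda$ lies in $\Lambda^{\op}$-$\cat{SP}(X,\Lambda[[G]])$; then I construct the required natural isomorphism by means of the projection formula for finite \'etale morphisms.

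For the first stage, the complex is concentrated in degree zero, so strict boundedness is automatic. Fix $J\in\openideals_{\Lambda[[G]]}$. By Proposition~\ref{prop:l-admissible group rings are adic}, the ring $\Lambda[[G]]$ is adic so that $\Lambda[[G]]/J$ is finite, and its $\Lambda[[G]]$-module structure factors through some quotient $\Lambda/I_0[G/U_0]$. The inverse limit defining $(f_!f^*\Lambda)_J$ then stabilises to
$$
(f_!f^*\Lambda)_J \isomorph \Lambda[[G]]/J \tensor_{\Lambda/I_0[G/U_0]} f_{U_0!}f_{U_0}^*(\Lambda/I_0).
$$
Using the stalk computation $(f_{U!}f_U^*\sheaf{P})_\xi\isomorph \Lambda/I[G/U]\tensor_\Lambda \sheaf{P}_\xi$ recalled in the preceding proof, I can read off that the stalks of $(f_!f^*\Lambda)_J$ are free of rank one as left $\Lambda[[G]]/J$-modules, hence flat; constructibility follows from the finiteness of $f_{U_0}$, and the compatible right $\Lambda$-structure is inherited from the $\Lambda$-bimodule structure on $\Lambda[[G]]/J$.

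For the second stage, the essential input is the projection formula. For every $U\in\cat{NS}(G)$, every $I\in\openideals_\Lambda$, and every sheaf $\sheaf{G}$ of left $\Lambda/I$-modules on $X$ there is a canonical isomorphism
$$
f_{U!}f_U^*(\Lambda/I)\tensor_\Lambda \sheaf{G}\isomorph f_{U!}f_U^*\sheaf{G}
$$
of sheaves of $\Lambda/I[G/U]$-$\Lambda$-bimodules, induced stalk-wise by the identity and compatible with the permutation action of $G/U$ because the $G$-action on $\sheaf{G}$ is trivial. Applying this termwise to a complex $\cmplx{\sheaf{F}}_I\in \cat{PDG}^{\cont}(X,\Lambda/I)$, then tensoring over $\Lambda/I_0[G/U_0]$ with $\Lambda[[G]]/J$, yields
$$
(f_!f^*\Lambda)_J \tensor_\Lambda \cmplx{\sheaf{F}}_I \isomorph \Lambda[[G]]/J\tensor_{\Lambda[[G]]} f_{U_0!}f_{U_0}^*\cmplx{\sheaf{F}}_I.
$$
Taking the inverse limit over $I\in \openideals_\Lambda$ and reassembling over $J\in\openideals_{\Lambda[[G]]}$ then matches the definition of $\ringtransf_{f_!f^*\Lambda}(\cmplx{\sheaf{F}})$ with that of $f_!f^*\cmplx{\sheaf{F}}$, and naturality in $\cmplx{\sheaf{F}}$ is immediate from the construction.

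The main obstacle is organisational bookkeeping: one has to verify that the left $\Lambda[[G]]/J$-structures on both sides of the projection formula are compatibly identified, and that the stabilisations of the inner inverse limits over pairs $(I,U)$ fit together with the outer limits indexed by $\openideals_\Lambda$ and $\openideals_{\Lambda[[G]]}$. Once the projection formula is recognised as the underlying identity, however, the remaining verification amounts to a formal diagram chase on transition maps.
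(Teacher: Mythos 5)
Your proof is correct and follows essentially the same route as the paper: reduce to a finite level $(I_0,U_0)$ for each open ideal $J$ of $\Lambda[[G]]$ and invoke the projection formula $f_{U!}f_U^*\Lambda/I\tensor_\Lambda\sheaf{G}\isomorph f_{U!}f_U^*\sheaf{G}$ for the finite \'etale map $f_U$, checking $G/U$-equivariance and then reassembling the inverse limits. You merely spell out the bookkeeping that the paper compresses into ``one easily reduces to the case that $\Lambda$ and $G$ are finite.''
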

\begin{proof}
One easily reduces to the case that $\Lambda$ and $G$ are finite. Then the isomorphism is provided by the well-known projection formula:
$$
f_!f^*\Lambda\tensor_{\Lambda}\cmplx{\sheaf{F}}\isomorph f_!(f^*\Lambda\tensor_{\Lambda}f^*\cmplx{\sheaf{F}})\isomorph f_!f^*\cmplx{\sheaf{F}}.
$$
\end{proof}

In the following three propositions we formulate various base change compatibilities of our construction.

\begin{prop}[Change of the base scheme.]\label{prop:base change for coverings}
Let $a\colon X'\mto X$ be a morphism of separated schemes in $\cat{Sch}_{\FF}$ and write $(f'\colon Y'\mto X',G)$ for the principal covering obtained from $(f\colon Y\mto X,G)$ by base change. Then
\begin{enumerate}
\item For any $\cmplx{\sheaf{F}}$ in $\cat{PDG}^{\cont}(X,\Lambda)$ there is a natural isomorphism
$$f'_!f'^*a^*\cmplx{\sheaf{F}}\isomorph a^*f_!f^*\cmplx{\sheaf{F}}$$
in $\cat{PDG}^{\cont}(X',\Lambda[[G]])$.
\item For any $\cmplx{\sheaf{F}}$ in $\cat{PDG}^{\cont}(X',\Lambda[[G]])$ there is a natural quasi-isomorphism
$$
f_!f^*\RDer a_!\cmplx{\sheaf{F}}\wto \RDer a_!f'_!f'^*\cmplx{\sheaf{F}}
$$
in $\cat{PDG}^{\cont}(X,\Lambda[[G]])$.
\end{enumerate}
\end{prop}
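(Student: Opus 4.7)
My plan is to reduce both statements to the classical proper base change and projection formula for étale sheaves at each finite covering level $U \in \cat{NS}(G)$ and each $I \in \openideals_{\Lambda}$, and then pass to the inverse limit.

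For part~(1), fix $U$ and $I$ and form the cartesian square
$$
\xymatrix{
Y'_U \ar[r]^{a'_U} \ar[d]_{f'_U} & Y_U \ar[d]^{f_U} \\
X' \ar[r]^{a} & X
}
$$
with $f_U$, $f'_U$ finite étale. Proper base change for the finite (hence proper) map $f_U$ gives a natural isomorphism $a^*f_{U!} \isomorph f'_{U!}a'^*_U$, and the tautology $a'^*_U f_U^* = f'^*_U a^*$ together yield
$$
f'_{U!}f'^*_U a^*\cmplx{\sheaf{F}}_I \isomorph a^*f_{U!}f_U^*\cmplx{\sheaf{F}}_I.
$$
Because $f_U$ is finite étale, $f_{U!}f_U^*$ is exact, so this is a genuine isomorphism of complexes of sheaves; it is manifestly natural in $U$ and $I$. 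Taking the inverse limit over $J \in \openideals_{\Lambda[[G]]}$ produces the required isomorphism in $\cat{PDG}^{\cont}(X', \Lambda[[G]])$.

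For part~(2), the cleanest route is via Proposition~\ref{prop:alternative description of f_!f^*}: both functors $f_!f^*$ and $f'_!f'^*$ are computed by tensoring against the locally constant constructible adic sheaves $f_!f^*\Lambda$ on $X$ and $f'_!f'^*\Lambda$ on $X'$. Part~(1), applied to the constant sheaf $\Lambda$, identifies $f'_!f'^*\Lambda \isomorph a^*(f_!f^*\Lambda)$. Hence the assertion reduces, levelwise, to the projection formula
$$
\RDer a_!\bigl(a^*\cmplx{\sheaf{G}}\tensor_\Lambda \cmplx{\sheaf{H}}\bigr) \wto \cmplx{\sheaf{G}}\tensor_\Lambda \RDer a_!\cmplx{\sheaf{H}},
$$
applied with $\cmplx{\sheaf{G}} = f_{U!}f_U^*(\Lambda/I)$, which at finite level is a lisse constructible $\Lambda/I[G/U]$-sheaf that is free of rank $[G:U]$.

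The main technical obstacle is to realise this projection formula as a genuine quasi-isomorphism in the Waldhausen model $\RDer a_! = p_*\cmplx{\God}_{X''}j_!$ attached to a chosen compactification $a = p\comp j$. The step involving $j_!$ is a strict identity of sheaves on $X''$ verified on stalks. The step involving $p_*\cmplx{\God}_{X''}$ is the usual derived projection formula for the proper map $p$ with $DG$-flat constructible coefficient sheaf $p^*\cmplx{\sheaf{G}}$; one writes down the canonical comparison morphism
$$
p^*\cmplx{\sheaf{G}}\tensor_\Lambda p_*\cmplx{\God}_{X''}\cmplx{\sheaf{K}} \mto p_*\cmplx{\God}_{X''}\bigl(p^*\cmplx{\sheaf{G}}\tensor_\Lambda \cmplx{\sheaf{K}}\bigr)
$$
and verifies that it is a quasi-isomorphism using the exactness of Godement evaluation at a geometric point together with the flatness of $p^*\cmplx{\sheaf{G}}$. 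Naturality guarantees compatibility with the transition maps in $U$, $I$, and $J$, so passage to the inverse limit produces the desired quasi-isomorphism in $\cat{PDG}^{\cont}(X, \Lambda[[G]])$.
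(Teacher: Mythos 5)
Your argument matches the paper's proof: part (1) is the proper base change theorem for the finite \'etale maps $f_U$ at each finite level, and part (2) combines Proposition~\ref{prop:alternative description of f_!f^*}, part (1) applied to the constant sheaf $\Lambda$, and the projection formula for $\RDer a_!$. (Only a cosmetic remark: in your displayed comparison map the source should be $\cmplx{\sheaf{G}}\tensor_\Lambda p_*\cmplx{\God}_{X''}\cmplx{\sheaf{K}}$ rather than $p^*\cmplx{\sheaf{G}}\tensor_\Lambda p_*\cmplx{\God}_{X''}\cmplx{\sheaf{K}}$, and the canonical projection-formula morphism runs from $\cmplx{\sheaf{G}}\tensor_\Lambda\RDer a_!\cmplx{\sheaf{H}}$ into $\RDer a_!\bigl(a^*\cmplx{\sheaf{G}}\tensor_\Lambda\cmplx{\sheaf{H}}\bigr)$, which is the direction stated in the proposition.)
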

\begin{proof}
The first assertion follows from an application of the proper base change theorem in a very trivial case. For the second assertion, we use the projection formula to see that the natural morphism
$$
\ringtransf_{f_!f^*\Lambda}\RDer a_!\cmplx{\sheaf{F}}\mto \RDer a_! \ringtransf_{a^*f_!f^*\Lambda}\cmplx{\sheaf{F}}
$$
is a quasi-isomorphism and then the first assertion to identify $a^*f_!f^*\Lambda$ with $f'_!f'^*\Lambda$.
\end{proof}

\begin{prop}[Change of the group.]\label{prop:ring change for coverings}
Let $\cmplx{\sheaf{F}}$ be a complex in $\cat{PDG}^{\cont}(X,\Lambda)$.
\begin{enumerate}
\item Let $H$ be a closed normal subgroup of $G$. Then there
exists a natural isomorphism
$$
\ringtransf_{\Lambda[[G/H]]}f_!f^*\cmplx{\sheaf{F}}\wto (f_H)_!(f_H)^*\cmplx{\sheaf{F}}
$$
in
$\cat{PDG}^{\cont}(X,\Lambda[[G/H]])$.

\item Let $U$ be an open subgroup of $G$, let $f_U\colon Y_U\mto
X$ denote the natural projection map, and view $\Lambda[[G]]$ as a
$\Lambda[[U]]$-$\Lambda[[G]]$-bimodule. Then there exists a
natural quasi-isomorphism
$$
\ringtransf_{\Lambda[[G]]}f_!f^*\cmplx{\sheaf{F}}\wto \left(\RDer
(f_U)_!\right)\left((f^U)_!(f^U)^*\right)f_U^*\cmplx{\sheaf{F}}
$$
in
$\cat{PDG}^{\cont}(X,\Lambda[[U]])$.
\end{enumerate}
\end{prop}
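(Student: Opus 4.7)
My plan is to establish both identifications at each finite level of the defining inverse systems and then verify compatibility with transition maps so that they assemble into the asserted (quasi-)isomorphisms in $\cat{PDG}^{\cont}$. The key input in both parts is the functoriality of $(-)_!$ and $(-)^*$ along compositions of finite \'etale morphisms, combined with a cofinality argument relating the two indexing categories involved.

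For part (1), write $\alpha\colon G\mto G/H$ for the projection and note that $(f_H)_V=f_{\alpha^{-1}(V)}$ by construction, so the inverse system defining $(f_H)_!(f_H)^*\cmplx{\sheaf{F}}$ is in effect a subsystem of the one defining $f_!f^*\cmplx{\sheaf{F}}$. The open normal subgroups $U\in\cat{NS}(G)$ containing $H$ are cofinal (given any $U\in\cat{NS}(G)$, $UH$ is open and normal since $H$ is closed normal), and for such $U$ one has $G/U=(G/H)/(U/H)$. Hence the literal identity of sheaves
$$(f_U)_!f_U^*\cmplx{\sheaf{F}}=((f_H)_{U/H})_!((f_H)_{U/H})^*\cmplx{\sheaf{F}}$$
already respects the full $\Lambda[G/U]$-action. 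Applying $\Lambda[[G/H]]/I\tensor_{\Lambda[[G]]}-$ and choosing $U\subset\alpha^{-1}(V)$ for $V\in\cat{NS}(G/H)$ corresponding to $I$ reduces the tensor to $(f_{\alpha^{-1}(V)})_!f_{\alpha^{-1}(V)}^*\cmplx{\sheaf{F}}_{I_0}$, which is precisely the $V$-term of the system defining $(f_H)_!(f_H)^*\cmplx{\sheaf{F}}$; the transition maps match tautologically.

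For part (2), the factorisation $f=f_U\comp f^U$ yields
$$(f_V)_!f_V^*=(f_U)_!((f^U)_V)_!((f^U)_V)^*f_U^*$$
for every $V\in\cat{NS}(G)$ contained in $U$, and the $G/V$-action on the left restricts to the natural $U/V$-action on the right (in both cases, permutation of the same coset representatives at stalks). Such $V$ form a cofinal family in $\cat{NS}(G)$: for any $V_0\in\cat{NS}(G)$, the intersection of $V_0$ with the normal core of $U$ in $G$ again lies in $\cat{NS}(G)$ and is contained in $U$. Applying $\ringtransf_{\Lambda[[G]]}$ amounts to restricting the $\Lambda[[G]]$-structure to a $\Lambda[[U]]$-structure, so the resulting finite-level terms coincide with those of the inverse system defining $(f_U)_!(f^U)_!(f^U)^*f_U^*\cmplx{\sheaf{F}}$. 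Finally, since $f_U$ is finite \'etale, the natural morphism $(f_U)_!\mto\RDer(f_U)_!$ coming from the Godement resolution is a quasi-isomorphism, which promotes the identification to the asserted quasi-isomorphism.

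The main technical obstacle is coordinating the two different indexing categories, namely $\openideals_{\Lambda[[G]]}$ appearing in the definition of $f_!f^*\cmplx{\sheaf{F}}$ versus $\openideals_{\Lambda[[U]]}$ appearing in the definition of $\ringtransf_{\Lambda[[G]]}$. This is handled by the fact that $\Lambda[[G]]$ is free of finite rank $[G:U]$ as a left $\Lambda[[U]]$-module, which makes $\Lambda[[U]]/I\tensor_{\Lambda[[U]]}\Lambda[[G]]/J$ stabilise at some $J_0\in\openideals_{\Lambda[[G]]}$ depending only on $I$; combined with the cofinality of open normal subgroups of $G$ contained in $U$, this reduces both inverse systems to a common cofinal indexing on which the finite-level identifications above are immediate.
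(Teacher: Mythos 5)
Your proof is correct and follows essentially the same route as the paper: reduce to the finite levels of the inverse systems, where the identification in (1) is the tautological/stalk-level one and (2) comes from the factorisation $f = f_U\comp f^U$ together with the quasi-isomorphism $(f_U)_!\mto\RDer (f_U)_!$ for the finite map $f_U$. One terminological quibble: in (1) the open normal subgroups containing $H$ are not cofinal in $\cat{NS}(G)$ in the usual (arbitrarily small) sense; what you actually use, correctly, is that every open ideal of $\Lambda[[G/H]]$ pulls back to an open ideal of $\Lambda[[G]]$ whose associated group level contains $H$, so the base change collapses the system onto that subfamily.
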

\begin{proof}
One reduces to the case that $\Lambda$ and $G$ are finite and that $\cmplx{\sheaf{F}}=\Lambda$. The first morphism is induced by the natural map $f_!f^*\Lambda\mto (f_H)_!(f_H)^*\Lambda$ and is easily checked to be an isomorphism by looking at the stalks. The second morphism is the composition of the isomorphism $f_!f^*\Lambda\isomorph {f_U}_!f^U_!{f^U}^*f_U^*\Lambda$ with the functorial morphism ${f_U}_!\mto \RDer {f_U}_!$, the latter being a quasi-isomorphism since $f_U$ is finite.
\end{proof}

\begin{defn}\label{defn:induced complexes}
Let $\Lambda$ and $\Lambda'$ be two adic $\Int_{\ell}$-algebras and let $\cmplx{\sheaf{K}}$ be in $\Lambda[[G]]^{\op}$-$\cat{SP}(X,\Lambda')$.
\begin{enumerate}
\item
We will write $\cmplx{\sheaf{K}[[G]]^{\delta}}$ for the complex $\ringtransf_{\Lambda'[[G]]}\cmplx{\sheaf{K}}$ in $\Lambda[[G]]^{\op}$-$\cat{SP}(X,\Lambda'[[G]])$  with the right $\Lambda[[G]]$-structure given by the diagonal right operation of $G$.

\item We will write $\cmplx{\widetilde{\sheaf{K}}}$ for the complex $\ringtransf_{\cmplx{\sheaf{K}}}f_!f^*\Lambda$ in $\Lambda^{\op}$-$\cat{SP}(X,\Lambda')$.
\end{enumerate}
\end{defn}

\begin{prop}[Compatibility with tensor products.]\label{prop:ring change for coverings II}
Let $\cmplx{\sheaf{K}}$ be in $\Lambda[[G]]^{\op}$-$\cat{SP}(X,\Lambda')$. For every $\cmplx{\sheaf{F}}$ in $\cat{PDG}^{\cont}(X,\Lambda)$ there exists a natural isomorphism
$$
\ringtransf_{\cmplx{\sheaf{K}[[G]]^{\delta}}}f_!f^*\cmplx{\sheaf{F}}\isomorph f_!f^*\ringtransf_{\cmplx{\widetilde{\sheaf{K}}}}\cmplx{\sheaf{F}}
$$
\end{prop}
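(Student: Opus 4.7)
The strategy is to rewrite both sides as a single application of $\ringtransf_{-}$ to $\cmplx{\sheaf{F}}$ and then reduce the claim to an isomorphism of bimodule sheaves that can be checked locally. As a first step, unwrapping $\ringtransf_{-}$ as an inverse limit over $\openideals_{\Lambda[[G]]}$ and $\openideals_{\Lambda'[[G]]}$, and using that $f_!f^*$ is computed level-wise, reduces the assertion to the case of finite rings $\Lambda,\Lambda'$ and a finite quotient of $G$, so that $f\colon Y\mto X$ is a finite \'etale Galois covering and all tensor products are the usual tensor products of sheaves.

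Next, Proposition~\ref{prop:alternative description of f_!f^*}, applied over the coefficients $\Lambda$ and $\Lambda'$ respectively, yields natural identifications $f_!f^*\cmplx{\sheaf{F}}\isomorph\ringtransf_{f_!f^*\Lambda}\cmplx{\sheaf{F}}$ and $f_!f^*\ringtransf_{\widetilde{\sheaf{K}}}\cmplx{\sheaf{F}}\isomorph\ringtransf_{f_!f^*\Lambda'\otimes_{\Lambda'}\widetilde{\sheaf{K}}}\cmplx{\sheaf{F}}$. Expanding $\widetilde{\sheaf{K}}=\sheaf{K}\otimes_{\Lambda[[G]]}f_!f^*\Lambda$ and using that composition of $\ringtransf$'s corresponds to the tensor product of bimodule complexes, both sides of the asserted isomorphism become applications of $\ringtransf_{-}$ to $\cmplx{\sheaf{F}}$. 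By functoriality of $\ringtransf_{-}$ in its bimodule argument, it therefore suffices to exhibit a natural isomorphism of sheaves in $\Lambda^\op$-$\cat{SP}(X,\Lambda'[[G]])$
$$
\sheaf{K}[[G]]^\delta\otimes_{\Lambda[[G]]}f_!f^*\Lambda \isomorph f_!f^*\Lambda'\otimes_{\Lambda'}\sheaf{K}\otimes_{\Lambda[[G]]}f_!f^*\Lambda.
$$

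To construct this morphism I would work locally on \'etale opens $U\mto X$ over which the $G$-torsor $Y\times_X U\mto U$ trivialises. A choice of such trivialisation identifies $(f_!f^*\Lambda)(U)$ with $\Lambda[G]$ equipped with the permutation left $G$-action. On the left side, the diagonal right $G$-relation $(\lambda\otimes k)\cdot g\otimes 1 = (\lambda\otimes k)\otimes g$ collapses $(\Lambda'[G]\otimes_{\Lambda'}\sheaf{K}(U))_\delta\otimes_{\Lambda[G]}\Lambda[G]$ onto $\Lambda'[G]\otimes_{\Lambda'}\sheaf{K}(U)$ via $(\lambda\otimes k)\otimes g \mapsto \lambda g\otimes kg$; on the right side, $\Lambda'[G]\otimes_{\Lambda'}\sheaf{K}(U)\otimes_{\Lambda[G]}\Lambda[G]$ collapses to the same expression by the standard cancellation against the free module of rank one. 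One then verifies that the composed identification is independent of the trivialisation, because replacing the chosen section of $Y\times_X U\mto U$ by its translate under $g_0\in G$ induces the same right multiplication on both sides; this shows the local identifications glue to a morphism of sheaves which by inspection is an isomorphism on geometric stalks.

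The main obstacle is the bookkeeping of the various left and right module structures: the diagonal right $G$-action defining $\sheaf{K}[[G]]^\delta$ must be shown to cancel exactly against the permutation left $G$-action of $f_!f^*\Lambda$ to produce the permutation left $G$-action of $f_!f^*\Lambda'$ arising on the right-hand side. Verifying that this cancellation is canonical — that is, independent of any local trivialising choice — is what the gluing step above achieves, and is where the bulk of the verification lies.
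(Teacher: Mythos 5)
Your argument is correct, and the reductions at the start (passing to finite quotients of $\Lambda$, $\Lambda'$, $G$, then using Proposition~\ref{prop:alternative description of f_!f^*} to turn both sides into a single $\ringtransf$ applied to $\cmplx{\sheaf{F}}$, so that everything hinges on one isomorphism of bimodule sheaves) match what the paper does. Where you genuinely diverge is in the construction of that isomorphism. The paper never chooses a local trivialisation: it first handles two special cases --- trivial $G$-action on $\sheaf{K}$ (pure projection formula) and $\sheaf{K}=\Lambda[G]$ the constant sheaf, where it writes down a global diagonal map $u\colon(a_\psi)\mapsto(a_\psi\delta_{\psi,\phi})$ on presheaf sections, checks it is equivariant for the diagonal action, and verifies it is an isomorphism on stalks --- and then deduces the general case by factoring $\sheaf{K}[G]^\delta\isomorph\sheaf{K}'[G]^\delta\tensor_{\Lambda[G][G]}\Lambda[G][G]^\delta$ with $\sheaf{K}'$ carrying a trivial action of the second copy of $G$. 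Your route replaces this two-case-plus-factorisation scheme by a single uniform construction via \'etale-local trivialisation of the torsor (e.g.\ over $Y_U\mto X$ itself) and descent. The trade-off is that the paper's map is canonical from the outset, so no gluing argument is needed, whereas in your version the entire content sits in the independence-of-trivialisation check; that check does go through, since replacing the section by its $g_0$-translate changes \emph{both} collapse maps by the diagonal right action $\lambda\tensor k\mapsto\lambda g_0\tensor kg_0$ --- which is exactly the point at which the diagonal right $G$-structure on $\sheaf{K}[[G]]^{\delta}$ is indispensable, just as the $G$-equivariance of $u$ is in the paper's Case 2. If you write this up, do spell out that computation rather than asserting it, and note explicitly that the same trivialising section is used for $f_!f^*\Lambda$ and $f_!f^*\Lambda'$ on the right-hand side.
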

\begin{proof}
One easily reduces to the case that $\Lambda'$ and $\Lambda$ are finite rings and that $G$ is a finite group. Moreover, it suffices to consider a sheaf $\sheaf{K}$ of $\Lambda[G]$-$\Lambda$-bimodules viewed as a complex in $\Lambda[G]^\op$-$\cat{SP}(X,\Lambda')$ which is concentrated in degree $0$. In view of Proposition~\ref{prop:alternative description of f_!f^*} we may also assume that $\cmplx{\sheaf{F}}=\Lambda$. We begin by proving two special cases.

\emph{Case 1.} Assume that $G$ operates trivially on $\sheaf{K}$. Then
$$
\ringtransf_{\sheaf{K}[[G]]^{\delta}}f_!f^*\Lambda\isomorph\sheaf{K}\tensor_{\Lambda}f_!f^*\Lambda\isomorph f_!f^*\sheaf{K}
$$
by the projection formula. On the other hand,
$$
\ringtransf_{\widetilde{\sheaf{K}}}\Lambda\isomorph\sheaf{K}\tensor_{\Lambda[G]}f_!f^*\Lambda\isomorph\sheaf{K},
$$
and therefore, $\ringtransf_{\sheaf{K}[[G]]^{\delta}}f_!f^*\Lambda\isomorph f_!f^*\ringtransf_{\widetilde{\sheaf{K}}}\Lambda$.

\emph{Case 2.} Assume that $\Lambda'=\Lambda[G]$ and that $\sheaf{K}$ is the constant sheaf $\Lambda[G]$. Let $ U\mto X$ be finite \'etale and consider the homomorphism
\begin{align*}
u\colon \bigoplus_{\psi\in \Hom_X(U,Y)}\Lambda&\mto \bigoplus_{\psi\in\Hom_X(U,Y)}\bigoplus_{\phi\in\Hom_X(U,Y)}\Lambda\\
(a_\psi)&\mapsto (a_\psi\delta_{\psi,\phi})
\end{align*}
with
$$
\delta_{\psi,\phi}=
\begin{cases}
  1&\text{if $\psi =\phi$,}\\
  0&\text{else.}
\end{cases}
$$
For $g\in G$ we have
$$
u(g(a_\psi))=(g,g)u(a_\psi).
$$
Hence, $u$ induces a $\Lambda[G][G]$-homomorphism
$$
\ringtransf_{\Lambda[G][G]^{\delta}}(f_!f^*\Lambda)=\Lambda[G][G]^\delta\tensor_{\Lambda}f_!f^*\Lambda\mto f_!f^*f_!f^*\Lambda\isomorph f_!f^*\ringtransf_{\widetilde{\Lambda[G]}}(\Lambda)
$$
which is seen to be an isomorphism by checking on the stalks.

To prove the general case, we let $\sheaf{K}'$ be the sheaf $\sheaf{K}$ considered as a sheaf of $\Lambda'$-$\Lambda[G][G]$-bimodules, where the operation of the second copy of $G$ is the trivial one. Then we have an obvious isomorphism of sheaves of $\Lambda'[G]$-$\Lambda[G]$-bimodules
$$
\sheaf{K}[G]^\delta\isomorph \sheaf{K'}[G]^\delta\tensor_{\Lambda[G][G]}\Lambda[G][G]^\delta
$$
and by the two cases that we have already proved we obtain
\begin{align*}
f_!f^*\ringtransf_{\widetilde{\sheaf{K}}}(\Lambda)&\isomorph f_!f^*\ringtransf_{\widetilde{\sheaf{K}}'}f_!f^*\Lambda
\isomorph \ringtransf_{\sheaf{K}'[G]^\delta}f_!f^*f_!f^*\Lambda\\
&\isomorph \ringtransf_{\sheaf{K}'[G]^\delta}\ringtransf_{\Lambda[G][G]^\delta}f_!f^*\Lambda\isomorph\ringtransf_{\sheaf{K}[G]^\delta}f_!f^*\Lambda
\end{align*}
as desired.
\end{proof}

The significance of the construction in \ref{defn:induced complexes}.(2) is partly explained by
the following version of the well-known equivalence of finite representations of the fundamental group with locally constant \'etale sheaves on a connected scheme $X$.

\begin{prop}\label{prop:locally constant sheaves}
Let $\Lambda$ and $\Lambda'$ be two adic $\Int_{\ell}$-algebras. Assume that $X$ is connected and that $x$ is a geometric point of $X$. Let $(f\colon Y\mto X,G)$ be a virtual pro-$\ell$-subcovering of the universal covering $(\widetilde{X}\mto X,\fundG(X,x))$. The functor
$$
\Lambda[[G]]^\op\text{-}\cat{SP}(\Lambda')\mto \Lambda^{\op}\text{-}\cat{SP}(X,\Lambda'),\qquad \cmplx{P}\mapsto\cmplx{\widetilde{P}}
$$
identifies $\Lambda[[G]]^\op\text{-}\cat{SP}(\Lambda')$ with a full subcategory $\cat{C}$ of $\Lambda^{\op}\text{-}\cat{SP}(X,\Lambda')$. The objects of $\cat{C}$ are systems of strictly bounded complexes $(\cmplx{\sheaf{F}}_{I})_{I\in\openideals_{\Lambda}}$ of sheaves of $\Lambda'/I$-$\Lambda$-bimodules such that for each $n$, $\sheaf{F}_I^n$ is constructible and flat as sheaf of $\Lambda'/I$-modules and there exists an open normal subgroup $U$ of $G$ such that $f_U^*\sheaf{F}^n_I$ is a constant sheaf.
\end{prop}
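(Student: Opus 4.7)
The plan is to reduce the proposition to the classical equivalence (\cite[Exp.~V]{SGA1}) between locally constant constructible \'etale sheaves of $\Lambda'/J$-modules on the connected scheme $X$ that are trivialized by pullback along some $f_U$, and finite right $\Lambda'/J[G/U]$-modules (via monodromy at the base point $x$), and then to pass to the inverse limits over $U\in\cat{NS}(G)$ and $J\in\openideals_{\Lambda'}$.

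First, for well-definedness of the functor, given $\cmplx{P}$ in $\Lambda[[G]]^{\op}\text{-}\cat{SP}(\Lambda')$, each $P^n/JP^n$ is finite over $\Lambda'/J$ and its right $\Lambda[[G]]$-action factors, by continuity, through $\Lambda/I[G/U]$ for suitable $I\in\openideals_\Lambda$ and $U\in\cat{NS}(G)$. Unwinding the definitions, $\widetilde{P^n_J}$ is then $(P^n/J)\tensor_{\Lambda/I[G/U]} f_{U!}f_U^*(\Lambda/I)$, and the axioms of a principal covering give $f_U^* f_{U!}f_U^*(\Lambda/I)\isomorph\Lambda/I[G/U]$ as a constant sheaf on $Y_U$, whence $f_U^*\widetilde{P^n_J}$ is the constant sheaf with value $P^n/J$. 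Hence $\cmplx{\widetilde{P}}$ lies in $\cat{C}$.

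For essential surjectivity onto $\cat{C}$ and for full faithfulness, I would apply the stalk functor at $x$. Given $(\cmplx{\sheaf{F}}_J)$ in $\cat{C}$, each $\sheaf{F}^n_J$ is trivialized by some $f_U$, and the stalk $P^n_J:=\sheaf{F}^n_{J,x}$ acquires a canonical right $\Lambda'/J[G/U]$-module structure via the identification $\Hom_X(x,Y_U)\isomorph G/U$ induced by a fixed lift of $x$ to the universal cover. These structures are compatible as $U$ shrinks and as $J$ varies, and assemble to an object $\cmplx{P}=\varprojlim_J \cmplx{P}_J$ of $\Lambda[[G]]^\op\text{-}\cat{SP}(\Lambda')$; the comparison morphism $\cmplx{\widetilde{P}}\mto\cmplx{\sheaf{F}}$ is an isomorphism at each finite level by the classical equivalence, and the analogous argument on Hom-sets yields full faithfulness.

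The main point of care is the compatibility of the two inverse limits, over $\openideals_{\Lambda'}$ and over $\cat{NS}(G)$. In particular one has to check that the $G$-action on $\varprojlim_J\sheaf{F}^n_{J,x}$ is continuous for the profinite topology on $G$, which follows from the finiteness of each $\sheaf{F}^n_{J,x}$ over the finite ring $\Lambda'/J$ (so that the action factors through some $G/U$ for every fixed $J$), and that the resulting limit $P^n$ is finitely generated projective over $\Lambda'$, which can be obtained by the same completeness argument used in the proof of Proposition~\ref{prop:embedding SP(Lambda) in PDG(Lambda)}.
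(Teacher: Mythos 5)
Your argument is correct and follows essentially the same route as the paper's proof: reduce to finite quotients, observe that $f_U^*\widetilde{P^n}$ is constant with value $P^n/J$, and invert the functor by taking the stalk at $x$ equipped with the (contragredient) monodromy action of $G$, checking compatibility of the limits over $\openideals_{\Lambda'}$ and $\cat{NS}(G)$; the paper's own proof is just a terser version of this. One cosmetic slip: the extra structure you put on the stalk $\sheaf{F}^n_{J,x}$ should be a \emph{right} module structure over $\Lambda/I[G/U]$ (i.e.\ over $\Lambda[[G]]$), compatible with the left $\Lambda'/J$-action, not a right $\Lambda'/J[G/U]$-structure as written.
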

\begin{proof}
We may assume that $\Lambda'$ is finite. Clearly, $\cmplx{\widetilde{P}}$ is an object of $\cat{C}$ for every complex $\cmplx{P}$ in $\Lambda^{\op}$-$\cat{SP}(\Lambda')$. To construct the inverse functor, we fix a compatible family of geometric points $(x\mto Y_U)_{U\in\NS_G}$. If $\cmplx{\sheaf{F}}$ is in $\cat{C}$, there exists an open normal subgroup $U$ of $G$ such that the induced morphism $\cmplx{\sheaf{F}(Y_U)}\mto\cmplx{\sheaf{F}}_x$ is an isomorphism. Turn $\cmplx{\sheaf{F}}_x$ into a complex in $\Lambda[[G]]^\op$-$\cat{SP}(\Lambda')$ by considering the contragredient of the left action of $G$ on $\cmplx{\sheaf{F}(Y_U)}$. One then checks that this is an inverse to the functor $\cmplx{P}\mapsto\cmplx{\widetilde{P}}$.
\end{proof}

\begin{rem}
Extending Definition~\ref{defn:PDGcont(X,Lambda)} to arbitrary compact rings, one can also prove a corresponding statement for the full universal covering. On the other hand, if $\Lambda$ and $\Lambda'$ are adic rings, it follows as in \cite[Theorem~5.6.5]{Witte:PhD} that for every $\cmplx{K}$ in $\Lambda[[\fundG(X,x)]]^\op$-$\cat{SP}(\Lambda')$ there exists a factor group $G$ of $\fundG(X,x)$ such that $G$ is a virtual pro-$\ell$-group and such that $\cmplx{K}$ also lies in $\Lambda[[G]]^\op$-$\cat{SP}(\Lambda')$.
\end{rem}

\begin{rem}
Proposition~\ref{prop:locally constant sheaves} implies in particular that for any virtual pro-$\ell$-subcovering $(f\colon Y\mto X,G)$ of the universal covering, the sheaf $\sheaf{M}(G)$ of the introduction corresponds to $f_!f^*\Int_{\ell}$.
\end{rem}

\section{The Cyclotomic \texorpdfstring{$\Gamma$-}{}Covering}\label{sec:the cyclotomic covering}

Let $X$ be a separated scheme in $\cat{Sch}_{\FF}$. For any complex
$\cmplx{\sheaf{F}}=(\cmplx{\sheaf{F}}_I)_{I\in\openideals_{\Lambda}}$ in
$\cat{PDG}^{\cont}(X,\Lambda)$, we write
$$
\HF_c^i(X,\cmplx{\sheaf{F}})=\HF^i(\varprojlim_{I\in\openideals_{\Lambda}}\RDer\Sectc(X,\cmplx{\sheaf{F}_I}))
$$
for the $i$-th hypercohomology module of the complex
$\RDer\Sectc(X,\cmplx{\sheaf{F}})$.

\begin{prop}\label{prop:the Zl case}
Let $(f\colon X_{\ell^{\infty}}\mto X,\Gamma_{\ell^{\infty}})$ be the cyclotomic
$\Gamma_{\ell^{\infty}}$-covering of $X$. For all $i\in \Int$ and any complex
$\cmplx{\sheaf{F}}=(\cmplx{\sheaf{F}}_I)_{I\in\openideals_{\Lambda}}$
in $\cat{PDG}^{\cont}(X,\Lambda)$, we have
$$
\HF_c^i(X,f_!f^*\cmplx{\sheaf{F}})\isomorph\varprojlim_{n}\HF_c^{i-1}(\algc{X},\cmplx{\sheaf{F}})/(\id-\Frob_\FF^{\ell^n})\HF_c^{i-1}(\algc{X},\cmplx{\sheaf{F}})
$$
as $\Lambda$-modules.
\end{prop}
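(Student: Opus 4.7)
The plan is to combine the fundamental exact sequence of Proposition~\ref{prop:fundamental exact sequence} with the finite subcoverings $f_n\colon X_{\ell^n}\to X$ of the cyclotomic tower. First, using the definition of $f_!f^*$ together with Proposition~\ref{prop:ring change for coverings}, the complex $\RDer\Sectc(X, f_!f^*\cmplx{\sheaf{F}})$ is the inverse system, indexed by pairs $(I, n)\in\openideals_{\Lambda}\times\mathbb{N}$, of $\RDer\Sectc(X, (f_n)_!(f_n)^*\cmplx{\sheaf{F}}_I)$, and the latter is identified with $\RDer\Sectc(X_{\ell^n}, (f_n)^*\cmplx{\sheaf{F}}_I)$ since $f_n$ is finite \'etale. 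Applying Proposition~\ref{prop:fundamental exact sequence} to each $X_{\ell^n}$ (whose geometric Frobenius is $\Frob_\FF^{\ell^n}$, and whose base change to $\algc{\FF}$ is $\algc{X}$) produces at each level $(I,n)$ an exact triangle
$$
\RDer\Sectc(X_{\ell^n}, (f_n)^*\cmplx{\sheaf{F}}_I)\to \RDer\Sectc(\algc{X}, \cmplx{\sheaf{F}}_I)\xrightarrow{\id - \Frob_\FF^{\ell^n}}\RDer\Sectc(\algc{X}, \cmplx{\sheaf{F}}_I),
$$
whose associated long exact cohomology sequence is
$$
0\to\HF_c^{i-1}(\algc{X}, \cmplx{\sheaf{F}}_I)/(\id - \Frob_\FF^{\ell^n})\to\HF_c^i(X_{\ell^n}, (f_n)^*\cmplx{\sheaf{F}}_I)\to \ker\bigl(\id - \Frob_\FF^{\ell^n}\mid\HF_c^i(\algc{X}, \cmplx{\sheaf{F}}_I)\bigr)\to 0.
$$

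Next I would pass to the double inverse limit. Since every cohomology group with finite $\Lambda/I$-coefficients is itself a finite module, Mittag--Leffler holds trivially for every inverse system encountered, so $\HF^i$ commutes with $\varprojlim$ and the three-term sequence remains exact after taking $\varprojlim_I$. The transition maps in $n$ are forced by the factorisation $\id - \Frob_\FF^{\ell^{n+1}} = (\id + \Frob_\FF^{\ell^n} + \cdots + \Frob_\FF^{(\ell-1)\ell^n})(\id - \Frob_\FF^{\ell^n})$: on the left-hand cokernel the transitions are the canonical surjections $M/(\id-\Frob_\FF^{\ell^{n+1}})M\twoheadrightarrow M/(\id-\Frob_\FF^{\ell^n})M$, whose $\varprojlim_n$ is precisely the right-hand side of the claimed formula.

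The decisive step and main obstacle is to show that the right-hand kernel term vanishes after $\varprojlim_n$. I verify this first at a fixed finite coefficient level: the kernels form an increasing chain of submodules of the finite group $\HF_c^i(\algc{X}, \cmplx{\sheaf{F}}_I)$ and therefore stabilise; on the stabilised submodule the transition map sends $v\in\ker(\id - \Frob_\FF^{\ell^n})$ to $\ell v$ (the other summands $\Frob_\FF^{k\ell^n}v$ collapse to $v$); since $\ell$ is nilpotent in the finite $\Int_\ell$-algebra $\Lambda/I$, the limit along these transitions is zero. Passing back through $\varprojlim_I$, still using the finiteness of each term, concludes the argument and produces the claimed isomorphism $\HF_c^i(X, f_!f^*\cmplx{\sheaf{F}})\isomorph\varprojlim_n \HF_c^{i-1}(\algc{X}, \cmplx{\sheaf{F}})/(\id - \Frob_\FF^{\ell^n})\HF_c^{i-1}(\algc{X}, \cmplx{\sheaf{F}})$.
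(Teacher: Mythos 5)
Your proposal is correct and follows essentially the same route as the paper: reduce to the finite levels $(I,n)$ using finiteness of the cohomology groups (so that $\varprojlim$ is exact and commutes with $\HF^i$), extract from the Hochschild--Serre/fundamental exact sequence for $X_{\ell^n}$ the short exact sequence with the cokernel of $\id-\Frob_\FF^{\ell^n}$ on $\HF_c^{i-1}$ and the kernel on $\HF_c^i$, and kill $\varprojlim_n$ of the kernels by observing that they stabilise and that the trace-induced transition $\sum_{k=0}^{\ell-1}\Frob_\FF^{k\ell^n}$ becomes multiplication by the (nilpotent) element $\ell$ on the stable range. This is exactly the paper's argument, so no further comment is needed.
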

\begin{proof}
Write $f_n=f_{\Gamma_{\ell^{\infty}}^n}$. Since
$$
\RDer^1\varprojlim_{J\in\openideals_{\Lambda[[\Gamma_{\ell^{\infty}}]]}}M_J=0
$$
for any inverse system $(M_J)_{J\in\openideals_{\Lambda[[\Gamma_{\ell^{\infty}}]]}}$ of $\Lambda[[\Gamma_{\ell^{\infty}}]]$-modules with surjective transition maps and since
the cohomology groups
$\HF_c^i(X,{f_{n}}_!f_{n}^*\cmplx{\sheaf{F}}_I)$ are finite for $I\in\openideals_{\Lambda}$, we conclude that
$$
\HF_c^i(X,f_!f^*\cmplx{\sheaf{F}})
\isomorph\varprojlim_{I\in\openideals_{\Lambda}}\varprojlim_{n}\HF_c^i(X,{f_{n}}_!f_{n}^*\cmplx{\sheaf{F}}_I)
$$
(see also \cite[Proposition~5.3.2]{Witte:PhD}).

Moreover, for every $n$, there is a commutative diagram with exact
rows
$$
\xymatrix{
\cdots\ar[r]&\HF_c^i(X,{f_{n+1}}_!f_{n+1}^*\cmplx{\sheaf{F}}_I)\ar[d]^{tr}\ar[r]&
\HF_c^i(\algc{X},\cmplx{\sheaf{F}}_I)\ar[d]^{\sum\limits_{k=0}^{\ell-1}\Frob_\FF^{k\ell^n}}\ar[r]^{\id-\Frob_{\FF}^{\ell^{n+1}}}&
\HF_c^i(\algc{X},\cmplx{\sheaf{F}}_I)\ar[r]\ar[d]^{=}&\dots\\
\cdots\ar[r]&\HF_c^i(X,{f_{n}}_!f_{n}^*\cmplx{\sheaf{F}}_I)\ar[r]&
\HF_c^i(\algc{X},\cmplx{\sheaf{F}}_I)\ar[r]^{\id-\Frob_{\FF}^{\ell^{n}}}&
\HF_c^i(\algc{X},\cmplx{\sheaf{F}}_I)\ar[r]&\dots\\
}
$$
where
$tr\colon{f_{n+1}}_!f_{n+1}^*\cmplx{\sheaf{F}}_I
\mto{f_{n}}_!f_{n}^*\cmplx{\sheaf{F}}_I$
denotes the usual trace map. Set
$$
K_n=\ker\left(\HF_c^i(\algc{X},\cmplx{\sheaf{F}}_I)
\xrightarrow{\id-\Frob_{\FF}^{\ell^{n}}}
\HF_c^i(\algc{X},\cmplx{\sheaf{F}}_I)\right)
$$
Since $\HF_c^i(\algc{X},\cmplx{\sheaf{F}}_I)$ is a finite group, the inclusion
chain
$$
K_0\subset K_{1}\subset \dotso\subset K_{n}\subset\dotso
$$
becomes stationary. Hence, for large $n$, $K_n=K_{n+1}$ and the map
$$
\sum_{k=0}^{\ell-1}\Frob_{\FF}^{k\ell^n}\colon K_{n+1}\mto K_{n}
$$
is multiplication by $\ell$. Since $K_n$ is annihilated by a power of $\ell$, we conclude
$$
\varprojlim_{n} K_n=0.
$$
The equality claimed in the proposition is an immediate
consequence.
\end{proof}

\begin{prop}\label{prop:twisted Frob operation}
Let $\gamma$ denote the image of $\Frob_{\FF}$ in $\Gamma=\Gamma_{k\ell^{\infty}}$ and let $(X_{k\ell^{\infty}}\mto X,\Gamma)$ be the cyclotomic $\Gamma$-covering of $X$. Let $\cmplx{\sheaf{F}}$ be a complex in $\cat{PDG}^{\cont}(X,\Lambda)$. There exists a quasi-isomorphism
$$
\eta\colon \ringtransf_{\Lambda[[\Gamma]]}\RDer\Sectc(\algc{X},\cmplx{\sheaf{F}})\mto \RDer\Sectc(\algc{X},f_!f^*\cmplx{\sheaf{F}}) $$
in $\cat{PDG}^{\cont}(\Lambda[[\Gamma]])$ such that the following diagram commutes:
$$
\xymatrix{
\ringtransf_{\Lambda[[\Gamma]]}\RDer\Sectc(\algc{X},\cmplx{\sheaf{F}})\ar[r]^{\gamma^{-1}\tensor \Frob_{\FF}}\ar[d]^{\eta}&
\ringtransf_{\Lambda[[\Gamma]]}\RDer\Sectc(\algc{X},\cmplx{\sheaf{F}})\ar[d]^{\eta}\\
 \RDer\Sectc(\algc{X},f_!f^*\cmplx{\sheaf{F}})\ar[r]^{\Frob_{\FF}}&
  \RDer\Sectc(\algc{X},f_!f^*\cmplx{\sheaf{F}})
}
$$
\end{prop}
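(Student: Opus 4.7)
The plan is to construct $\eta$ as a composition of base-change and projection-formula isomorphisms, and then to analyse the $\Frob_\FF$-action through that chain, the only non-trivial point being a twist by $\gamma^{-1}$ which appears because the cyclotomic covering becomes pro-trivial after base change to $\algc{\FF}$.

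\emph{Construction of $\eta$.} Let $h\colon X\mto\Spec\FF$ be the structural morphism. By definition, $f$ is obtained from the cyclotomic covering $\pi\colon\Spec\FF_{q^{k\ell^\infty}}\mto\Spec\FF$ of the base by pulling back along $h$. Applying Proposition~\ref{prop:base change for coverings}(2) to $h$ yields a natural quasi-isomorphism
$$
\RDer h_!\,f_!f^*\cmplx{\sheaf{F}}\wto\pi_!\pi^*\RDer h_!\cmplx{\sheaf{F}}.
$$
With $s\colon\Spec\algc{\FF}\mto\Spec\FF$ the inclusion of the geometric point, Proposition~\ref{prop:base change for coverings}(1) gives an isomorphism $s^*\pi_!\pi^*\RDer h_!\cmplx{\sheaf{F}}\isomorph\pi'_!(\pi')^*s^*\RDer h_!\cmplx{\sheaf{F}}$, where $\pi'$ denotes the base-changed covering. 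Since $\FF_{q^{k\ell^\infty}}\subset\algc{\FF}$, each finite stage of $\pi'$ is, after a choice of embedding, isomorphic to the trivial $\Gamma/U$-covering $\bigsqcup_{\Gamma/U}\Spec\algc{\FF}$ of $\Spec\algc{\FF}$. Passing to the limit and invoking Proposition~\ref{prop:alternative description of f_!f^*}, this identifies $\pi'_!(\pi')^*$ with $\ringtransf_{\Lambda[[\Gamma]]}$. The map $\eta$ is then obtained by composing these quasi-isomorphisms, applying $\Sect(\Spec\algc{\FF},-)$, and using Proposition~\ref{prop:compatibility of change of rings} to pull $\ringtransf_{\Lambda[[\Gamma]]}$ out of the section functor.

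\emph{Frobenius compatibility.} The action of $\Frob_\FF$ is induced throughout by its action on $\Spec\algc{\FF}$ via $s^*$. In the chain constructed above this action is strictly natural at every step \emph{except} for the trivialisation of $\pi'$. At a finite level $U$, with $n$ chosen so that $U$ contains the kernel of $\Gamma\to\Gal(\FF_{q^{k\ell^n}}/\FF_q)$, the trivialisation is the ring isomorphism
$$
\FF_{q^{k\ell^n}}\otimes_{\FF_q}\algc{\FF}\isomorph\prod_{\sigma\in\Gamma/U}\algc{\FF},\qquad a\tensor b\mapsto(\sigma(a)b)_{\sigma},
$$
and a direct calculation shows that it intertwines $\id\tensor\Frob_\FF$ on the left with the endomorphism of the right hand side that applies $\Frob_\FF$ componentwise \emph{and} permutes the index set by $\sigma\mapsto\sigma\gamma^{-1}$. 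Translating this permutation to the sheaf $(\pi_U)_!(\pi_U)^*\Lambda\isomorph\Lambda[\Gamma/U]$ via Proposition~\ref{prop:locally constant sheaves} and taking the inverse limit over $U$, it becomes precisely right multiplication by $\gamma^{-1}$ on $\Lambda[[\Gamma]]$, which for the $(\Lambda[[\Gamma]],\Lambda)$-bimodule structure on $\ringtransf_{\Lambda[[\Gamma]]}(-)$ is exactly the factor $\gamma^{-1}$ in the claimed $\gamma^{-1}\tensor\Frob_\FF$.

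\emph{Main obstacle.} The delicate point is the correct bookkeeping of the twist: one has to be consistent about left versus right $\Gamma$-actions, about the contragredient convention built into the definition of $f_!f^*$ (cf.\ Proposition~\ref{prop:locally constant sheaves}), and about the interplay between $\Frob_\FF\in\Gal(\algc{\FF}/\FF)$ and its image $\gamma\in\Gamma$. My strategy is to verify the statement first at each finite quotient $\Gamma/U$, where the trivialisation is an explicit isomorphism of a finite product of copies of $\algc{\FF}$, and only then to pass to the inverse limit; this last step is unproblematic because all finite-level terms are finite and the transition maps are surjective.
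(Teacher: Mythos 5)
Your proposal is correct and follows essentially the same route as the paper: reduce to the base $\Spec\FF$ via Proposition~\ref{prop:base change for coverings} (and to $\cmplx{\sheaf{F}}=\Lambda$), then identify the Frobenius action on the trivialised covering over $\algc{\FF}$ as right multiplication by $\gamma^{-1}$ on $\Lambda[[\Gamma]]$. The paper obtains this last identification by citing the contragredient-action convention of Proposition~\ref{prop:locally constant sheaves}, whereas you verify it by the explicit computation with $\FF_{q^{k\ell^n}}\otimes_{\FF_q}\algc{\FF}\isomorph\prod_{\sigma}\algc{\FF}$ — the same fact, just spelled out.
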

\begin{proof}
Using Proposition~\ref{prop:base change for coverings} we can reduce to the case $X=\Spec \FF$. Moreover, it suffices to consider $\cmplx{\sheaf{F}}=\Lambda$. By Proposition~\ref{prop:locally constant sheaves}, the sheaf $f_!f^*\Lambda$ corresponds to the $\Lambda[[\Gamma]]$-module $\Lambda[[\Gamma]]$ with the left action of the Frobenius $\Frob_{\FF}$ given by right multiplication with $\gamma^{-1}$. The assertion of the proposition is an immediate consequence.
\end{proof}

\section{The Iwasawa Main Conjecture}\label{sec:IMC}

The following theorem is the central piece of our analogue of the noncommutative Iwasawa main conjecture. It corresponds to \cite[Conjecture~5.1]{CFKSV} in conjunction with the vanishing of the $\mu$-invariant, the complex $\RDer\Sectc(X,f_!f^*\cmplx{\sheaf{F}})$ playing the role of the module $X(E/F_{\infty})$.

\begin{thm}\label{thm:S-torsion property}
Let $X$ be a separated scheme of finite type over a finite field
$\FF$. Fix a prime $\ell$ and let $(f\colon Y\mto X,G)$ be an admissible covering of
$X$ with group $G\isomorph H\rtimes \Gamma_{\ell^{\infty}}$. If $\Lambda$ is an adic
$\Int_{\ell}$-algebra and $\cmplx{\sheaf{F}}$ a complex in
$\cat{PDG}^{\cont}(X,\Lambda)$, then
$\RDer\Sectc(X,f_!f^*\cmplx{\sheaf{F}})$ is in
$\cat{PDG}^{\cont,w_H}(\Lambda[[G]])$.
\end{thm}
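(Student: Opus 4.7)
The plan is to verify condition (3) of Proposition~\ref{prop:characterisation of relative category}. Choose an open pro-$\ell$ subgroup $K$ of $H$ that is normal in $G$, and set $\Lambda_0 = \Lambda/\Jac(\Lambda)$ and $\cmplx{\sheaf{F}_0} = \ringtransf_{\Lambda_0}\cmplx{\sheaf{F}}$. Combining Propositions~\ref{prop:compatibility of change of rings} and~\ref{prop:ring change for coverings}(1), the complex $\ringtransf_{\Lambda_0[[G/K]]}\RDer\Sectc(X,f_!f^*\cmplx{\sheaf{F}})$ is quasi-isomorphic to $\RDer\Sectc(X,(f_K)_!(f_K)^*\cmplx{\sheaf{F}_0})$, so it suffices to show that this latter complex has finite cohomology groups.

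The semidirect product decomposition $G = H\rtimes\Gamma_{\ell^\infty}$ supplies an open subgroup $V = K\rtimes\Gamma_{\ell^\infty}\subset G$ of finite index $|H/K|$ satisfying $V\cdot H = G$ and $V\cap H = K$. The intermediate cover $f_V\colon Y_V\mto X$ is therefore finite \'etale of degree $|H/K|$, and the bijection of $G$-sets $G/K\mto G/V\times G/H$ yields an isomorphism $Y_K\isomorph Y_V\times_X Y_H$. In particular, the residual covering $f^V\colon Y_K\mto Y_V$, whose Galois group $V/K$ is isomorphic to $\Gamma_{\ell^\infty}$, is the base change of $f_H\colon Y_H\mto X$ along $f_V$. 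Since by admissibility $f_H$ is the cyclotomic $\Gamma_{\ell^\infty}$-covering of $X$, it follows that $f^V$ is the cyclotomic $\Gamma_{\ell^\infty}$-covering of $Y_V$.

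Applying Proposition~\ref{prop:ring change for coverings}(2) to the open subgroup $V/K$ of $G/K$ and then $\RDer\Sectc(X,-)$, we obtain a quasi-isomorphism of $\Lambda_0[[\Gamma_{\ell^\infty}]]$-complexes
\[
\RDer\Sectc\bigl(X,(f_K)_!(f_K)^*\cmplx{\sheaf{F}_0}\bigr)\wto\RDer\Sectc\bigl(Y_V,(f^V)_!(f^V)^*f_V^*\cmplx{\sheaf{F}_0}\bigr).
\]
Restriction of scalars from $\Lambda_0[[G/K]]$ to $\Lambda_0[[\Gamma_{\ell^\infty}]]$ preserves the underlying abelian groups, so finiteness of the cohomology of the right-hand side is sufficient. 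Since $f^V$ is cyclotomic, Proposition~\ref{prop:the Zl case} identifies this cohomology with $\varprojlim_n\HF_c^{i-1}(\algc{Y_V},f_V^*\cmplx{\sheaf{F}_0})/(1-\Frob_\FF^{\ell^n})$, which is finite because $Y_V$ is a separated $\FF$-scheme of finite type and $\Lambda_0$ is a finite ring. The main obstacle is the identification of $f^V$ as the cyclotomic cover of $Y_V$, which combines the admissibility hypothesis with the product decomposition $Y_K\isomorph Y_V\times_X Y_H$.
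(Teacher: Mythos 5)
Your argument is correct and follows the same overall strategy as the paper's proof: reduce via Proposition~\ref{prop:characterisation of relative category} to finite coefficients and to a finite group $H/K$, pass to an intermediate finite \'etale cover over which the residual covering becomes cyclotomic, and conclude with Proposition~\ref{prop:ring change for coverings}(2) and Proposition~\ref{prop:the Zl case}. The difference lies in the choice of that intermediate cover. The paper picks an open subgroup $\Gamma'\subset\Gamma_{\ell^{\infty}}$ which is \emph{central} in $G$ (possible because $H$ is finite after the reduction), so that $Y_{\Gamma'}$ lives over a finite extension $\FF'$ of $\FF$ and the residual covering is identified with the cyclotomic covering of $Y_{\Gamma'}$ over $\FF'$ by means of Lemma~\ref{lem:ismorphisms of G-coverings}. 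You instead take the complement $V=K\rtimes\Gamma_{\ell^{\infty}}$, which need not be normal in $G$ --- harmless, since Proposition~\ref{prop:ring change for coverings}(2) only requires an open subgroup --- and recognize the residual covering as the base change of the cyclotomic covering of $X$; this keeps everything over $\FF$ and avoids the centrality argument altogether. The one step you should spell out is the passage from the $G$-set bijection $G/K\mto G/V\times G/H$ to an isomorphism of principal coverings $Y_K\isomorph Y_V\times_X Y_H$ of $Y_V$ compatible with the identification $V/K\isomorph G/H$: the clean route is to write down the natural morphism of coverings (the pair of projections together with $V/K\mto G/H$) and invoke Lemma~\ref{lem:ismorphisms of G-coverings}, exactly as the paper does for its own identification. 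A second, purely bibliographic point: the reduction of the coefficient ring to $\Lambda/\Jac(\Lambda)$ also uses Proposition~\ref{prop:ring change for coverings II} (for the bimodule $\Lambda/\Jac(\Lambda)$ with trivial $G$-operation), not only Propositions~\ref{prop:compatibility of change of rings} and~\ref{prop:ring change for coverings}(1).
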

\begin{proof}
Proposition~\ref{prop:ring change for coverings}, Proposition~\ref{prop:ring change for coverings II} (for the $\Lambda/I$-$\Lambda$-bimodule $\Lambda/I$ with trivial $G$-operation), and
Proposition~\ref{prop:compatibility of change of rings} imply that for any
closed normal subgroup $K$ of $G$ and each open two-sided ideal
$I$ of $\Lambda$, there exists a quasi-isomorphism
\begin{align*}
\ringtransf_{\Lambda/I[[G/K]]}\RDer\Sectc(X,f_!f^*\cmplx{\sheaf{F}})
&\wto\RDer\Sectc(X,\ringtransf_{\Lambda/I[[G/K]]}f_!f^*\cmplx{\sheaf{F}})\\
&\wto\RDer\Sectc(X,{f_{K}}_!f_K^*\ringtransf_{\Lambda/I}\cmplx{\sheaf{F}}).
\end{align*}
Thus, by Proposition~\ref{prop:characterisation of relative
category}, we may assume that $\Lambda$ is a finite ring and that
$G\isomorph H\rtimes\Gamma_{\ell^{\infty}}$ with a finite group $H$. It then suffices to
show that $\RDer\Sectc(X,f_!f^*\cmplx{\sheaf{F}})$ has finite
cohomology groups.

There exists an open
subgroup $\Gamma'\subset \Gamma_{\ell^{\infty}}$ which lies in the centre of $G$.
The scheme $Y_{\Gamma'}$ is separated of finite type over a finite
extension $\FF'$ of $\FF$ and by Lemma~\ref{lem:ismorphisms of G-coverings} the principal covering $(f^{\Gamma'}\colon Y\mto Y_{\Gamma'},\Gamma')$ is isomorphic to the cyclotomic $\Gamma_{\ell^{\infty}}$-covering of $Y_{\Gamma'}$
over $\FF'$.

By Proposition~\ref{prop:ring change for coverings} the
cohomology groups of $\RDer\Sectc(X,f_!f^*\cmplx{\sheaf{F}})$ are
isomorphic to those of
$\RDer\Sectc(Y_{\Gamma'},f^{\Gamma'}_!{f^{\Gamma'}}^*f_{\Gamma'}^*\cmplx{\sheaf{F}})$
which are finite by Proposition~\ref{prop:the Zl case}.
\end{proof}

\begin{cor}\label{cor:Existence of L-function}
Under the same assumptions as above,
$$
\id-\Frob_{\FF}\colon
\RDer\Sectc(\algc{X},f_!f^*\cmplx{\sheaf{F}})\mto\RDer\Sectc(\algc{X},f_!f^*\cmplx{\sheaf{F}})
$$
is a quasi-isomorphism in $w_H\cat{PDG}^{\cont}(\Lambda[[G]])$ and
hence, gives rise to an element
$$
[\id-\Frob_{\FF}]\in\KTh_1(w_H\cat{PDG}^{\cont}(\Lambda[[G]]))
$$
satisfying
$$
d [\id-\Frob_{\FF}]=[\RDer\Sectc(X,f_!f^*\cmplx{\sheaf{F}})]
$$
in $\KTh_0(\cat{PDG}^{\cont,w_H}(\Lambda[[G]]))$.
\end{cor}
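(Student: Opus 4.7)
The plan is to deduce the corollary essentially formally from Theorem~\ref{thm:S-torsion property} together with the fundamental exact sequence of Proposition~\ref{prop:fundamental exact sequence}, and then to identify the image under $d$ using the explicit formula for the rightmost connecting homomorphism that is established in the appendix.

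First I would apply Proposition~\ref{prop:fundamental exact sequence} to the complex $f_!f^*\cmplx{\sheaf{F}}$, viewed as an object of $\cat{PDG}^{\cont}(X,\Lambda[[G]])$. This yields a short exact sequence
$$
0\mto\RDer\Sectc(X,f_!f^*\cmplx{\sheaf{F}})\mto\RDer\Sectc(\algc{X},f_!f^*\cmplx{\sheaf{F}})\xrightarrow{\id-\Frob_{\FF}}\RDer\Sectc(\algc{X},f_!f^*\cmplx{\sheaf{F}})\mto 0
$$
in $\cat{PDG}^{\cont}(\Lambda[[G]])$. As recalled in the proof of that proposition, the cone of $\id-\Frob_{\FF}$ is therefore quasi-isomorphic to a shift of $\RDer\Sectc(X,f_!f^*\cmplx{\sheaf{F}})$. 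By Theorem~\ref{thm:S-torsion property} this shift lies in $\cat{PDG}^{\cont,w_H}(\Lambda[[G]])$, since that subcategory is closed under shifts. By the very definition of the weak equivalences of $w_H\cat{PDG}^{\cont}(\Lambda[[G]])$, this means that $\id-\Frob_{\FF}$ is a weak equivalence in $w_H\cat{PDG}^{\cont}(\Lambda[[G]])$. Applying the description of $\KTh_1$ via endomorphism quasi-isomorphisms from Proposition~\ref{prop:presentation of K_1} (transported to the $w_H$-structure), we obtain the class $[\id-\Frob_{\FF}]\in\KTh_1(w_H\cat{PDG}^{\cont}(\Lambda[[G]]))$.

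Finally, to compute $d[\id-\Frob_{\FF}]$, I would invoke the explicit description of the boundary map of the localisation sequence that will be derived in the appendix. For an endomorphism quasi-isomorphism $\phi$ of an object $A$ in $\cat{PDG}^{\cont}(\Lambda[[G]])$ which becomes a weak equivalence in the $w_H$-structure, that description expresses $d[\phi]$ in terms of the fibre sequence $\mathrm{fib}(\phi)\to A \xrightarrow{\phi} A$. In our situation the short exact sequence above identifies $\mathrm{fib}(\id-\Frob_{\FF})$ with $\RDer\Sectc(X,f_!f^*\cmplx{\sheaf{F}})$ as an object of $\cat{PDG}^{\cont,w_H}(\Lambda[[G]])$, and the desired equality $d[\id-\Frob_{\FF}]=[\RDer\Sectc(X,f_!f^*\cmplx{\sheaf{F}})]$ then drops out. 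The main delicacy lies not in this corollary itself but in setting up the explicit formula for $d$ with the correct sign convention in the generality of $w_H\cat{PDG}^{\cont}(\Lambda[[G]])$, which is precisely what the appendix is designed to supply; once it is in place, the remaining steps here are formal.
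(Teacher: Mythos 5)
Your proposal is correct and follows essentially the same route as the paper: the fundamental exact sequence identifies the cone of $\id-\Frob_{\FF}$ with a shift of $\RDer\Sectc(X,f_!f^*\cmplx{\sheaf{F}})$, Theorem~\ref{thm:S-torsion property} places it in $\cat{PDG}^{\cont,w_H}(\Lambda[[G]])$, and the explicit formula $d(\alpha)=[\Cone(\alpha)]^{-1}[\Cone(\id_A)]$ from Theorem~\ref{thm:localisation thm} in the appendix gives the boundary computation.
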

\begin{proof}
By Proposition~\ref{prop:fundamental exact sequence}, the cone of $\id-\Frob_{\FF}$ is $\RDer\Sectc(X,f_!f^*\cmplx{\sheaf{F}})$ shifted by one. Hence, $\id-\Frob_{\FF}$ is a quasi-isomorphism in $w_H\cat{PDG}^{\cont}(\Lambda[[G]])$. Theorem~\ref{thm:localisation thm} then implies $d [\id-\Frob_{\FF}]=[\RDer\Sectc(X,f_!f^*\cmplx{\sheaf{F}})]$.
\end{proof}

\begin{defn}
We write $\ncL_G(X/\FF,\cmplx{\sheaf{F}})$ for the inverse of the element $[\id-\Frob_{\FF}]$.
\end{defn}

The element $\ncL_G(X/\FF,\cmplx{\sheaf{F}})$ may be thought of as our analogue of the noncommutative $\ell$-adic $L$-function that is conjectured to exist in \cite{CFKSV}. Note that the assignment $\cmplx{\sheaf{F}}\mapsto\ncL_G(X/\FF,\cmplx{\sheaf{F}})$ extends to a homomorphism
$$
\KTh_0(\cat{PDG}^{\cont}(X,\Lambda))\mto\KTh_1(w_H\cat{PDG}^{\cont}(\Lambda[[G]])).
$$
Moreover, $\ncL_G(X/\FF,\cmplx{\sheaf{F}})$ enjoys the following transformation properties.

\begin{thm}\label{thm:transformation properties}
Consider a separated scheme $X$ of finite type over a finite field $\FF$.
Let $\Lambda$ be any adic $\Int_{\ell}$ algebra and let $\cmplx{\sheaf{F}}$ be a complex in $\cat{PDG}^{\cont}(X,\Lambda)$.
\begin{enumerate}
\item Let $\Lambda'$ be another adic $\Int_{\ell}$-algebra.
For any complex $\cmplx{M}$ in $\Lambda[[G]]^{\op}$-$\cat{SP}(\Lambda')$, we have
$$
\ringtransf_{\cmplx{M[[G]]^{\delta}}}(\ncL_G(X/\FF,\cmplx{\sheaf{F}}))=\ncL_G(X/\FF,\ringtransf_{\cmplx{\widetilde{M}}}\cmplx{\sheaf{F}})
$$
in $\KTh_1(w_H\cat{PDG}^{\cont}(\Lambda'[[G]]))$.

\item Let $H'$ be a closed virtual pro-$\ell$-subgroup of $H$ which is normal in $G$. Then
$$
\ringtransf_{\Lambda[[G/H']]}(\ncL_G(X/\FF,\cmplx{\sheaf{F}}))=\ncL_{G/H'}(X/\FF,\cmplx{\sheaf{F}})
$$
in $\KTh_1(w_{H'}\cat{PDG}^{\cont}(\Lambda[[G/H']]))$.

\item Let $U$ be an open subgroup of $G$ and let $\FF'$ be the finite extension corresponding to the image of $U$ in $\Gamma_{\ell^{\infty}}$. Then
$$
\ringtransf_{\Lambda[[G]]}\big(\ncL_{G}(X/\FF,\cmplx{\sheaf{F}})\big)=\ncL_{U}(Y_U/\FF',f_U^*\cmplx{\sheaf{F}})
$$
in $\KTh_1(w_{H\cap U}\cat{PDG}^{\cont}(\Lambda[[U]]))$.
\end{enumerate}
\end{thm}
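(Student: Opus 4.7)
The three parts all follow the same strategy: apply the appropriate $\ringtransf$ functor to the defining equation
$$
\ncL_G(X/\FF,\sheaf{F})=[\id-\Frob_{\FF}]^{-1}
$$
and then identify the resulting complex via the propositions of Sections~\ref{sec:adic sheaves induced by coverings} and~\ref{sec:perf complexes}. In each case the point is that all the identifications are natural, and therefore intertwine $\Frob_{\FF}$.

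For (1), I apply $\ringtransf_{M[[G]]^{\delta}}$. Proposition~\ref{prop:compatibility of change of rings} shows that this commutes naturally with $\RDer\Sectc(\algc{X},-)$, while Proposition~\ref{prop:ring change for coverings II} yields a natural isomorphism
$$
\ringtransf_{M[[G]]^{\delta}} f_!f^* \cmplx{\sheaf{F}} \isomorph f_!f^* \ringtransf_{\widetilde{M}} \cmplx{\sheaf{F}}.
$$
Passing to inverses in $\KTh_1$ gives the claim. Part (2) is entirely analogous, using Proposition~\ref{prop:ring change for coverings}(1) to identify $\ringtransf_{\Lambda[[G/H']]} f_!f^* \cmplx{\sheaf{F}} \isomorph (f_{H'})_!(f_{H'})^* \cmplx{\sheaf{F}}$.

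For (3), the main input is Proposition~\ref{prop:ring change for coverings}(2), which combined with Proposition~\ref{prop:compatibility of change of rings} and the functoriality of $\RDer\Sectc$ produces a natural quasi-isomorphism
$$
\ringtransf_{\Lambda[[G]]} \RDer\Sectc(\algc{X}, f_!f^* \cmplx{\sheaf{F}}) \isomorph \RDer\Sectc(Y_U \times_\FF \algc{\FF}, (f^U)_!(f^U)^* f_U^* \cmplx{\sheaf{F}}).
$$
The subtlety is that the right hand side is computed over $Y_U \times_\FF \algc{\FF}$ rather than over $\algc{Y_U}=Y_U\times_{\FF'}\algc{\FF}$. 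Setting $d=[\FF':\FF]$ and fixing an embedding $\FF'\hookrightarrow\algc{\FF}$, the base change $Y_U\times_\FF\algc{\FF}$ decomposes as the disjoint union of $d$ copies of $\algc{Y_U}$ indexed by $\Hom_\FF(\FF',\algc{\FF})$; the geometric Frobenius $\Frob_{\FF}$ cyclically permutes these components while $\Frob_{\FF}^d=\Frob_{\FF'}$ acts on a single factor after a full cycle. With respect to the induced direct-sum decomposition $V^{\oplus d}$, where $V=\RDer\Sectc(\algc{Y_U}, (f^U)_!(f^U)^* f_U^* \cmplx{\sheaf{F}})$, the endomorphism $\id-\Frob_{\FF}$ takes the block form with $\id$ on the diagonal, $-\id$ on the subdiagonal, and $-\Frob_{\FF'}$ in the upper-right corner. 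By Proposition~\ref{prop:presentation of K_1}, successive elementary row operations do not alter the class in $\KTh_1$, and they reduce this matrix to upper-triangular form with diagonal entries $\id,\ldots,\id,\id-\Frob_{\FF'}$, whose $\KTh_1$-class is $[\id-\Frob_{\FF'}]$. Taking inverses completes the proof.

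The main obstacle is this Shapiro-lemma-type matrix identity in part (3); the rest of the argument is a direct compilation of the natural identifications established in the previous sections.
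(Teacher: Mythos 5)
Your argument is correct and follows the paper's own proof essentially verbatim: parts (1) and (2) are the same compilation of Propositions~\ref{prop:compatibility of change of rings}, \ref{prop:ring change for coverings II} and \ref{prop:ring change for coverings}, and for part (3) the paper performs exactly the same computation, identifying $\Sect(\Spec\algc{\FF},b_!\sheaf{F})\isomorph\Sect(\Spec\algc{\FF},\sheaf{F})^{[\FF':\FF]}$ with $\Frob_{\FF}$ acting by the companion-type matrix and reducing $\id-M$ by elementary row and column operations to the diagonal form with last entry $\id-\Frob_{\FF'}$. The only point worth making explicit is the appeal to Proposition~\ref{prop:example functors} (via Proposition~\ref{prop:localising bimodules}) to ensure that the functors $\ringtransf_{M[[G]]^{\delta}}$, $\ringtransf_{\Lambda[[G/H']]}$ and $\ringtransf_{\Lambda[[G]]}$ preserve the relevant $w$-subcategories and hence genuinely induce homomorphisms on the localised $\KTh_1$-groups.
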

\begin{proof}
Assertions (1) and (2) follow from Proposition~\ref{prop:ring change for coverings II} and Proposition~\ref{prop:ring change for coverings}, respectively, in conjunction with Proposition~\ref{prop:example functors} and Proposition~\ref{prop:compatibility of change of rings}. For Assertion (3) we need the following additional reasoning. Consider the commutative diagram
$$
\xymatrix{
Y_U\ar[r]^g\ar[dr]_{f_U}&X\times_\FF \FF'\ar[r]^{a'}\ar[d]_{b'}&\Spec \FF'\ar[d]_b\\
                        &X\ar[r]^{a}                          &\Spec \FF
}
$$
There exists a chain of quasi-isomorphisms
$$
\RDer a_! \RDer {f_U}_!f^U_!{f^U}^*f_U^*\cmplx{\sheaf{F}}\sim b_!\RDer (a'\comp g)_!f^U_!{f^U}^*f_U^*\cmplx{\sheaf{F}}
$$
in $\cat{PDG}^{\cont}(\Spec \FF,\Lambda[[U]])$. Therefore, it suffices to understand the operation of the Frobenius $\Frob_{\FF}$ on $\Sect(\Spec \algc{\FF},b_!\sheaf{F})$ for any \'etale sheaf $\sheaf{F}$ on $\Spec \FF'$. Choosing an embedding of $\FF'$ into $\algc{\FF}$ we obtain an isomorphism
$$
\Sect(\Spec \algc{\FF},b_!\sheaf{F})\isomorph\Sect(\Spec \algc{\FF},\sheaf{F})^{[\FF':\FF]}
$$
under which the Frobenius $\Frob_{\FF}$ on the left-hand side corresponds to multiplication with the matrix
$$
M=
\begin{pmatrix}
0&\hdotsfor{2}&0&\Frob_{\FF'}\\
\id&0&\hdotsfor{2}&0\\
0&\id&0&\hdots&0\\
\vdots&\ddots&\ddots&\ddots&\vdots\\
0&\hdots&0&\id&0
\end{pmatrix}
$$
on the righthand side.
Using only elementary row and column operations one can transform $\id-M$ into the matrix
$$
\begin{pmatrix}
\id& 0 &\hdotsfor{2}&0\\
0&\id & 0&\hdots &0\\
\vdots&\ddots&\ddots&\ddots&\vdots\\
0&\hdots&0&\id& 0\\
0&\hdots&0& 0&\id-\Frob_{\FF'}
\end{pmatrix}.
$$
Since these elementary operations have trivial image in the first $\KTh$-group, we conclude
$$
[\id-\Frob_{\FF}\colon \RDer\Sectc(Y_U\times_{\FF}\algc{\FF},f^U_!{f^U}^*f_U^*\cmplx{\sheaf{F}})]=
[\id-\Frob_{\FF'}\colon \RDer\Sectc(Y_U\times_{\FF'}\algc{\FF},f^U_!{f^U}^*f_U^*\cmplx{\sheaf{F}})]
$$
in $\KTh_1(w_{H\cap U}\cat{PDG}^{\cont}(\Lambda[[U]]))$, from which the assertion follows.
\end{proof}

Let now $\Omega$ be a commutative adic $\Int_{\ell}$-algebra. Consider the set
$$
P=\{f(T)\in\Omega[T]\colon f(0)\in \Omega^{\times}\}
$$
in the polynomial ring $\Omega[T]$. Then $\Omega[T]_P$ is a commutative semilocal ring and the natural homomorphism of $\KTh_1(\Omega[T]_P)=\Omega[T]_P^\times$ to $\KTh_1(\Omega[[T]])=\Omega[[T]]^\times$ is an injection. Furthermore, let $k$ be prime to $\ell$ and
$$
S=\{f\in \Omega[[\Gamma_{k\ell^{\infty}}]]\colon \text{$[f]\in \Omega/\Jac(\Omega)[[\Gamma_{k\ell^{\infty}}]]$ is a nonzerodivisor}\}
$$
be the set in $\Omega[[\Gamma_{k\ell^{\infty}}]]$ that we considered in Lemma~\ref{lem:K_1 in the commutative case}. We write $\gamma$ for the image of the Frobenius $\Frob_{\FF}$ in $\Gamma_{k\ell^{\infty}}$.

\begin{lem}
The homomorphism
$$
\Omega[T]\mto \Omega[[\Gamma_{k\ell^{\infty}}]],\qquad T\mapsto \gamma^{-1},
$$
maps $P$ into $S$.
\end{lem}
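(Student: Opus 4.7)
The plan is to reduce the statement to a computation in a complete discrete valuation ring and then verify the nonzerodivisor property by computing a $\tau$-adic valuation.

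First I would observe that membership in $S$ only depends on the image modulo $\Jac(\Omega)$, so we may replace $\Omega$ by $\bar{\Omega}=\Omega/\Jac(\Omega)$. Since $\Omega$ is adic and commutative, $\bar{\Omega}$ is a finite commutative semisimple ring, hence a finite product of finite fields. The $\Int_{\ell}$-algebra structure forces each factor $\FF_i$ to have characteristic $\ell$: the kernel of the induced ring map $\Int_{\ell}\mto\FF_i$ is a prime ideal that cannot be $(0)$ (since $\Int_{\ell}$ is infinite and $\FF_i$ finite), so it must be the unique nonzero prime $(\ell)\subset\Int_{\ell}$. As nonzerodivisor is a componentwise condition in a finite product, we may further assume that $\Omega$ is a single finite field of characteristic $\ell$.

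Next, I would use the decomposition $\Gamma_{k\ell^{\infty}}=\Delta\times\Gamma$ with $\Delta\isomorph\Int/k\Int$ and $\Gamma\isomorph\Int_{\ell}$ to write $\Omega[[\Gamma_{k\ell^{\infty}}]]=\Omega[\Delta][[\Gamma]]$. Since $\gcd(k,\ell)=1$, the group ring $\Omega[\Delta]$ is \'etale over $\Omega$ and splits as a finite product $\prod_i \FF_i$ of finite field extensions of $\Omega$, the image of the generator of $\Delta$ in $\FF_i$ being some $k$-th root of unity $\zeta_i$. Hence $\Omega[[\Gamma_{k\ell^{\infty}}]]\isomorph\prod_i\FF_i[[\Gamma]]$ is a product of complete discrete valuation rings, each identified with $\FF_i[[\tau]]$ via a topological generator $u$ of $\Gamma$ and the variable $\tau=u-1$. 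It suffices to check that $f(\gamma^{-1})$ is a nonzerodivisor in each factor.

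In the $i$-th factor the element $\gamma$ acts as $\zeta_i u$, so $\gamma^{-1}=\zeta_i^{-1}u^{-1}\equiv\zeta_i^{-1}\pmod{\tau}$, while $\gamma^{-1}-\zeta_i^{-1}=-\zeta_i^{-1}\tau(1+\tau)^{-1}$ has $\tau$-adic valuation exactly $1$. The hypothesis $f(0)\in\Omega^{\times}$ ensures that the reduction $\bar{f}\in\FF_i[T]$ is a nonzero polynomial, so it factors as $\bar{f}(T)=(T-\zeta_i^{-1})^m h(T)$ with $h(\zeta_i^{-1})\neq 0$ and $m\geq 0$ finite. Substituting $T=\gamma^{-1}$ yields $f(\gamma^{-1})=(\gamma^{-1}-\zeta_i^{-1})^m h(\gamma^{-1})$, which equals $\tau^m$ times a unit of $\FF_i[[\tau]]$ (since $h(\gamma^{-1})\equiv h(\zeta_i^{-1})\neq 0\pmod{\tau}$), and is therefore a nonzerodivisor.

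The main point requiring care is the reduction step: once we are in a formal power series ring over a finite field of characteristic $\ell$, the only way $f(\gamma^{-1})$ can fail to be a nonzerodivisor is by vanishing, and the condition $f(0)\in\Omega^{\times}$ is exactly what prevents the constant term of the reduction from vanishing and hence keeps $\bar{f}$ itself nonzero.
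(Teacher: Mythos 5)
Your proof is correct and follows essentially the same route as the paper: reduce modulo $\Jac(\Omega)$ to a finite field, split off the prime-to-$\ell$ part of $\Gamma_{k\ell^{\infty}}$ to land in a product of power series rings $\FF_i[[\tau]]$, and check that $f(0)\in\Omega^{\times}$ forces the image of $f(\gamma^{-1})$ to be nonzero in each of these domains. The only cosmetic difference is that the paper enlarges $\Omega$ and decomposes via characters of $\Int/k\Int$ and then simply notes that a nonzero polynomial in $u(T+1)$ is nonzero in $\Omega[[T]]$, whereas you split $\Omega[\Delta]$ into residue field extensions and compute the $\tau$-adic valuation explicitly.
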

\begin{proof}
We can replace $\Omega$ by $\Omega/\Jac(\Omega)$, which is a finite product of finite fields. By considering each component separately, we may assume that $\Omega$ is a finite field. Enlarging $\Omega$ if necessary, we have an isomorphism
$$
\Omega[[\Gamma_{k\ell^{\infty}}]]\isomorph \prod_{\chi\colon \Int/k\Int\mto \Omega^\times}\Omega[[\Gamma_{\ell^{\infty}}]].
$$
Recall that
$$
\Omega[[\Gamma_{\ell^{\infty}}]]\mto \Omega[[T]],\qquad \gamma^{-1}\mapsto T+1,
$$
is an isomorphism. Now it suffices to remark that for any nonzero polynomial $f(T)\in\Omega[T]$ and any $u\in\Omega^\times$, $f(u(T+1))$ is again a nonzero polynomial.
\end{proof}

Extending the classical definition, we define as in \cite{Witte:NoncommutativeLFunctions} the $L$-function for any $\cmplx{\sheaf{F}}$ in $\cat{PDG}^{\cont}(X,\Omega)$ as the element
$$
L(\cmplx{\sheaf{F}},T)=\prod_{x\in X^0}[\id-T^{\deg(x)}\Frob_{k(x)}\colon \ringtransf_{\Omega[[T]]}(\cmplx{\sheaf{F}}_\xi)]^{-1}\in \KTh_1(\Omega[[T]])\isomorph \Omega[[T]]^{\times}.
$$
Here, the product extends over the set $X^0$ of closed points of $X$,
$$
\Frob_{k(x)}=\Frob_{\FF}^{\deg(x)}\in \Gal(\algc{k(x)}/k(x))
$$
denotes the geometric Frobenius of the residue field $k(x)$, and $\xi$ is a geometric point over $x$.

We are ready to establish the link between the classical $L$-function and the element $\ncL_G(X/\FF,\cmplx{\sheaf{F}})$. For this, it is essential to impose the additional condition that $\ell$ is different from the characteristic of $\FF$.

\begin{thm}\label{thm:link to classical L-function}
Let $X$ be a separated scheme in $\cat{Sch}_\FF$, let $\ell$ be different from the characteristic of $\FF$, and let $(f\colon Y\mto X,G)$ be an $\ell$-admissible principal covering containing the cyclotomic $\Gamma_{k\ell^{\infty}}$-covering.  Furthermore, let $\Lambda$ and $\Omega$ be adic $\Int_{\ell}$-algebras with $\Omega$ commutative. For every $\cmplx{\sheaf{F}}$ in $\cat{PDG}^{\cont}(X,\Lambda)$ and every $\cmplx{M}$ in $\Lambda[[G]]^\op$-$\cat{SP}(\Omega)$, we have
$L(\ringtransf_{\cmplx{\widetilde{M}}}(\cmplx{\sheaf{F}}),T)\in\KTh_1(\Omega[T]_P)$ and
$$
\ringtransf_{\Omega[[\Gamma_{k\ell^{\infty}}]]}\ringtransf_{\cmplx{M[[G]]^{\delta}}}\big(\ncL_{G}(X/\FF,\cmplx{\sheaf{F}})\big)= L(\ringtransf_{\cmplx{\widetilde{M}}}(\cmplx{\sheaf{F}}),\gamma^{-1})
$$
in $\KTh_1(\Omega[[\Gamma_{k\ell^{\infty}}]]_S)$.
\end{thm}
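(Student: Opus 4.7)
The plan is to reduce the statement to the case of the cyclotomic $\Gamma_{k\ell^{\infty}}$-covering and then identify both sides by means of a Grothendieck-Lefschetz trace formula in $\KTh_1$, interpreting the element $\id-\Frob_{\FF}$ acting on the cohomology as a specialization of $\id-T\Frob_{\FF}$.

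First, set $\sheaf{G}=\ringtransf_{\widetilde{M}}\sheaf{F}$, which lies in $\cat{PDG}^{\cont}(X,\Omega)$. By Theorem~\ref{thm:transformation properties}(1), the left-hand side equals
$$
\ringtransf_{\Omega[[\Gamma_{k\ell^{\infty}}]]}\bigl(\ncL_G(X/\FF,\sheaf{G})\bigr).
$$
Since $G$ contains the cyclotomic $\Gamma_{k\ell^{\infty}}$-covering, there is a closed virtually pro-$\ell$ normal subgroup $H'\subset G$ with $G/H'\isomorph \Gamma_{k\ell^{\infty}}$, so Theorem~\ref{thm:transformation properties}(2) further rewrites this as $\ncL_{\Gamma_{k\ell^{\infty}}}(X/\FF,\sheaf{G})$. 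We may thus assume $G=\Gamma_{k\ell^{\infty}}$ and $H=1$, and by Lemma~\ref{lem:K_1 in the commutative case} all computations take place in $\Omega[[\Gamma_{k\ell^{\infty}}]]_S^{\times}\isomorph\KTh_1(w_1\cat{PDG}^{\cont}(\Omega[[\Gamma_{k\ell^{\infty}}]]))$.

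Next, I would apply Proposition~\ref{prop:twisted Frob operation} to translate the cyclotomic Frobenius: the quasi-isomorphism $\eta$ identifies the endomorphism $\id-\Frob_{\FF}$ of $\RDer\Sectc(\algc{X},f_!f^*\sheaf{G})$ with the endomorphism $\id-\gamma^{-1}\Frob_{\FF}$ of the complex $\ringtransf_{\Omega[[\Gamma_{k\ell^{\infty}}]]}\RDer\Sectc(\algc{X},\sheaf{G})$. Hence
$$
\ncL_{\Gamma_{k\ell^{\infty}}}(X/\FF,\sheaf{G})^{-1}=\bigl[\id-\gamma^{-1}\Frob_{\FF}:\ringtransf_{\Omega[[\Gamma_{k\ell^{\infty}}]]}\RDer\Sectc(\algc{X},\sheaf{G})\bigr].
$$
On the $L$-function side, the noncommutative Grothendieck trace formula from \cite{Witte:NoncommutativeLFunctions} gives, as an element of $\KTh_1(\Omega[[T]])=\Omega[[T]]^{\times}$,
$$
L(\sheaf{G},T)=\bigl[\id-T\Frob_{\FF}:\ringtransf_{\Omega[[T]]}\RDer\Sectc(\algc{X},\sheaf{G})\bigr]^{-1}.
$$
Because $\Omega$ is commutative and adic (hence noetherian) and the complex $\RDer\Sectc(\algc{X},\sheaf{G})$ is perfect over $\Omega$, each $\HF_c^i(\algc{X},\sheaf{G})$ is finitely generated over $\Omega$, so $\det(\id-T\Frob_{\FF}\mid \HF_c^i)$ is a polynomial with constant term $1$. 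This shows $L(\sheaf{G},T)\in \KTh_1(\Omega[T]_P)=\Omega[T]_P^{\times}$, settling the first assertion. Transporting along the ring map $\Omega[T]_P\mto \Omega[[\Gamma_{k\ell^{\infty}}]]_S$, $T\mapsto \gamma^{-1}$, which exists by the preceding lemma, yields the specialization $L(\sheaf{G},\gamma^{-1})=[\id-\gamma^{-1}\Frob_{\FF}]^{-1}$, matching the expression for $\ncL_{\Gamma_{k\ell^{\infty}}}(X/\FF,\sheaf{G})$.

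The main obstacle is the identification of $L(\sheaf{G},T)$ with the $\KTh_1$-class $[\id-T\Frob_{\FF}]^{-1}$; everything else is formal manipulation of the transformation rules for $\ncL$ and of Proposition~\ref{prop:twisted Frob operation}. This trace formula is not proved in the present text but is the central content of the companion paper \cite{Witte:NoncommutativeLFunctions}; the role of the hypothesis $\ell\neq \operatorname{char}\FF$ is precisely to ensure that the Euler product for $L(\sheaf{G},T)$ converges to the characteristic class of Frobenius on the \'etale compact cohomology. Once this is invoked, the naturality of $\ringtransf_{(-)}$ under the base change $T\mapsto \gamma^{-1}$ (via Proposition~\ref{prop:compatibility of change of rings}) produces the claimed equality in $\KTh_1(\Omega[[\Gamma_{k\ell^{\infty}}]]_S)$.
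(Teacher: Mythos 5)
Your overall strategy coincides with the paper's: reduce to $G=\Gamma_{k\ell^{\infty}}$, $\Lambda=\Omega$, $\cmplx{M}=\Omega$ via Theorem~\ref{thm:transformation properties}, express $\ncL$ through Proposition~\ref{prop:twisted Frob operation} and $L(\sheaf{G},T)$ through the Grothendieck trace formula, and then relate the two via $\KTh_1(\Omega[T]_P)$. However, the two steps you treat as routine --- that $L(\sheaf{G},T)$ lies in $\KTh_1(\Omega[T]_P)$, and that its ``specialization'' at $T=\gamma^{-1}$ equals $[\id-\gamma^{-1}\Frob_{\FF}]^{-1}$ --- are exactly where the missing idea sits. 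Note first that there is no ring homomorphism $\Omega[[T]]\mto\Omega[[\Gamma_{k\ell^{\infty}}]]_S$ sending $T$ to $\gamma^{-1}$ (the element $\gamma^{-1}$ is not topologically nilpotent), so one cannot specialize an element of $\Omega[[T]]^{\times}$; one must exhibit an honest common lift in $\KTh_1(\Omega[T]_P)$ and push it into the two target groups. Your argument for membership in $\Omega[T]_P^{\times}$ via $\det(\id-T\Frob_{\FF}\mid \HF_c^i)$ does not work as stated: the cohomology modules are finitely generated but need not be projective over $\Omega$, and in any case the class $[\id-T\Frob_{\FF}]$ in $\KTh_1$ is defined on the level of the complex (here the Godement resolution, whose terms are enormous and not strictly perfect), not degreewise on cohomology.

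The paper's fix, which your proposal needs, is to choose a strictly perfect complex $\cmplx{Q}$ of $\Omega$-modules with an endomorphism $f$ and a quasi-isomorphism $q\colon\cmplx{Q}\wto\RDer\Sectc(\algc{X},\cmplx{\sheaf{G}})$ intertwining $f$ with $\Frob_{\FF}$ up to homotopy. Then $\id-Tf$ on $\Omega[T]\tensor_{\Omega}\cmplx{Q}$ is a weak equivalence in the localised category $p\cat{SP}(\Omega[T])$ and so defines a class in $\KTh_1(\Omega[T]_P)$; by Proposition~\ref{prop:presentation of K_1} its images under $T\mapsto T$ and $T\mapsto\gamma^{-1}$ are $L(\sheaf{G},T)^{-1}$ (using the trace formula and injectivity of $\KTh_1(\Omega[T]_P)\mto\KTh_1(\Omega[[T]])$) and $\ncL_{\Gamma_{k\ell^{\infty}}}(X/\FF,\sheaf{G})^{-1}$ (using Proposition~\ref{prop:twisted Frob operation}), respectively. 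With this replacement inserted, your reduction steps and the use of the trace formula from \cite{Witte:NoncommutativeLFunctions} are exactly as in the paper, and the proof closes.
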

\begin{proof}
By Theorem~\ref{thm:transformation properties} it suffices to consider the case $G=\Gamma_{k\ell^{\infty}}$, $\Lambda=\Omega$, and $\cmplx{M}=\Omega$. Let $p\cat{SP}(\Omega[T])$ be the Waldhausen category of strictly perfect complexes of $\Omega[T]$-modules with quasi-isomorphisms being the morphisms which become quasi-isomorphisms in $\cat{SP}(\Omega[[T]])$, that means, precisely those whose cone has $P$-torsion cohomology groups. Then
$$
\KTh_n(p\cat{SP}(\Omega[T]))=\KTh_n(\Omega[T]_P)
$$
for $n\geq 1$ according to \cite{WeibYao:Localization}. It is easy to show that there exists a strictly perfect complex of $\Omega$-modules $\cmplx{Q}$ with an endomorphism $f$ and a quasi-isomorphism $q\colon \cmplx{Q}\mto \RDer\Sectc(\algc{X},\cmplx{\sheaf{F}})$ such that the following diagram commutes up to homotopy:
$$
\xymatrix
{
\cmplx{Q}\ar[r]^{q}\ar[d]^{f}&\RDer\Sectc(\algc{X},\cmplx{\sheaf{F}})\ar[d]^{\Frob_{\FF}}\\
\cmplx{Q}\ar[r]^{q}&\RDer\Sectc(\algc{X},\cmplx{\sheaf{F}})
}
$$
(see e.\,g.\ \cite[Lemma~3.3.2]{Witte:PhD}). By the Grothendieck trace formula \cite[Fonction $L$ mod $\ell^n$, Theorem~2.2]{SGA4h} (or also \cite[Theorem~7.2]{Witte:NoncommutativeLFunctions}) we know that
$$
L(\cmplx{\sheaf{F}},T)=[\id-T\Frob_{\FF}\colon\ringtransf_{\Omega[[T]]}\RDer\Sectc(\algc{X},\cmplx{\sheaf{F}})]^{-1}
$$
in $\KTh_1(\Omega[[T]])$.
Hence, the above homotopy-commutative diagram implies
$$
L(\cmplx{\sheaf{F}},T)=[\id-Tf\colon \ringtransf_{\Omega[[T]]}\cmplx{Q}]^{-1}
$$
in $\KTh_1(\Omega[[T]])$ and by Proposition~\ref{prop:twisted Frob operation} also
$$
\ncL_{\Gamma_{k\ell^{\infty}}}(X/\FF,\cmplx{\sheaf{F}})=[\id-\gamma^{-1}f\colon \cmplx{Q}]^{-1}
$$
in $\KTh_1(\Omega[[\Gamma_{k\ell^{\infty}}]]_S)$. Hence, $L(\cmplx{\sheaf{F}},T)$ and $\ncL_{\Gamma_{k\ell^{\infty}}}(X/\FF,\cmplx{\sheaf{F}})$ are the images of the element $[\id-Tf\colon \Omega[T]\tensor_{\Omega}\cmplx{Q}]^{-1}$ under the homomorphisms
$$
\KTh_1(\Omega[T]_P)\mto \KTh_1(\Omega[[T]]),\qquad \KTh_1(\Omega[T]_P)\mto \KTh_1(\Omega[[\Gamma_{k\ell^{\infty}}]]_S),
$$
respectively.
\end{proof}

\begin{rem}
If $\Omega$ is noncommutative we do not expect that
$$
\KTh_1(p\cat{SP}(\Omega[T]))\mto \KTh_1(\Omega[[T]])
$$
is always injective. However, the construction in the above proof identifies a canonical preimage of $L(\cmplx{\sheaf{F}},T)$ in the group $\KTh_1(p\cat{SP}(\Omega[T]))$. Indeed, one checks that $[\id-\gamma^{-1}f\colon \cmplx{Q}]$ does not depend on the particular choice of $\cmplx{Q}$ and $f$. This preimage can then be used to extend Theorem~\ref{thm:link to classical L-function} to noncommutative rings $\Omega$.
\end{rem}

Theorem~\ref{thm:special case NCIMC} in the introduction is easily seen to be a special case of Theorem~\ref{thm:S-torsion property}, Corollary~\ref{cor:Existence of L-function}, and Theorem~\ref{thm:link to classical L-function} for $\Lambda=\Int_{\ell}$ and $(f\colon Y\mto X,G)$ being a subcovering of the universal covering of a connected scheme $X$.

\begin{rem}
As shown in \cite[Cor.~3.3]{Witte:Splitting}, one can construct for $i\geq 0$ a section
$$
s\colon \KTh_i(\cat{PDG}^{\cont,w_H}(\Lambda[[G]]))\mto\KTh_{i+1}(w_H\cat{PDG}^{\cont}(\Lambda[[G]]))$$
 of the boundary homomorphism $d$. We can use $s$ to define an ``algebraic $L$-function''
$$
\ncL^{\mathrm{alg}}_G(\cmplx{M})=s(\cmplx{M})\in \KTh_1(w_H\cat{PDG}^{\cont}(\Lambda[[G]]))
$$
for any complex $\cmplx{M}\in \cat{PDG}^{\cont,w_H}(\Lambda[[G]])$, extending the definition of Burns in \cite{Burns:AlgebraicLfunctions}. The quotient
$
u(\cmplx{\sheaf{F}})=\ncL_G(\cmplx{\sheaf{F}})/\ncL^{\mathrm{alg}}_G(\RDer\Sectc(X,\cmplx{\sheaf{F}}))
$
is then a well-defined element of $\KTh_1(\Lambda[[G]])$. In general, $u(\cmplx{\sheaf{F}})$ will not be trivial. If for example $X=\Spec \FF$ with a finite field $\FF$ with $q$ elements, $\ell$ is a prime not dividing $q(q-1)$, $G=\Gamma_{\ell^{\infty}}$, $\cmplx{\sheaf{F}}=\Int_{\ell}(1)$, then $\ncL_G(\Int_{\ell}(1))=(1-q^{-1}\gamma^{-1})^{-1}$ is a unit in $\Int_{\ell}[[\Gamma_{\ell^{\infty}}]]$. This implies that the class of $\RDer\Sectc(\Spec \FF,\Int_{\ell}(1))$ in
\begin{align*}
\KTh_0(\cat{PDG}^{\cont,w_H}(\Int_{\ell}[[\Gamma_{\ell^{\infty}}]]))&=
\KTh_0(\Int_{\ell}[[\Gamma_{\ell^{\infty}}]],\Int_{\ell}[[\Gamma_{\ell^{\infty}}]]_{(\ell)})\\
&=
\Int_{\ell}[[\Gamma_{\ell^{\infty}}]]_{(\ell)}^{\times}/\Int_{\ell}[[\Gamma_{\ell^{\infty}}]]^{\times}
\end{align*}
is trivial and hence, the algebraic $L$-function $\ncL^{\mathrm{alg}}_G(\RDer\Sectc(\Spec \FF,\Int_{\ell}(1)))$ must be trivial, too. Thus, $u(\Int_{\ell}(1))=(1-q^{-1}\gamma^{-1})^{-1}\neq 1$ in this case. It would be interesting to find more explicit descriptions of $u(\cmplx{\sheaf{F}})$ and of its image under the natural projection map $\KTh_1(\Lambda[[G]])\mto \KTh_1(\Lambda)$.
\end{rem}

\section*{Appendix}

Let $\cat{wW}$ be a Waldhausen category with weak equivalences
$\cat{w}$, and let $\cat{vW}$ be the same category with the same
notion of cofibrations, but with a coarser notion
$\cat{v}\subset\cat{w}$ of weak equivalences. We assume that
$\cat{wW}$ is \emph{saturated} and \emph{extensional}, i.\,e.\
\begin{enumerate}
\item if $f$ and $g$ are composable and any two of the morphisms
$f$, $g$ and $g\comp f$ are in $\cat{w}$, then so is the third;

\item if the two outer components of a morphism of exact sequences
in $\cat{wW}$ are in $\cat{w}$, then so is the middle one.
\end{enumerate}

We denote by $\cat{vW^w}$ the full subcategory of $\cat{vW}$
consisting of those objects $A$ such that $0\mto A$ is in
$\cat{w}$. With the notions of cofibrations and weak equivalences
in $\cat{vW}$, this subcategory is again a Waldhausen category.

Under the additional assumption that there exists an appropriate
notion of cylinder functors in $\cat{vW}$ and $\cat{wW}$, which we
will explain below, Waldhausen's localisation theorem
\cite{ThTr:HAKTS+DC}, Theorem~1.8.2, states that the natural
inclusion functors $\cat{vW^w}\mto\cat{vW}\mto\cat{wW}$ induce a
homotopy fibre sequence of the associated $\KTh$-theory spaces and
hence, a long exact sequence
\begin{align*}
\dotso&\mto \KTh_n(\cat{vW^w})\mto \KTh_{n}(\cat{vW})\mto
\KTh_{n}(\cat{wW})\xrightarrow{d}
\KTh_{n-1}(\cat{vW^w})\mto \dotso\\
\dotso&\mto \KTh_1(\cat{vW})\mto \KTh_1(\cat{wW})\xrightarrow{d}
\KTh_0(\cat{vW^w})\mto \KTh_0(\cat{vW})\mto \KTh_0(\cat{wW})\mto
0.
\end{align*}
In this appendix, we will give an explicit description of the
connecting homomorphism
$d\colon\KTh_1(\cat{wW})\mto\KTh_0(\cat{vW^w})$ in terms of the
1-types of the Waldhausen categories, as defined in \cite{MT:1TWKTS}.
A similar description has also been derived in \cite[Theorem~4.1]{Staffeldt} (up to some obvious sign errors) using more sophisticated arguments.

We begin by recalling the definition of a cylinder functor. For
any Waldhausen category $\cat{W}$, the \emph{category of
morphisms} $\cat{Mor}(\cat{W})$ is again a Waldhausen category
with the following cofibrations and weak equivalences. A morphism
$\alpha\mto \beta$ in $\cat{Mor}(\cat{W})$, i.\,e.\ a commutative
square
$$
\xymatrix{ A \ar[r]^{\epsilon}\ar[d]_{\alpha}&B\ar[d]^{\beta}\\
A'\phantom{,}\ar[r]_{\epsilon'}&B',}
$$
is a cofibration if both $\epsilon$ and $\epsilon'$ are
cofibrations. It is a weak equivalence if both $\epsilon$ and
$\epsilon'$ are weak equivalences. One checks easily that the
functors
\begin{align*}
s\colon \cat{Mor}(\cat{W}) \mto \cat{W},&\qquad
(A\xrightarrow{\alpha}
A')\mapsto A,\\
t\colon \cat{Mor}(\cat{W}) \mto \cat{W},&\qquad
(A\xrightarrow{\alpha}
A')\mapsto A',\\
S\colon \cat{W}\mto \cat{Mor}(\cat{W}),&\qquad A\mapsto
 (A\mto 0),\\
T\colon\cat{W}\mto \cat{Mor}(\cat{W}),&\qquad A\mapsto (0\mto A)
\end{align*}
are Waldhausen exact. Let $ar\colon s\mto t$ denote the natural
transformation given by $ar(\alpha)=\alpha$.

For two Waldhausen categories $\cat{W}_1$ and $\cat{W}_2$ we let
$$
\Fun(\cat{W}_1,\cat{W}_2)
$$
denote the \emph{Waldhausen category of exact functors} with
natural transformations as morphisms. A natural transformation
$\alpha\colon F\mto G$ is a cofibration if
\begin{enumerate}
\item for each object $C$ in $\cat{W}_1$, the morphism
$\alpha(C)\colon F(C)\mto G(C)$ is a cofibration,

\item for each cofibration $C\cto C'$ in $\cat{W}_1$, $G(C)\cup_{F(C)}F(C')\mto G(C')$ is a cofibration.
\end{enumerate}
A natural transformation $\alpha\colon F\mto G$ is a weak
equivalence if for each object $C$ in $\cat{W}_1$, the morphism
$\alpha(C)\colon F(C)\mto G(C)$ is a weak equivalence.

\begin{defna}\label{defn:cylinder functor}
A \emph{cylinder functor} for $\cat{W}$ is an exact functor
$$
\Cyl\colon \cat{Mor}(\cat{W})\mto \cat{W}
$$
together with natural transformations $j_1\colon s\mto \Cyl$,
$j_2\colon t\mto \Cyl$, $p\colon \Cyl \mto t$ such that
\begin{enumerate}
\item $p\comp j_1=ar$, $p\comp j_2=\id$,

\item $j_1\oplus j_2\colon s\oplus t\cto \Cyl$ is a cofibration in
$\Fun(\cat{Mor}(\cat{W}),\cat{W})$,

\item $\Cyl\comp T=\id$ and the compositions of $j_2$ with $T$ and
$p$ with $T$ are the identity transformation on $\id$.
\end{enumerate}
A cylinder functor satisfies the \emph{cylinder axiom} if
\begin{enumerate}
\item[(4)] $p\colon \Cyl\wto t$ is a weak equivalence in
$\Fun(\cat{Mor}(\cat{W}),\cat{W})$.
\end{enumerate}
\end{defna}

\begin{rema}
The above definition of a cylinder functor is clearly equivalent
to the one given in \cite{Wal:AlgKTheo}, Definition~1.6. Thomason
claims that it is also equivalent to the one given in
\cite{ThTr:HAKTS+DC}, Definition~1.3.1. However, it seems at least
not to be completely evident from the axioms stated there that
$\Cyl$ preserves pushouts along cofibrations.
\end{rema}

We further set
\begin{align*}
\Cone=\Cyl/s&\colon \cat{Mor}(\cat{W})\mto \cat{W},&(A\xrightarrow{\alpha} A')&\mapsto \Cyl(\alpha)/A,  \\
\Sigma=\Cone\comp S&\colon \cat{W}\mto \cat{W},&A&\mapsto
\Cone(A\mto 0).
\end{align*}
Note that $t\cto \Cone \qto \Sigma\comp s$ is an exact sequence in
$\Fun(\cat{Mor}(\cat{W}),\cat{W})$
for any cylinder functor $\Cyl$.

\begin{defna}\label{defn:stable quadratic module}
A \emph{stable quadratic module} $M_*$ is a homomorphism of groups $\del_M\colon M_1\mto M_0$ together with
a pairing
$$
\pair{-}{-}\colon M_0\times M_0\mto M_1
$$
satisfying the following identities for any $a,b\in M_1$ and
$X,Y,Z\in M_0$:
\begin{enumerate}
    \item $\pair{\del_M a}{\del_M b}=\com{b}{a},$
    \item $\del_M\pair{X}{Y}=\com{Y}{X},$
    \item $\pair{X}{Y}\pair{Y}{X}=1,$
    \item $\pair{X}{YZ}=\pair{X}{Y}\pair{X}{Z}.$
\end{enumerate}
\end{defna}

We set $a^X=a\pair{X}{\del a}$ for $a\in M_1$, $X\in M_0$. Note that this defines a right action of $M_0$ on $M_1$. Furthermore, we let
$$
\hg_1(M_*)=\ker \del_M,\qquad \hg_0(M_*)=\coker \del_M
$$
denote the \emph{homotopy groups} of $M_*$.

Assume that $f\colon M_*\mto N_*$ is any morphism
of stable quadratic modules such that $f_0$ is injective. Then $f_1(N_1)$
is a normal subgroup of $\del_N^{-1}(f_0(N_0))$. We set
$$
\hg_0(M_*,N_*)=\del_N^{-1}(f_0(N_0))/f_1(N_1)
$$
and obtain an exact sequence
$$
\hg_1(M_*)\mto
\hg_1(N_*)\mto\hg_0(M_*,N_*)\mto\hg_0(M_*)\mto\hg_0(N_*).
$$

Muro and Tonks give the following definition of the $1$-type of a Waldhausen category \cite[Definition~1.2]{MT:1TWKTS}.

\begin{defna}\label{defn:1-type}
Let $\cat{W}$ be a Waldhausen category. The \emph{algebraic 1-type}
$\D_*\cat{W}$ of $\cat{W}$ is the stable quadratic module generated by
\begin{itemize}
    \item[(G0)] the symbols $[X]$ for each object $X$ in $\cat{W}$ in degree $0$,
    \item[(G1)] the symbols $[w]$ and $[\Delta]$ for each weak
    equivalence $w$ and each exact sequence $\Delta$ in $\cat{W}$,
\end{itemize}
with $\del$ given by
\begin{itemize}
    \item[(R1)] $\del[\alpha]=[B]^{-1}[A]$ for $\alpha\colon A\wto
    B$,
    \item[(R2)] $\del[\Delta]=[B]^{-1}[C][A]$ for $\Delta\colon A\cto B\qto
    C$.
\end{itemize}
and
\begin{itemize}
    \item[(R3)] $\pair{[A]}{[B]}=[B\cto
    A\oplus B \qto A]^{-1}[A\cto A\oplus B\qto B]$ for any pair of
    objects $A, B$.
\end{itemize}
Moreover, we impose the following relations:
\begin{itemize}
    \item[(R4)] $[0\cto 0\qto 0]=1_{\D_1}$,
    \item[(R5)] $[\beta\alpha]=[\beta][\alpha]$ for $\alpha\colon A\wto
    B$, $\beta\colon B\wto C$,
    \item[(R6)] $[\Delta'][\alpha][\gamma]^{[A]}=[\beta][\Delta]$ for any commutative diagram
    $$
    \xymatrix{
    A\lbl{\Delta\colon}\ar@{ >->}[r]\ar[d]^{\sim}_{\alpha}&
    B\ar@{>>}[r]\ar[d]^{\sim}_{\beta}
    &C\ar[d]^{\sim}_{\gamma}\\
    A'\lbl{\Delta'\colon}\ar@{ >->}[r]&B'\ar@{>>}[r]&C'}
    $$
    \item[(R7)] $[\Gamma_1][\Delta_1]=[\Delta_2][\Gamma_2]^{[A]}$ for any
    commutative diagram
    $$
    \xymatrix{A\lbl{\Delta_1\colon}\ar@{ >->}[r]\ar@{=}[d]
    &B\lbu{\Gamma_1\colon}\ar@{>>}[r]\ar@{ >->}[d]
    &C\lbu{\Gamma_2\colon}\ar@{ >->}[d]\\
    A\lbl{\Delta_2\colon}\ar@{ >->}[r]\ar@{>>}[d]&D\ar@{>>}[r]\ar@{>>}[d]&E\ar@{>>}[d]\\
    {0}\ar@{ >->}[r]&F\ar@{=}[r]&F}
    $$
\end{itemize}
\end{defna}

Muro and Tonks then prove that
$$
\KTh_1(\cat{W})=\hg_1(\D_*(\cat{W})),\qquad \KTh_0(\cat{W})=\hg_0(\D_*(\cat{W})).
$$

The following theorem gives our explicit description of the connecting homomorphism.

\begin{thma}\label{thm:localisation thm}
Let $\cat{wW}$ be a Waldhausen category and $\cat{vW}$ the same
category with a coarser notion of weak equivalences. Assume that
$\cat{wW}$ is saturated and extensional and let $\Cyl$ be a
cylinder functor for both $\cat{wW}$ and $\cat{vW}$ which
satisfies the cylinder axiom for $\cat{wW}$. Then the assignment
\begin{align*}
d(\Delta)&=1 &&\text{for every exact sequence $\Delta$ in
$\cat{wW}$,}\\
d(\alpha)&=[\Cone(\alpha)]^{-1}[\Cone(\id_A)] &&\text{for every
weak equivalence $\alpha\colon A\mto A'$ in $\cat{wW}$}
\end{align*}
defines a homomorphism $d\colon \D_1(\cat{wW})\mto
K_0(\cat{vW^w})$ and the sequence
$$
\KTh_1(\cat{vW})\mto \KTh_1(\cat{wW})\xrightarrow{d}
\KTh_0(\cat{vW^w})\mto \KTh_0(\cat{vW})\mto \KTh_0(\cat{wW})\mto 0
$$
is exact.
\end{thma}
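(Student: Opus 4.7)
My strategy is to first verify that the assignment yields a well-defined homomorphism $d\colon\D_1(\cat{wW})\mto\KTh_0(\cat{vW^w})$, next to identify its restriction to $\KTh_1(\cat{wW})=\ker\del$ with the connecting map of Waldhausen's localisation theorem, and thereby conclude exactness from \cite[Theorem~1.8.2]{ThTr:HAKTS+DC}. A crucial preliminary is that $\Cone(\alpha)$ genuinely lies in $\cat{vW^w}$ whenever $\alpha\colon A\wto A'$ is a $\cat{w}$-equivalence: the cylinder axiom gives $p\colon\Cyl(\alpha)\wto A'$, and combined with the identity $p\comp j_1=\alpha$ and saturation of $\cat{wW}$, this forces $j_1\colon A\cto\Cyl(\alpha)$ to be a $\cat{w}$-equivalence; extensionality applied to the cofibre sequence $A\cto\Cyl(\alpha)\qto\Cone(\alpha)$ then yields $0\wto\Cone(\alpha)$.

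Since $\KTh_0(\cat{vW^w})$ is abelian, the pairing and commutator relations of the stable quadratic module are automatic once the generator relations (R4)--(R7) are established. Relation (R4) is immediate from $d([\Delta])=1$. For (R6), applying the exact functor $\Cone\colon\cat{Mor}(\cat{wW})\mto\cat{wW}$ to the ladder $\Delta\mto\Delta'$ produces an exact sequence $\Cone(\alpha)\cto\Cone(\beta)\qto\Cone(\gamma)$ of objects of $\cat{vW^w}$, giving $[\Cone(\beta)]=[\Cone(\alpha)]+[\Cone(\gamma)]$; applying $\Cone$ similarly to the identity ladder of $\Delta$ produces the analogue for $\Cone(\id_{(\cdot)})$, and subtracting yields (R6). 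Relation (R7) is handled analogously by applying $\Cone$ row by row to the given $3\times 3$ diagram. The most delicate identity is (R5), which amounts to $[\Cone(\beta\alpha)]+[\Cone(\id_B)]=[\Cone(\alpha)]+[\Cone(\beta)]$; I will derive this Waldhausen-style analogue of the octahedral axiom from the double mapping cylinder of $\alpha$ and $\beta$, extracting two short exact sequences of cones that relate all four terms.

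Once $d$ is defined on $\D_1(\cat{wW})$, identification with Waldhausen's connecting homomorphism on $\KTh_1(\cat{wW})$ reduces, after absorbing exact-sequence contributions (which both maps kill), to checking agreement on loops $[\alpha]$ arising from $\cat{w}$-self-equivalences $\alpha\colon A\wto A$; unwinding Waldhausen's boundary via the cofibre sequence of $S_{\bullet}$-spaces for the pair $(\cat{vW},\cat{wW})$ sends such a loop precisely to $[\Cone(\alpha)]^{-1}[\Cone(\id_A)]$, reproducing our formula. Exactness of the displayed six-term sequence is then a direct consequence of Waldhausen's localisation theorem. The main obstacle will be (R5): the double-cylinder construction has to be arranged so that each structure map is a Waldhausen cofibration and each cofibre lands in $\cat{vW^w}$, and one must bear in mind that the cylinder axiom is assumed only for $\cat{w}$, so all cones must first be manipulated inside $\cat{wW}$ and only promoted to classes in $\KTh_0(\cat{vW^w})$ after explicit verification of $\cat{w}$-acyclicity via extensionality.
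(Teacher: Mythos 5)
Your preliminary observation (that saturation, extensionality and the cylinder axiom for $\cat{w}$ force $\Cone(\alpha)$ into $\cat{vW^w}$), the remark that the pairing relations are vacuous because the target is abelian with trivial degree-zero part, and the treatment of (R4), (R6), (R7) all match the paper and are fine. The first genuine gap is (R5). You correctly identify the target identity $[\Cone(\beta\alpha)][\Cone(\id_B)]=[\Cone(\alpha)][\Cone(\beta)]$, but the double-mapping-cylinder route as sketched runs into exactly the difficulty you flag at the end and do not resolve: every comparison map the cylinder construction produces (for instance $\Cone(\beta\comp p_{\alpha})\mto\Cone(\beta)$ induced by the projection $p_{\alpha}\colon\Cyl(\alpha)\wto B$) is only a $\cat{w}$-equivalence, and $\cat{w}$-equivalent objects of $\cat{vW^w}$ need not have equal classes in $\KTh_0(\cat{vW^w})$ — the discrepancy between such classes is governed by precisely the multiplicativity you are trying to prove, so the naive argument is circular. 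The paper's device is a two-step reduction: first prove $d(\beta\comp\alpha)=d(\beta)d(\alpha)$ when $A,B,C$ already lie in $\cat{vW^w}$, where the exact sequences $B\cto\Cone(\alpha)\qto\Sigma A$ and $B\cto\Cone(\id_B)\qto\Sigma B$ show that $[\Cone(\alpha)]=[B][\Sigma A]$ depends only on source and target; then reduce the general case to this one by applying $t\cto\Cone\qto\Sigma\comp s$ to the chain $\id_A\mto\alpha\mto\beta\comp\alpha$ in $\cat{Mor}(\cat{wW})$, replacing $\alpha,\beta$ by $\alpha_*\colon\Cone(\id_A)\wto\Cone(\alpha)$ and $\beta_*\colon\Cone(\alpha)\wto\Cone(\beta\comp\alpha)$ and invoking the already-established ladder relation. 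You need some such mechanism; the two exact sequences you promise are not exhibited and it is not clear they exist in the form you need.

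The second gap is the exactness argument. You propose to identify $d|_{\KTh_1(\cat{wW})}$ with Waldhausen's connecting homomorphism by unwinding the $S_{\bullet}$-construction, and then to cite the localisation theorem. That identification is not a routine verification: it is essentially the content of Staffeldt's theorem, which the paper explicitly describes as the "more sophisticated" alternative, and your proposal asserts rather than proves that the topological boundary sends a self-equivalence loop to $[\Cone(\alpha)]^{-1}[\Cone(\id_A)]$ (nor does it justify reducing arbitrary elements of $\ker\del\subset\D_1(\cat{wW})$ to such loops). The paper avoids the homotopy-theoretic boundary altogether: it works inside the Muro--Tonks model, where the cokernel $K$ of $\D_1(\cat{vW})\mto\D_1(\cat{wW})$ fits by pure algebra into the five-term exact sequence with $\KTh_1(\cat{vW})$, $\KTh_1(\cat{wW})$, $\KTh_0(\cat{vW})$ and $\KTh_0(\cat{wW})$, checks that $d$ kills $\cat{v}$-equivalences so that it factors through $K$, and then proves $d\colon K\mto\KTh_0(\cat{vW^w})$ is an isomorphism by writing down the explicit inverse $H$ induced by $X\mapsto[X\wto 0]$. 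As written, your exactness step assumes the hardest part of the theorem instead of proving it.
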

\begin{proof}
We may view $\KTh_0(\cat{vW^w})$ as a stable quadratic module with trivial
group in degree zero and $\KTh_0(\cat{vW^w})$ in degree one. By
the universal property of $\D_*(\cat{wW})$ it suffices to verify
the following two assertions in order to show that the
homomorphism $d\colon \D_1(\cat{wW})\mto\KTh_0(\cat{vW^w})$ is
well-defined.
\begin{enumerate}
\item For commutative diagrams
$$
    \xymatrix{
    A\ar@{ >->}[r]\ar[d]^{\sim}_{\alpha}&
    B\ar@{>>}[r]\ar[d]^{\sim}_{\beta}
    &C\ar[d]^{\sim}_{\gamma}\\
    A'\ar@{ >->}[r]&B'\ar@{>>}[r]&C'}
$$
in $\cat{wW}$ we have $d(\beta)=d(\alpha)d(\gamma)$.
\item For weak equivalences $\alpha\colon A\wto
    B$, $\beta\colon B\wto C$ in $\cat{wW}$ we have $d(\beta\comp\alpha)=d(\beta)d(\alpha)$.
\end{enumerate}

Assertion $(1)$ follows easily by applying the exact functors
$\Cone$ and $\alpha\mapsto\Cone(\id_{s(\alpha)})$ to the exact sequence
$\alpha\cto\beta\qto\gamma$ in $\cat{Mor}(\cat{wW})$. We prove
Assertion $(2)$.

First, we consider a weak equivalence $\alpha\colon A\wto B$ in
$\cat{wW}$ between objects $A$ and $B$  in $\cat{vW^w}$. The exact
sequences
\begin{align*}
B\cto \Cone(\alpha)\qto \Sigma A,\\
B\cto \Cone(\id_B)\qto \Sigma B
\end{align*}
in $\cat{vW^w}$ imply
\begin{align*}
d(\alpha)d(B\wto 0)&=[\Cone(\alpha)]^{-1}[\Cone(\id_A)][\Sigma
B]^{-1}[\Cone(\id_B)]\\
&=[\Sigma A]^{-1}[\Cone(\id_A)]\\
&=d(A\wto 0).
\end{align*}
We obtain
$$
d(\beta\comp\alpha)=d(C\wto0)^{-1}d(A\wto 0)=d(\beta)d(\alpha)
$$
in the special case that $\alpha\colon A\wto B$ and $\beta\colon
B\wto C$ are weak equivalences in $\cat{wW}$ between objects $A$,
$B$, and $C$ in $\cat{vW^w}$.

Let now $\alpha\colon A\wto B$ and $\beta\colon B\wto C$ by
arbitrary weak equivalences in $\cat{wW}$. Viewing the vertical
morphisms in the commutative diagram
$$
\xymatrix{
A\ar@{=}[r]\ar@{=}[d]&A\ar@{=}[r]\ar[d]^{\sim}_{\alpha}&A\ar[d]^{\sim}_{\beta\comp\alpha}\\
A\ar[r]^{\sim}_{\alpha}&B\ar[r]^{\sim}_{\beta}         &C}
$$
as morphisms $\id_A\wto \alpha\wto \beta\comp\alpha$ in
$\cat{Mor}(\cat{wW})$ and applying the exact sequence $t\cto \Cone
\qto \Sigma s$ we obtain the following commutative diagram with
exact rows.
$$
\xymatrix{
    A\ar@{ >->}[r]\ar[d]^{\sim}_{\alpha}&
    \Cone(\id_A)\ar@{>>}[r]\ar[d]^{\sim}_{\alpha_*}
    &\Sigma A\ar@{=}[d]\\
    B\ar@{ >->}[r]\ar[d]^{\sim}_{\beta}&\Cone(\alpha)\ar@{>>}[r]\ar[d]^{\sim}_{\beta_*}&
    \Sigma A\ar@{=}[d]\\
    C\ar@{ >->}[r]&\Cone(\beta\comp\alpha)\ar@{>>}[r]&\Sigma A
    }
$$
Assertion $(1)$ and the previously proved special case of
Assertion $(2)$ imply
$$
d(\beta\comp\alpha)=d(\beta_*\comp\alpha_*)=d(\beta_*)d(\alpha_*)=d(\beta)d(\alpha).
$$

This completes the proof of Assertion $(2)$ in general. Hence, we
have also proved the existence of the homomorphism $d\colon
\D_1(\cat{wW})\mto \KTh_0(\cat{vW^w})$.

We will now prove the exactness of the sequence in the statement
of the theorem. Note that $\D_0(\cat{vW^w})$ injects into
$\D_0(\cat{vW})$ and that $\D_0(\cat{vW})=\D_0(\cat{wW})$.
Write $K=\hg_0(\D_*(\cat{vW}),\D_*(\cat{wW}))$, i.\,e.\ $K$ is
the cokernel of the natural homomorphism
$\D_1(\cat{vW})\mto\D_1(\cat{wW})$. As explained above, the
sequence of abelian groups
$$
\KTh_1(\cat{vW})\mto \KTh_1(\cat{wW})\mto K\mto
\KTh_0(\cat{vW})\mto \KTh_0(\cat{wW})\mto 0
$$
is exact.

Let $\alpha\colon A\wto B$ be a weak equivalence in $\cat{vW}$.
Since $\Cone\colon \cat{Mor}(\cat{vW})\mto\cat{vW}$ is an exact
functor, we see that the induced morphism $\alpha_*\colon
\Cone(\id_A)\wto \Cone(\alpha)$ is a weak equivalence in
$\cat{vW^w}$. Hence,
$$
d(\alpha)=d(\alpha_*)=1,
$$
i.\,e.\ the homomorphism $d$ factors through $K$. It remains to
show that $d\colon K\mto \KTh_0(\cat{vW^w})$ is an isomorphism.

Consider the homomorphism $h\colon \D_0(\cat{vW^w})\mto
\D_1(\cat{wW})$ induced by sending an object $X$ of $\cat{vW^w}$
to $[X\wto 0]$ in $\D_1(\cat{wW})$. One checks easily that
$h\comp\del_{\D_*(\cat{vW^w})}$ agrees with the natural
homomorphism $\D_1(\cat{vW^w})\mto\D_1(\cat{wW})$; hence, $h$
induces a homomorphism $H\colon \KTh_0(\cat{vW^w})\mto K$.

For any weak equivalence $\alpha\colon A\wto B$ in $\cat{wW}$ we
have
\begin{align*}
H(d(\alpha))&=[\Cone(\alpha)\wto 0]^{-1}[\Cone(\id_A)\wto 0]\\
&=[\Cone(\id_A)\xrightarrow{\alpha_*}\Cone(\alpha)]\\
&=[\alpha];
\end{align*}
for any object $X$ in $\cat{vW^w}$ we have
$$
d(H(X))=[\Sigma X]^{-1}[\Cone(\id_X)]=[X].
$$
Therefore, $d\colon K\mto \KTh_0(\cat{vW^w})$ is indeed an
isomorphism with inverse $H$.
\end{proof}

Note that if $\Cyl$ also satisfies the cylinder axiom in $\cat{vW}$, then $[\Cone(\id_A)]=1$ in $\KTh_0(\cat{vW^w})$ for every object $A$ in $\cat{wW}$. This will be the case in the most common situations.

Assume that $S$ is a left denominator set in a ring $R$ and let $\cat{vW}$ be the category of strictly perfect complexes of left $R$-modules with the class $\cat{v}$ of quasi-isomorphisms as weak equivalences. Let further $\cat{w}$ be the class of complex morphisms which become quasi-isomorphisms after localisation with respect to $S$. Then the usual cylinder functor $\Cyl$ satisfies the cylinder axiom in $\cat{vW}$ and $\cat{wW}$ and the resulting long exact localisation sequence identifies with the localisation sequence
\begin{align*}
\dotso&\mto \KTh_n(R,S^{-1}R)\mto \KTh_{n}(R)\mto
\KTh_{n}(S^{-1}R)\xrightarrow{d}
\KTh_{n-1}(R,S^{-1}R)\mto \dotso\\
\dotso&\mto \KTh_1(R)\mto \KTh_1(S^{-1}R)\xrightarrow{d}
\KTh_0(R,S^{-1}R)\mto \KTh_0(R)\mto \KTh_0(S^{-1}R)
\end{align*}
\cite{WeibYao:Localization}. It is then easy to see that the boundary homomorphism constructed in Theorem~\ref{thm:localisation thm} satifies the formula stated in \cite[p.~2]{WeibYao:Localization} and hence, agrees with the classical boundary homomorphism in this situation.

The above theorem can also be applied to derive the description of Weiss' generalised Whitehead torsion given in \cite[Remark~6.3]{Muro:Maltsiniotis}.

\bibliographystyle{amsalpha}
\bibliography{Literature}
\end{document}